\newtheorem{theorem}{Theorem}
\newtheorem{proposition}{Proposition}
\newtheorem{lemma}[proposition]{Lemma}
\theoremstyle{definition}
\theoremstyle{remark}
\newtheorem{remark}[proposition]{Remark}
\numberwithin{equation}{section}
\numberwithin{proposition}{section}
\newcommand\R{{\ensuremath {\mathbb R} }}
\newcommand\N{{\ensuremath {\mathbb N} }}
\newcommand\Z{{\ensuremath {\mathbb Z} }}
\renewcommand\phi{\varphi}
\renewcommand\le{\leqslant}
\renewcommand\ge{\geqslant}
\renewcommand\epsilon{\varepsilon}
\renewcommand\hat{\widehat}
\renewcommand\tilde{\widetilde}
\renewcommand\bar{\overline}
\newcommand{\cD}{\mathcal{D}}
\newcommand{\cH}{\mathcal{H}}
\newcommand\ii{{\ensuremath {\infty}}}
\newcommand{\cE}{\mathcal{E}}
\newcommand{\cC}{\mathcal{C}}
\newcommand{\cF}{\mathcal{F}}
\newcommand\1{{\ensuremath {\mathds 1} }}
\newcommand{\Sph}{\mathbb{S}}
\newcommand\cS{\mathcal{S}}
\DeclareMathOperator{\tr}{Tr}
\title{Compactness methods in Lieb's work}
\author{Julien Sabin}
\address{Julien Sabin, Centre de mathématiques Laurent Schwartz, UMR CNRS 7640, École polytechnique,91128 Palaiseau Cedex, France}
\email{julien.sabin@polytechnique.edu}
\begin{document}

\begin{abstract}
  We review some compactness methods appearing in the work of Lieb, with an emphasis on the techniques developed around his 1983 article on the optimizers for the Hardy-Littlewood-Sobolev inequality. 
\end{abstract}

\maketitle

\section{Introduction}

We survey several compactness methods appearing in Lieb's work. Such methods appear naturally when dealing with optimization problems: a natural way to prove the existence of optimizers is to show that optimizing sequences converge (perhaps up to a subsequence) by some compactness argument. The question of existence of optimizers appears in several works of Lieb: in chronological order, for the Young inequality \cite{BraLie-76}, in Choquard theory \cite{Lieb-77}, Hartree-Fock theory \cite{LieSim-77}, Thomas-Fermi theory \cite{LieSim-77b}, Thomas-Fermi-von Weizs\"acker theory \cite{BenBreLie-81}, for the Hardy-Littewood-Sobolev inequality \cite{Lieb-83b}, the Br\'ezis-Nirenberg problem \cite{BreNir-83}, for vector field equations \cite{BreLie-84}, in problems related to zero modes of magnetic fields \cite{FroLieLos-86}, for Gaussian kernel operators \cite{Lieb-90}, in non-relativistic QED \cite{GriLieLos-01,LieLos-03}, for Poincaré-type inequalities \cite{LieSeiYng-03}, for spherical Young's inequalities \cite{CarLieLos-04}, for the M\"uller functional \cite{FraLieSeiSie-07}, for the Hardy-Littlewood-Sobolev inequality on the Heisenberg group \cite{FraLie-12}, for the Pekar-Tomasevich functional at thereshold \cite{FraLieSei-12}, for negative ions at threshold \cite{BelFraLieSei-14}, for the liquid drop model \cite{FraLie-15}, for the Stein-Tomas inequality \cite{FraLieSab-16}, for Nash-type inequalities \cite{CarLie-19}. The previous list is probably not exhaustive, for instance we did not mention works related to spin systems where optimizers typically exist due the finite dimensionality of the problems. 

In this review, we focus on the techniques introduced by Lieb and his collaborators in a series of articles published around 1983, since they introduce new and important ideas that can be applicable to many problems. The key concept here is the loss (or lack) of compactness. As an illustration of this concept, let us briefly explain what were the compactness tools used by Lieb before 1983, to give a comparison point with the techniques we survey later on. The optimization problems we consider here are all of the form 
$$e_0:=\inf\Big\{ \cE(f)\ :\ f\in X_0\Big\}$$
where $X_0$ is a (non-empty) strongly closed subset of a (infinite-dimensional) Banach space $X$, and $\cE:X\to\R$ is a bounded-below, strongly continuous functional. To show that $e_0$ is attained (that is, there exists $f^*\in X_0$ with $\cE(f^*)=e_0$), it is natural to pick a minimizing sequence $(f_n)\subset X_0$ such that $\cE(f_n)\to e_0$ as $n\to\ii$, and the hope is that $f^*$ can be obtained as the limit of $(f_n)$ in some topology. The first obvious obstacle to this strategy is the fact that $X_0$ is in practice not compact for the strong topology of the (infinite dimensional space) $X$, so that a strongly converging subsequence of $(f_n)$ cannot be found in this way. The standard tool to bypass this difficulty is to use instead weak topologies on $X$ (either the weak-$*$ topology together with the Banach-Alaoglu theorem or the weak topology when $X$ is reflexive), for which sequential compactness is much easier to obtain. In all practical situations, one obtains in this way a subsequence of $(f_n)$ --that we still denote by $(f_n)$-- that converges weakly (or weakly-$*$) towards $f^*$ belonging to $\bar{X_0}^w$, the (sequentially) weak closure of $X_0$. To show that $f^*$ is an optimizer of the problem, it remains to show i) that $f^*\in X_0$ and ii) that $\cE(f^*)=\lim_{n\to\ii}\cE(f_n)$. The most favorable case where these properties hold is the case where $X_0$ is (sequentially) weakly closed, that is $\bar{X_0}^w=X_0$, and where $\cE$ is weakly lower semi-continuous (that is, for any sequence $(g_n)\subset X$ such that $g_n\rightharpoonup g$, one has $\liminf_{n\to\ii}\cE(g_n)\ge\cE(g)$); indeed, we then have $e_0=\lim_{n\to\ii}\cE(f_n)=\liminf_{n\to\ii}\cE(f_n)\ge\cE(f^*)\ge e_0$ since $f^*\in X_0$, so that $\cE(f^*)=e_0$ and $f^*$ is an optimizer for $e_0$. The first occurrence where loss of compactness occurs is the case where i) is not true; i.e. weak limits may leave the minimization set $X_0$. In this case, all hope is not necessarily lost since $(f_n)$ is not \emph{any} weakly converging sequence, it is a minimizing sequence for $e_0$. Hence, one may try to use this minimizing property to infer that $f^*\in X_0$. 

This is what happens for Thomas-Fermi \cite{LieSim-77b} or Hartree-Fock \cite{LieSim-77} theory. In the Thomas-Fermi problem (and in many other optimization problems), the minimization set $X_0$ is a sphere $X_0=\{f\in X\ :\ \|f\|_X=\lambda\}$ of radius $\lambda>0$ in the full space $X$, for which its weak closure is the ball of same radius $\bar{X_0}^w=\{f\in X\ :\ \|f\|_X\le\lambda\}$. In this kind of problems, showing that $f^*\in X_0$ means that a minimizing sequence cannot 'lose mass', i.e. that $\|f^*\|_X<\lambda$ is impossible. In the Hartree-Fock problem, the minimization set is not a sphere but rather the set of all rank-$N$ self-adjoint projections on $L^2(\R^3)$, for a fixed $N\in\N$. In this case, the weak closure is the set of all self-adjoint operators $\gamma$ with $0\le\gamma\le1$ and $\tr\gamma\le N$ and thus in this context, 'loss of mass' can be interpreted as a 'loss of particles' (since $\tr\gamma$ is interpreted as the number of particles that the state $\gamma$ describes). In both Thomas-Fermi and Hartree-Fock problems, the energy functional $\cE$ is weakly lower semi-continuous, so showing that $f^*\in X_0$ is enough to infer that $f^*$ is a minimizer. Actually, one can show that for $\lambda$ and $N$ not too big, 'loss of mass' does not happen and one indeed has $f^*\in X_0$. The proof depends quite strongly of the specific features of these problems, so that we will not provide it here. In the Thomas-Fermi problem for instance, it relies on the fact that for $\lambda$ not too big, the minimum $e_0$ is strictly decreasing as a function of the parameter $\lambda$, so that it is energetically unfavorable to loss mass. 

The situation is different for the Choquard problem \cite{Lieb-77}, where the minimization set is the ball $X_0=\{f\in H^1(\R^3)\ :\ \|f\|_{L^2}\le\lambda\}$ for some fixed $\lambda>0$, which is weakly-closed and hence minimizing sequence cannot leave the minimization set. What changes here is that the energy functional is \emph{not} weakly lower semi-continuous, due to the \emph{translation-invariance} of the problem. Indeed, for any $y\in\R^3$ and any $f\in X_0$, one can define the translate $\tau_y f:=f(\cdot-y)$ which clearly satisfies that $\tau_y f \in X_0$. Furthermore, the functional $\cE$ has the invariance $\cE(\tau_y f)=\cE(f)$ for any $f\in X_0$ and any $y\in\R^3$. Now notice that for any $(y_n)\subset\R^3$ such that $|y_n|\to\ii$ as $n\to\ii$, one has $f_n:=\tau_{y_n}f\rightharpoonup0$ weakly in $H^1(\R^3)$. Hence, if one chooses $f$ such that $\cE(f)<0$ (which in this case is always possible due to the explicit form of $\cE$), we have $\liminf_{n\to\ii}\cE(f_n)=\cE(f)<0=\cE(0)$ with $f_n\rightharpoonup0$, proving that $\cE$ is not weakly lower semi-continuous (so that point ii) above fails). Another point of view on this loss of compactness is that it is hopeless to try to obtain a minimizer as a weak limit of any minimizing sequence; because if it worked and one obtained a minimizer $f^*\in X_0$ in this way, then $(\tau_{y_n}f^*)_n$ would be a minimizing sequence whose weak limit (which is zero) is obviously not a minimizer. If one wants to obtain a minimizer as a weak limit of minimizing sequence, one thus cannot consider any minimizing sequence; one should look at a subclass of minimizing sequences for which translation-invariance is 'broken' and in such a way that lower semi-continuity is restored for this type of sequences. Lieb did so by showing that one can look for minimizers among the subclass of functions in $X_0$ which are \emph{radially symmetric decreasing}, for which he proved that the energy was lower semi-continuous and hence its weak limits are minimizers for the Choquard problem.

The starting point of this review is to consider minimization problems where both the issues presented above are present: the minimization set is not weakly closed and the problem has some 'non-compact' invariance. In the context of Lieb's work, the first such problem where this kind of phenomenon appeared was the existence of optimizers for the Hardy-Littlewood-Sobolev inequality, 
$$\sup\Big\{ \|f*|\cdot|^{-\lambda}\|_{L^q}\ :\ f\in L^p(\R^d),\ \|f\|_{L^p}=1\Big\}$$
with $d\ge1$, $\lambda\in(0,d)$, $1<p,q<+\ii$ such that $1/p+\lambda/d=1+1/q$. The maximization set is here a sphere (which is again not weakly-closed), and the energy functional is invariant under translations and dilations.  In the seminal work \cite{Lieb-83b}, Lieb understood how to consider minimizing sequences such that these invariances have been broken, and that do not lose mass and hence converge towards maximizers. This approach was applied in the next years to several other optimization problems \cite{BreNir-83,BreLie-84,FroLieLos-86}. The goal of this review is to explain these techniques and how they can be apply to several optimization problems. We emphasize that our focus is on the works of Lieb on these questions and that many other authors contributed to this theory, before and after the articles that we survey. For instance, Lions developed in the same period his technique (which shares several common features with Lieb's approach) of concentration-compactness \cite{Lions-84,Lions-84b,Lions-85,Lions-85b}, bringing even more tools to tackle such problems. To keep this review at a reasonable size, we will not try to comment on these works and to compare them to the results we detail here. 

In Section \ref{sec:tools}, we present the specific tools developed by Lieb and his collaborators on the question of loss of compactness. In Section \ref{sec:app-exact-sym}, we explain how these tools were applied to solve various optimization problems, including some other well-known optimization problems that we can treat in the same way. In Section \ref{sec:app-approx-symp}, we apply the same methods but to slightly different problems with a non-compact 'almost' invariance. In Appendix \ref{app:profile}, we explain how these techniques can be extended to understand profile decompositions.

\medskip

\noindent\textbf{Acknowledgements:} The author is indebted to Mathieu Lewin and Rupert Frank for introducing him to these techniques, for many discussions about them, and for comments on this text. He is of course also very thankful to Elliott Lieb for all the inspiring ideas and works that are presented here. 

\section{Optimization toolbox}\label{sec:tools}

We choose to start by highlighting the various tools that Lieb and his collaborators developed in the context of compactness problems. More precisely, we present an abstract result on convergence of optimizing sequences for functional inequalities \cite[Lemma 2.7]{Lieb-83b}, which itself relies on a refinement of Fatou's lemma called the Br\'ezis-Lieb lemma \cite{BreLie-83}. We also present a very useful result about the existence of non-zero weak limits in Sobolev spaces \cite[Lemma 6]{Lieb-83}, which is often combined with the $pqr$ lemma \cite[Lemma 2.1]{FroLieLos-86} (see also \cite[Lemma 2.1]{BreLie-84}). 

As we already mentioned, the Hardy-Littewood-Sobolev was chronologically the first optimization problem to which these methods were applied. We will see that many functional inequalities can be treated with the same strategies. The following result provides a fairly general setting in which some maximizing sequences converge to maximizers in the context of sharp constants for functional inequalities.

\begin{proposition}\label{prop:MMM-pq}{\cite[Lemma 2.7]{Lieb-83b}}
 Let $0<p<q$ and $X$, $Y$ be measure spaces. Assume that $A$ is a non-zero bounded linear operator from $L^p(X)$ to $L^q(Y)$ and denote 
 $$\cS:=\sup\{\|Af\|_{L^q(Y)}^q\ |\ f\in L^p(X),\ \|f\|_{L^p(X)}=1\}\in(0,+\ii).$$
 Assume that there exist $(f_n)\subset L^p(X)$ and $f\in L^p(X)$ such that 
 \begin{enumerate}
  \item $\|f_n\|_{L^p}=1$ for all $n$;
  \item $\|Af_n\|_{L^q}^q\to\cS$;
  \item $f_n\to f$ almost everywhere;
  \item $f\neq0$;
  \item $Af_n\to Af$ almost everywhere.
 \end{enumerate}
 Then, $(f_n)$ converges strongly in $L^p(X)$ to $f$ and $\|Af\|_{L^q}^q=\cS$.
\end{proposition}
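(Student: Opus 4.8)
The plan is to use the Br\'ezis-Lieb lemma twice in parallel --- once in $L^p(X)$ for the sequence $(f_n)$, and once in $L^q(Y)$ for the sequence $(Af_n)$ --- and then to close the argument with the elementary strict subadditivity of $t\mapsto t^{q/p}$ on $[0,1]$, which is exactly where the hypothesis $p<q$ enters.

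First I would set $\lambda:=\|f\|_{L^p}^p$, noting that $\lambda\in(0,1]$: positivity is hypothesis (4), and $\lambda\le 1$ follows from Fatou's lemma applied to hypothesis (3) together with $\|f_n\|_{L^p}=1$. Since $(f_n)$ is bounded in $L^p$ and $f_n\to f$ a.e., the Br\'ezis-Lieb lemma gives $\|f_n\|_{L^p}^p-\|f_n-f\|_{L^p}^p\to\|f\|_{L^p}^p$, i.e.\ $\|f_n-f\|_{L^p}^p\to 1-\lambda$. Likewise $(Af_n)$ is bounded in $L^q$ by hypothesis (2), and $Af_n\to Af$ a.e.\ by hypothesis (5), so the Br\'ezis-Lieb lemma in $L^q$, combined with the linearity identity $Af_n-Af=A(f_n-f)$, yields $\|A(f_n-f)\|_{L^q}^q\to\cS-\|Af\|_{L^q}^q$. (Hypothesis (5) is genuinely needed here: $A$ need not be a local operator, so a.e.\ convergence of $Af_n$ does not follow from that of $f_n$.)

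Next I would feed in the variational bound. By homogeneity of $A$, one has $\|Ag\|_{L^q}^q\le\cS\,\|g\|_{L^p}^q$ for every $g\in L^p(X)$. Applying this with $g=f$ gives $\|Af\|_{L^q}^q\le\cS\,\lambda^{q/p}$, and applying it with $g=f_n-f$ and letting $n\to\ii$ (using $\|f_n-f\|_{L^p}^q=(\|f_n-f\|_{L^p}^p)^{q/p}\to(1-\lambda)^{q/p}$) gives $\cS-\|Af\|_{L^q}^q\le\cS\,(1-\lambda)^{q/p}$. Adding the two inequalities yields $\cS\le\cS\big(\lambda^{q/p}+(1-\lambda)^{q/p}\big)$, hence $\lambda^{q/p}+(1-\lambda)^{q/p}\ge 1$ since $\cS>0$. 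But $q/p>1$ forces $t^{q/p}\le t$ on $[0,1]$, with equality only for $t\in\{0,1\}$, so $\lambda^{q/p}+(1-\lambda)^{q/p}\le\lambda+(1-\lambda)=1$ with equality only if $\lambda\in\{0,1\}$. Therefore $\lambda\in\{0,1\}$, and since $\lambda>0$ we conclude $\lambda=1$, i.e.\ $\|f\|_{L^p}=1$.

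From $\lambda=1$ it follows that $\|f_n-f\|_{L^p}^p\to 1-\lambda=0$, which is the claimed strong convergence in $L^p(X)$; finally, boundedness of $A$ gives $\|A(f_n-f)\|_{L^q}\le C\,\|f_n-f\|_{L^p}\to 0$ for some constant $C$, and comparing with the limit $\|A(f_n-f)\|_{L^q}^q\to\cS-\|Af\|_{L^q}^q$ from the second step gives $\|Af\|_{L^q}^q=\cS$. I do not expect a serious obstacle once the Br\'ezis-Lieb lemma is in hand; the only points demanding a little care are verifying that the Br\'ezis-Lieb lemma is applied correctly in the quasi-normed regime $0<p<1$ as well (where $\|\cdot\|_{L^p}^p$ still defines a complete metric and the lemma remains valid), and being sure that the two inequalities combined in the third step are the \emph{right} ones, so that the strict subadditivity genuinely forces the dichotomy $\lambda\in\{0,1\}$ rather than merely an inconclusive bound.
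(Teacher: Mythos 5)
Your proposal is correct and follows essentially the same route as the paper: apply the Br\'ezis--Lieb lemma to $(f_n)$ in $L^p$ and to $(Af_n)$ in $L^q$, feed in the variational bound $\|Ag\|_{L^q}^q\le\cS\|g\|_{L^p}^q$, and close with the strict superadditivity of $t\mapsto t^{q/p}$ for $q/p>1$. The only difference is cosmetic --- you apply the variational bound to $f$ and to $f_n-f$ separately and add, whereas the paper inserts it inline into the Br\'ezis--Lieb expansion --- and your remark that the lemma persists in the quasi-normed range $0<p<1$ is a correct and worthwhile observation that the paper leaves implicit.
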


The interpretation of Proposition \ref{prop:MMM-pq} is that if one can find a maximizing sequence for $\cS$ for which conditions (3), (4), (5) are satisfied, then it automatically converges towards a maximizer for $\cS$. Let is comment on these conditions. Condition (4) is natural from the considerations explained in the introduction, in the sense that in the context of non-compact invariances, it is easy to construct potential maximizing sequences which converge to zero. Hence, the assumption (4) is a manifestation that such potential invariances are 'broken'. Here, notice that the minimization set is a sphere so that its weak closure is the closed ball of same radius. As we explained in the introduction, it is natural to try to prove that minimizing sequences cannot leave the minimization set by showing that their weak limits cannot belong to the open ball of same radius. In this respect, condition (4) may look incomplete because while it forbids the limit to be zero, it does not a priori forbid that it belongs to an intermediate sphere of radius strictly between $0$ and $1$. Such a scenario may again happen in the presence of non-compact invariances, where a potential maximizing sequence is composed of a positive piece of mass which remains in the limit and of another positive piece of mass escaping along the non-compact invariances. To show that maximizing sequences cannot exhibit this mass-splitting phenomenon, conditions (3) and (5) intervene: we will see below that, together with the Br\'ezis-Lieb lemma, they exactly show that mass splitting is energetically unfavorable. 

\begin{remark}
The assumption $p<q$ in Proposition \ref{prop:MMM-pq} is natural when for instance $A$ is translation-invariant \cite[Theorem 1.1]{Hormander-60}.
\end{remark}

Before proving Proposition \ref{prop:MMM-pq}, let us state the important result on which it relies. 

\begin{lemma}[Br\'ezis-Lieb lemma \cite{BreLie-83,Lieb-83b}]\label{lem:bre-lie}
 Let $r>0$ and $Z$ be a measure space. Assume that $(g_n)\subset L^r(Z)$ is bounded and converges almost everywhere to $g\in L^r(Z)$. Then, we have 
 $$\int_Z |g_n|^r = \int_Z|g|^r + \int_Z|g_n-g|^r +o_{n\to+\ii}(1).$$ 
\end{lemma}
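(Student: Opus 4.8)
The plan is to exploit the pointwise algebraic inequality that controls the defect $\bigl||a+b|^r-|a|^r-|b|^r\bigr|$ in terms of a small multiple of $|a|^r$ plus a large multiple of $|b|^r$, and then to use dominated convergence on the ``small'' part. Concretely, first I would establish that for every $\epsilon>0$ there is a constant $C_\epsilon>0$ such that for all complex numbers $a,b$,
$$\bigl|\,|a+b|^r-|b|^r\,\bigr|\le \epsilon\,|b|^r + C_\epsilon\,|a|^r.$$
For $r\ge 1$ this follows from the mean value theorem applied to $t\mapsto|t|^r$ together with Young's inequality; for $0<r<1$ it follows from the elementary subadditivity $|x+y|^r\le|x|^r+|y|^r$. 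Applying this with $a=g_n-g$ and $b=g$ (so $a+b=g_n$), we get
$$\bigl|\,|g_n|^r-|g_n-g|^r-|g|^r\,\bigr|\le \bigl|\,|g_n|^r-|g_n-g|^r-|g|^r\,\bigr|,$$
and more usefully, writing $W_{n,\epsilon}:=\bigl(\bigl|\,|g_n|^r-|g_n-g|^r-|g|^r\,\bigr|-\epsilon\,|g_n-g|^r\bigr)_+$, the inequality above yields the pointwise bound $W_{n,\epsilon}\le (1+C_\epsilon)\,|g|^r\in L^1(Z)$.

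Next I would show $W_{n,\epsilon}\to 0$ almost everywhere as $n\to\ii$. Indeed, $g_n\to g$ a.e., so $g_n-g\to 0$ a.e., hence $|g_n|^r-|g_n-g|^r-|g|^r\to 0$ a.e. and the $\epsilon|g_n-g|^r$ term also tends to $0$ a.e.; therefore $W_{n,\epsilon}\to 0$ a.e. Since $0\le W_{n,\epsilon}\le(1+C_\epsilon)|g|^r$ and the dominating function is integrable, dominated convergence gives $\int_Z W_{n,\epsilon}\to 0$. Now by definition of $W_{n,\epsilon}$,
$$\bigl|\,|g_n|^r-|g_n-g|^r-|g|^r\,\bigr|\le W_{n,\epsilon}+\epsilon\,|g_n-g|^r,$$
so integrating and using that $(g_n-g)$ is bounded in $L^r$ — which follows from the boundedness of $(g_n)$ together with $g\in L^r$, giving $\sup_n\|g_n-g\|_{L^r}^r=:M<\ii$ (here I would invoke Fatou to see $g\in L^r$ if that is not taken as given, though the statement already assumes it) — we obtain
$$\limsup_{n\to\ii}\int_Z\bigl|\,|g_n|^r-|g_n-g|^r-|g|^r\,\bigr|\le \epsilon\,M.$$
Letting $\epsilon\downarrow 0$ forces the $\limsup$ to be $0$, which is exactly the claimed identity $\int_Z|g_n|^r=\int_Z|g|^r+\int_Z|g_n-g|^r+o(1)$.

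The main obstacle, and the only place requiring genuine care, is the pointwise inequality bounding $\bigl|\,|a+b|^r-|b|^r\,\bigr|$ by $\epsilon|b|^r+C_\epsilon|a|^r$ uniformly, with the right dependence on $\epsilon$; one must treat the regimes $|a|\le\delta|b|$ and $|a|>\delta|b|$ separately (or split at $r\ge1$ versus $r<1$) and check the constant $C_\epsilon$ stays finite for each fixed $\epsilon$. Everything after that is a clean application of dominated convergence plus the uniform $L^r$-bound on $g_n-g$, and the passage $\epsilon\to0$ at the end. Note that no use of weak convergence or reflexivity is needed — almost-everywhere convergence plus the $L^r$-bound is the whole input, which is precisely what makes the lemma so flexible in the applications to Proposition \ref{prop:MMM-pq}.
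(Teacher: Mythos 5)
Your proof is correct and is precisely the argument of Brézis and Lieb in \cite{BreLie-83}; the paper itself states this lemma and cites that reference without reproducing the proof. One small slip to fix: with the pointwise inequality as you have written it, $\bigl|\,|a+b|^r-|b|^r\,\bigr|\le\epsilon|b|^r+C_\epsilon|a|^r$, the substitution $a=g_n-g$, $b=g$ controls $\bigl|\,|g_n|^r-|g|^r\,\bigr|$ rather than $\bigl|\,|g_n|^r-|g_n-g|^r\,\bigr|$, which is what is needed (together with the triangle inequality $\bigl|\,|g_n|^r-|g_n-g|^r-|g|^r\,\bigr|\le\bigl|\,|g_n|^r-|g_n-g|^r\,\bigr|+|g|^r$) to get the pointwise bound on $W_{n,\epsilon}$; take instead $a=g$ and $b=g_n-g$, or equivalently state the inequality with $|a|^r$ subtracted, after which the tautological displayed line disappears. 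With that adjustment the rest --- dominated convergence for $\int_Z W_{n,\epsilon}\to0$, the uniform $L^r$-bound on $(g_n-g)$, and the passage $\epsilon\downarrow0$ --- is exactly right.
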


The Br\'ezis-Lieb lemma is a refinement of Fatou's lemma, since it quantifies how non-negative is the quantity $\liminf_{n\to\ii}\int_Z |g_n|^r-\int_Z|g|^r$. Notice that the assumption of the Br\'ezis-Lieb lemma is almost everywhere convergence, which motivates the assumptions (3) and (5) in Proposition \ref{prop:MMM-pq}. Notice also that a Br\'ezis-Lieb lemma where almost everywhere convergence is replaced by weak convergence holds only for $r=2$ (see \cite[Eq. (4.23)]{KilVis-book}), showing the importance of a.e. convergence for this kind of problems.

We now explain why the Br\'ezis-Lieb lemma implies Proposition \ref{prop:MMM-pq}. We provide the proof because its idea can be adapted to various situations besides the one presented here, and because the role of each assumption is made more explicit. 

\begin{proof}[Proof of Proposition \ref{prop:MMM-pq}]
Applying Lemma \ref{lem:bre-lie} to $(f_n)$ implies that 
$$\lim_{n\to+\ii}\|f_n-f\|_{L^p}^p = 1-\|f\|_{L^p}^p.$$
Now applying again Lemma \ref{lem:bre-lie} but now to $(Af_n)$ we deduce that 
\begin{align*}
 \cS + o(1) &= \|Af_n\|_{L^q}^q \\
 &= \|Af\|_{L^q}^q + \|A(f_n-f)\|_{L^q}^q+o(1) \\
 &\le \cS\big( \|f\|_{L^p}^q + \|f_n-f\|_{L^p}^q \big) + o(1).
\end{align*}
In the limit $n\to+\ii$ and using $\cS>0$ we obtain 
$$1\le \|f\|_{L^p}^q + \big(1-\|f\|_{L^p}^p\big)^{q/p}.$$
Since $0<p<q$, we have that for any $a,b\ge0$, $(a+b)^{q/p}\le a+b$ with equality if and only if $a=0$ or $b=0$. Applying this fact to $a=\|f\|_{L^p}^p$ and $b=1-\|f\|_{L^p}^p$ (which is non-negative by Fatou's lemma), we obtain
$$1\le \|f\|_{L^p}^q + \big(1-\|f\|_{L^p}^p\big)^{q/p}\le 1,$$
so that we are in the equality case $\|f\|_{L^p}^p=0$ or $1-\|f\|_{L^p}^p=0$. Since $f\neq0$, we deduce that $\|f\|_{L^p}=1$, and hence $1-\|f\|_{L^p}^p=\lim_{n\to+\ii}\|f_n-f\|_{L^p}^p=0$, showing that $f_n\to f$ strongly. Using the continuity of $A$, this implies that $Af_n\to Af$ strongly in $L^q$, and hence $\cS=\lim_{n\to+\ii}\|Af_n\|_{L^q}^q=\|Af\|_{L^q}^q$.
\end{proof}

\begin{remark}
 In \cite{Lieb-83b}, the last argument proving that $f_n\to f$ in $L^p(X)$ is attributed to Br\'ezis and in \cite{BreNir-83} the same argument adapted to the Hilbert space setting described below is attibuted to Browder.
\end{remark}

\begin{remark}
 The above proof can be adapted to the case $q=p$ to deduce that $f$ is a maximizer for $\cS$, without knowing that $f_n\to f$ strongly in $L^p$. This is done using only the bound $\|A(f_n-f)\|_{L^q}^q\le\cS\|f_n-f\|_{L^q}^q$, so that 
 $$\cS+o(1)\le \|Af\|_{L^q}^q+\cS(1-\|f\|_{L^q}^q)+o(1)$$
 and hence $\|Af\|_{L^q}^q\ge\cS\|f\|_{L^q}^q$.
\end{remark}

In Proposition \ref{prop:MMM-pq}, the assumptions (1) and (2) are automatic for maximizing sequences for $\cS$. Assumptions (3) to (5) are less obvious to obtain in practice. In particular, assumption (3) does not follow from the boundedness of $(f_n)$ in $L^p$ and one needs specific properties of $A$ to obtain it. In the case $p=2$, one can get rid of assumption (3) by the following straightforward adaptation of Proposition \ref{prop:MMM-pq}, which appears explicitly in \cite[Prop. 1.1]{FanVegVis-11}.

\begin{proposition}\label{prop:MMM-2q}
 Let $q>2$ and $X$, $Y$ be measure spaces. Assume that $A$ is a non-zero bounded linear operator from $L^2(X)$ to $L^q(Y)$ and denote 
 $$\cS:=\sup\{\|Af\|_{L^q(Y)}^q\ |\ f\in L^2(X),\ \|f\|_{L^2(X)}=1\}\in(0,+\ii).$$
 Assume that there exist $(f_n)\subset L^2(X)$ and $f\in L^2(X)$ such that 
 \begin{enumerate}
  \item $\|f_n\|_{L^2}=1$ for all $n$;
  \item $\|Af_n\|_{L^q}^q\to\cS$;
  \item $f_n\rightharpoonup f$ in $L^2(X)$;
  \item $f\neq0$;
  \item $Af_n\to Af$ almost everywhere.
 \end{enumerate}
 Then, $(f_n)$ converges strongly in $L^2(X)$ to $f$ and $\|Af\|_{L^q}^q=\cS$.
\end{proposition}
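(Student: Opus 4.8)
The plan is to transcribe the proof of Proposition \ref{prop:MMM-pq} almost verbatim, the single modification being that the first application of the Br\'ezis--Lieb lemma (the one to the sequence $(f_n)$ itself) is replaced by the elementary Hilbert-space expansion of the norm; this is precisely the place where the hypothesis $p=2$ enters, and it is why assumption (3) can be weakened to weak convergence while assumption (5) cannot. First I would record that, since $f_n\rightharpoonup f$ in $L^2(X)$, we may write
$$\|f_n\|_{L^2}^2 = \|f\|_{L^2}^2 + \|f_n-f\|_{L^2}^2 + 2\re\pscal{f_n-f,f},$$
and that the cross term tends to $0$ because $f_n-f\rightharpoonup 0$; combined with assumption (1) this gives $\lim_{n\to\ii}\|f_n-f\|_{L^2}^2 = 1-\|f\|_{L^2}^2$ (in particular $\|f\|_{L^2}\le 1$). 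This replaces the corresponding line in the proof of Proposition \ref{prop:MMM-pq}. Note that here one cannot instead invoke Lemma \ref{lem:bre-lie}, since that lemma requires a.e.\ convergence and its weak-convergence analogue holds only for the exponent $2$ (cf.\ the remark following Lemma \ref{lem:bre-lie}); this is exactly why assumption (5) is kept as an a.e.\ statement.

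Next, since $(f_n)$ is bounded in $L^2(X)$ and $A$ is bounded, $(Af_n)$ is bounded in $L^q(Y)$, so assumption (5) lets us apply Lemma \ref{lem:bre-lie} to $(Af_n)$, yielding $\|Af_n\|_{L^q}^q = \|Af\|_{L^q}^q + \|A(f_n-f)\|_{L^q}^q + o(1)$. Using the homogeneous bound $\|Ag\|_{L^q}^q\le \cS\,\|g\|_{L^2}^q$ for every $g\in L^2(X)$ (immediate from the definition of $\cS$) together with assumption (2), I obtain
$$\cS + o(1) \le \cS\big(\|f\|_{L^2}^q + \|f_n-f\|_{L^2}^q\big) + o(1).$$
Letting $n\to\ii$, dividing by $\cS>0$, and substituting the identity from the first paragraph gives $1\le \|f\|_{L^2}^q + \big(1-\|f\|_{L^2}^2\big)^{q/2}$.

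Finally I would conclude as in Proposition \ref{prop:MMM-pq}: setting $t:=\|f\|_{L^2}^2\in[0,1]$ and using $q/2>1$, the inequality $s^{q/2}\le s$ on $[0,1]$ gives $t^{q/2}+(1-t)^{q/2}\le 1$ with equality only when $t\in\{0,1\}$; combined with the bound above this forces $t\in\{0,1\}$, and since $f\neq 0$ we get $\|f\|_{L^2}=1$. Then $\|f_n-f\|_{L^2}^2\to 1-\|f\|_{L^2}^2 = 0$, so $f_n\to f$ strongly in $L^2(X)$, and by continuity of $A$, $Af_n\to Af$ strongly in $L^q(Y)$, whence $\cS=\lim_{n\to\ii}\|Af_n\|_{L^q}^q=\|Af\|_{L^q}^q$. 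There is no real obstacle in this argument: the only new observation is that $p=2$ makes weak convergence sufficient for the first norm decomposition, and the rest is a direct rerun of the earlier proof.
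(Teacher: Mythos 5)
Your proof is correct and matches the paper's argument exactly: the paper likewise notes that the only change from Proposition \ref{prop:MMM-pq} is to replace the first application of the Br\'ezis--Lieb lemma by the Hilbert-space identity $\lim_{n\to\ii}\|f_n-f\|_{L^2}^2=1-\|f\|_{L^2}^2$ coming from weak convergence, and then to rerun the earlier proof.
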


One can see the advantage of Proposition \ref{prop:MMM-2q} compared to \ref{prop:MMM-pq}: the assumption (3) is now a consequence of the boundedness of $(f_n)$ in $L^2$, using the weak compactness of closed balls in $L^2$. In the context of Proposition \ref{prop:MMM-2q}, only assumptions (4) and (5) have to be checked in practice.

\begin{proof}[Proof of Proposition \ref{prop:MMM-2q}]
 The proof is identical to the one of Proposition \ref{prop:MMM-pq}, replacing the application of the Br\'ezis-Lieb lemma to $(f_n)$ by the following consequence of the weak convergence of $(f_n)$ to $f$:
 $$\lim_{n\to+\ii}\|f_n-f\|_{L^2}^2 = 1-\|f\|_{L^2}^2.$$
\end{proof}

\begin{remark}
 As is clear from the proof, Proposition \ref{prop:MMM-2q} can be extended straightforwardly to the case where $L^2(X)$ is replaced by any separable Hilbert space $\cH$, as in \cite{FanVegVis-11}.
\end{remark}

The next tool we present is related to verifying the key assumption (4) in Proposition \ref{prop:MMM-2q}, namely the existence of maximizing sequences which have a non-zero weak limit. As we already mentioned, this fact is particularly relevant in problems with a non-compact invariance where optimizing sequences with zero weak limit are bound to exist (just by applying the symmetries to an optimizer). The following result, which appears for the first time in \cite[Lemma 6]{Lieb-83}, is a very useful tool to obtain non-zero weak limits for bounded sequences in $W^{1,p}(\R^d)$. 

\begin{proposition}\label{prop:non-zero-weak-W1p}
 Let $d\ge1$ and $1<p<+\ii$. Assume that $(f_n)\subset W^{1,p}(\R^d)$ is a bounded sequence such that there exist $\delta,\epsilon>0$ such that for all $n$, $|\{|f_n|>\epsilon\}|\ge\delta$. Then, there exists $(x_n)\subset\R^d$ such that $(f_n(\cdot-x_n))$ has a non-zero weak limit in $W^{1,p}(\R^d)$.
\end{proposition}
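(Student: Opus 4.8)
The plan is to reduce the statement to a \emph{local} compactness argument, using a tiling of $\R^d$ together with the Sobolev embedding on a fixed cube to prevent the mass of $f_n$ from spreading out to infinity. Tile $\R^d$ by the unit cubes $\{Q_j\}_{j\in\Z^d}$ with pairwise disjoint interiors, and use the norm $\|g\|_{W^{1,p}(\Omega)}^p:=\|g\|_{L^p(\Omega)}^p+\|\nabla g\|_{L^p(\Omega)}^p$, which is additive over the tiling: $\sum_j\|g\|_{W^{1,p}(Q_j)}^p=\|g\|_{W^{1,p}(\R^d)}^p$. Fix an exponent $q$ with $p<q<p^*:=\frac{dp}{d-p}$ if $p<d$, and any $q$ with $p<q<+\ii$ if $p\ge d$; the Sobolev inequality on a unit cube then gives a constant $C=C(d,p,q)$, the same for every $Q_j$ by translation invariance, with $\|u\|_{L^q(Q_j)}\le C\|u\|_{W^{1,p}(Q_j)}$.

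The key step is to extract from the hypothesis a uniform lower bound on the $L^q$-mass of $f_n$ inside a \emph{single} cube. On the one hand $\|f_n\|_{L^q(\R^d)}^q\ge\int_{\{|f_n|>\epsilon\}}|f_n|^q\ge\epsilon^q\delta$. On the other hand, writing $|f_n|^q=|f_n|^{q-p}\,|f_n|^p$ on each $Q_j$ and applying the local Sobolev bound,
$$\epsilon^q\delta\le\sum_j\|f_n\|_{L^q(Q_j)}^q\le\Big(\sup_k\|f_n\|_{L^q(Q_k)}\Big)^{q-p}\sum_j\|f_n\|_{L^q(Q_j)}^p\le\Big(\sup_k\|f_n\|_{L^q(Q_k)}\Big)^{q-p}C^p\,\|f_n\|_{W^{1,p}(\R^d)}^p.$$
Since $(f_n)$ is bounded in $W^{1,p}(\R^d)$, say by $M$, this yields $\sup_k\|f_n\|_{L^q(Q_k)}\ge c_0:=\big(\epsilon^q\delta/(C^pM^p)\big)^{1/(q-p)}>0$, a bound independent of $n$. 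Hence for each $n$ there is a cube $Q_{k(n)}$, which we write as $Q_{k(n)}=Q_0+y_n$ with $Q_0$ the cube centered at the origin, such that $\|f_n\|_{L^q(Q_{k(n)})}\ge c_0/2$.

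What remains is routine. Put $g_n:=f_n(\cdot+y_n)$, so $\|g_n\|_{W^{1,p}(\R^d)}=\|f_n\|_{W^{1,p}(\R^d)}\le M$ and $\|g_n\|_{L^q(Q_0)}=\|f_n\|_{L^q(Q_{k(n)})}\ge c_0/2$. As $1<p<+\ii$, the space $W^{1,p}(\R^d)$ is reflexive, so after passing to a subsequence $g_n\wto g$ in $W^{1,p}(\R^d)$, and in particular $g_n|_{Q_0}\wto g|_{Q_0}$ in $W^{1,p}(Q_0)$. Our choice of $q$ makes the Rellich--Kondrachov embedding $W^{1,p}(Q_0)\hookrightarrow L^q(Q_0)$ compact, so $g_n\to g$ strongly in $L^q(Q_0)$ along this subsequence, whence $\|g\|_{L^q(Q_0)}=\lim_n\|g_n\|_{L^q(Q_0)}\ge c_0/2>0$. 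Thus $g\ne0$, and it is the desired non-zero weak limit (along a subsequence) of $f_n(\cdot-x_n)$ with $x_n:=-y_n$.

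The main obstacle is the middle step. The bare lower bound $\|f_n\|_{L^q(\R^d)}^q\ge\epsilon^q\delta$ does not by itself preclude the \emph{vanishing} scenario in which the $L^q$-mass of $f_n$ leaks thinly across infinitely many cubes; what forbids this is the interplay between the \emph{global} gradient bound (which gives $\ell^1$-summability of $(\|f_n\|_{W^{1,p}(Q_j)}^p)_j$) and the \emph{local} Sobolev inequality, which together pin a fixed amount of mass into one cube. One recognizes here a version of Lions' dichotomy between vanishing and non-vanishing. The only point requiring care afterwards is to choose $q$ strictly below $p^*$ when $p<d$, so that the Rellich embedding into $L^q$ is genuinely compact.
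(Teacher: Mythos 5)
Your proof is correct, and it is precisely the second of the two proofs that the paper points to (the one the paper attributes to Brézis in \cite{Lieb-83}): tile $\R^d$ by unit cubes, combine the local Sobolev inequality $\|u\|_{L^q(Q_j)}\le C\|u\|_{W^{1,p}(Q_j)}$ with the $\ell^p$-additivity of $\|\cdot\|_{W^{1,p}(Q_j)}^p$ over the tiling to pin a fixed amount of $L^q$-mass inside a single cube, then re-center and use Rellich--Kondrachov to pass from weak $W^{1,p}(\R^d)$-convergence to strong local $L^q$-convergence and conclude that the weak limit is non-zero. The paper does not reproduce the argument itself, instead citing it alongside Lieb's original alternative via comparison of lowest Dirichlet eigenvalues of two domains; your write-up is a faithful and complete rendering of the Brézis route, with the correct care taken to choose $q$ strictly subcritical so that the Rellich embedding is genuinely compact.
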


\begin{remark}
 In the above result, the Sobolev space $W^{1,p}(\R^d)$ may be replaced by its homogeneous version $\dot{W}^{1,p}(\R^d)$.
\end{remark}

We will present below some applications of Proposition \ref{prop:non-zero-weak-W1p} in the case $p=2$ to obtain Assumption (4) in Proposition \ref{prop:MMM-2q} in its Hilbert space version with $\cH=H^1(\R^d)$. Notice that Proposition \ref{prop:non-zero-weak-W1p} naturally breaks translation invariance to obtain a non-zero weak limit. In \cite{Lieb-83}, one proof of Proposition \ref{prop:non-zero-weak-W1p} is done using some comparison between the lowest Dirichlet eigenvalues of two domains, and another, due to Br\'ezis, is done via localization on cubes and the Sobolev inequality.

The main assumption on the level sets of $(f_n)$ in Proposition \ref{prop:non-zero-weak-W1p} can be obtained in practice via the following $pqr$ lemma \cite[Lemma 2.1]{FroLieLos-86} (see also \cite[Lemma 2.1]{BreLie-84}).

\begin{lemma}[$pqr$ lemma]
 Let $X$ be a measure space and let $0<p<q<r<+\ii$. Let $(f_n)$ be a bounded sequence in $L^p(X)\cap L^r(X)$ such that there exists $\alpha>0$ such that for all $n$, $\|f_n\|_{L^q}\ge\alpha$. Then, there exist $\delta,\epsilon>0$ such that for all $n$, $|\{|f_n|>\epsilon\}|\ge\delta$.
\end{lemma}

\begin{proof}
 Let $\epsilon>0$. For all $n$, we have 
 \begin{align*}
    \alpha^q &\le \int_X |f_n|^q = \int_{|f_n|\le\epsilon}|f_n|^q + \int_{\epsilon<|f_n|<1/\epsilon}|f_n|^q+\int_{|f_n|\ge1/\epsilon}|f_n|^q \\
    &\le \epsilon^{q-p}\|f_n\|_{L^p}^p+\epsilon^{-q}|\{|f_n|>\epsilon\}|+\epsilon^{r-q}\|f_n\|_{L^r}^r \\
    &\le C(\epsilon^{q-p}+\epsilon^{r-q})+\epsilon^{-q}|\{|f_n|>\epsilon\}|,
 \end{align*}
 leading to the result for $\epsilon$ small enough.
\end{proof}

Combining Proposition \ref{prop:non-zero-weak-W1p} and the $pqr$ lemma leads to the following useful result. 

\begin{proposition}\label{prop:dicho-W1p}
 Let $d\ge1$ and $1<p<+\ii$. Let $q\in(p,+\ii)$ such that $1/q>1/p-1/d$. Assume that $(f_n)$ is a bounded sequence in $W^{1,p}(\R^d)$. Then, we have the following alternative:
 \begin{enumerate}
  \item either $f_n\to0$ in $L^q(\R^d)$; 
  \item or there exists $(x_n)\subset\R^d$ such that, up to a subsequence, $(f_n(\cdot-x_n))$ has a non-zero weak limit in $W^{1,p}(\R^d)$.
 \end{enumerate}
\end{proposition}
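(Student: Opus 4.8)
The plan is to prove the dichotomy by showing that if alternative (1) fails then alternative (2) holds, the bridge between the two being the $pqr$ lemma followed by Proposition \ref{prop:non-zero-weak-W1p}. So I would start by assuming $f_n\not\to0$ in $L^q(\R^d)$; passing to a subsequence, there is then $\alpha>0$ with $\|f_n\|_{L^q}\ge\alpha$ for all $n$. My goal becomes to feed the level-set hypothesis of Proposition \ref{prop:non-zero-weak-W1p} into the machine, and for that I need three exponents $0<p<q<r<+\ii$ in which $(f_n)$ is bounded in $L^p(\R^d)\cap L^r(\R^d)$.

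The key observation is that the assumption $1/q>1/p-1/d$ says precisely that $q$ is strictly \emph{sub}critical. If $p<d$ this reads $p<q<p^*$ with $p^*:=dp/(d-p)$, and the Sobolev embedding $W^{1,p}(\R^d)\hookrightarrow L^s(\R^d)$ for $s\in[p,p^*]$ gives that $(f_n)$ is bounded in $L^p\cap L^{p^*}$; I would take $r=p^*<+\ii$. If $p=d$, then $W^{1,p}(\R^d)\hookrightarrow L^s(\R^d)$ for all $s\in[p,+\ii)$, and if $p>d$ then $W^{1,p}(\R^d)\hookrightarrow L^\ii(\R^d)$; in either of these two cases $(f_n)$ is bounded in $L^p\cap L^r$ for any choice of $r\in(q,+\ii)$. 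In all cases one ends up with $0<p<q<r<+\ii$, boundedness of $(f_n)$ in $L^p(\R^d)\cap L^r(\R^d)$, and $\|f_n\|_{L^q}\ge\alpha$.

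I would then invoke the $pqr$ lemma with these exponents to obtain $\delta,\epsilon>0$ with $|\{|f_n|>\epsilon\}|\ge\delta$ for all $n$ (along the subsequence). Since $(f_n)$ is bounded in $W^{1,p}(\R^d)$, Proposition \ref{prop:non-zero-weak-W1p} then produces translations $(x_n)\subset\R^d$ such that, after a further subsequence if needed, $(f_n(\cdot-x_n))$ converges weakly in $W^{1,p}(\R^d)$ to a non-zero limit, which is exactly alternative (2).

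I do not expect a genuine obstacle: the argument is just the concatenation of the two quoted results. The only point that needs attention is the exponent bookkeeping — recognizing that $1/q>1/p-1/d$ is equivalent to $q$ being subcritical, hence to the existence of a \emph{finite} exponent $r>q$ in which the Sobolev embedding still controls $(f_n)$; the strict inequality in the hypothesis is what makes it possible to sandwich $q$ strictly between $p$ and such an $r$, which is exactly what the $pqr$ lemma requires.
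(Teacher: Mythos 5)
Your proposal is correct and follows essentially the same route as the paper: Sobolev embedding to get boundedness in $L^p\cap L^r$ for some finite $r>q$, the $pqr$ lemma to extract the level-set bound, then Proposition \ref{prop:non-zero-weak-W1p} to produce the translated non-zero weak limit. The only difference is that you spell out the case distinction $p<d$, $p=d$, $p>d$ for the choice of $r$, whereas the paper simply asserts the existence of a suitable $q^*>q$; this is a harmless elaboration of the same argument.
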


\begin{proof}[Proof of Proposition \ref{prop:dicho-W1p}]
  By Sobolev's embedding, there exists $q^*>q$ such that $(f_n)$ is bounded in $L^p(\R^d)\cap L^{q^*}(\R^d)$. If $(f_n)$ does not converge to zero in $L^q(\R^d)$, then there exists $\alpha>0$ such that, up to a subsequence, we have $\|f_n\|_{L^q}\ge\alpha$ for all $n$. By the $pqr$ lemma, $(f_n)$ thus satisfies the assumptions of Proposition \ref{prop:non-zero-weak-W1p} and hence admits non-zero weak limits up to translations.
\end{proof}

For optimization problems posed on $H^1(\R^d)$, Proposition \ref{prop:MMM-2q} and Proposition \ref{prop:dicho-W1p} provide a roadmap to obtain the existence of optimizers: Assumption (4) would follow if optimizing sequences do not converge to zero in some $L^q(\R^d)$ for a subcritical $q$, and there remains to understand why Assumption (5) holds. We now explain how to apply these tools in some explicit cases.

\section{Applications to problems with exact symmetries}\label{sec:app-exact-sym}

We apply the compactness tools introduced in the previous section to solve several optimization problems. We begin with the historical example of Lieb \cite{Lieb-83b} concerning the Hardy-Littlewood-Sobolev inequality, emphasizing how the invariances are broken in conjonction with Proposition \ref{prop:MMM-pq} to obtain existence of optimizers. We then shift to the Gagliardo-Nirenberg-Sobolev inequality to illustrate how to apply to $H^1$-methods above, as an introduction to the work of Br\'ezis-Lieb on vector field equations \cite{BreLie-84}. Finally, we explain how to apply similar ideas to the Sobolev, (generalized) Gagliardo-Nirenberg-Sobolev, and Strichartz inequalities. 

\subsection{Hardy-Littlewood-Sobolev inequality}

Let $d\in\N^*$. Let $1<p,q<+\ii$ and $0<\lambda<d$ be such that $1/p + \lambda/d = 1+1/q$. Consider the optimization problem
$$\cS:=\sup\Big\{ \|f*|\cdot|^{-\lambda}\|_{L^q(\R^d)}^q\ :\ f\in L^p(\R^d),\ \|f\|_{L^p(\R^d)}=1\Big\},$$
corresponding to the sharp constant in the Hardy-Littlewood Sobolev inequality. As we already mentioned, this problem exhibits both translation and dilation invariance (meaning that both the optimization set and the energy functional are invariant under the transformations $f(x)\to f(x+x_0)$ and $f(x)\to \delta^{d/p}f(\delta x)$ for any $x_0\in\R^d$ and any $\delta>0$). Despite this difficulty, Lieb proved existence of maximizers:

\begin{theorem}{\cite[Theorem 2.3]{Lieb-83b}}\label{thm:HLS-pq}
  There exists $f\in L^p(\R^n)$ with $\|f\|_{L^p(\R^d)}=1$ such that $\|f*|\cdot|^{-\lambda}\|_{L^q(\R^d)}^q=\cS$.
\end{theorem}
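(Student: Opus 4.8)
The plan is to construct a maximizing sequence that satisfies all five hypotheses of Proposition \ref{prop:MMM-pq} with $A$ the convolution operator $Af = f*|\cdot|^{-\lambda}$, which is indeed a bounded nonzero operator from $L^p(\R^d)$ to $L^q(\R^d)$ by the (non-sharp) Hardy-Littlewood-Sobolev inequality, and for which $p<q$ since $1/q = 1/p + \lambda/d - 1 < 1/p$. Start from an arbitrary maximizing sequence $(g_n)\subset L^p(\R^d)$ with $\|g_n\|_{L^p}=1$ and $\|Ag_n\|_{L^q}^q\to\cS$. The task is then to modify $(g_n)$, using the translation and dilation invariances, so that the modified sequence converges almost everywhere to a nonzero limit along a subsequence, and so that the convolutions also converge almost everywhere.

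The first key step is to \emph{break the dilation invariance} using a normalization of the convolutions. Since $Ag_n$ is, after translation and dilation, still a maximizing sequence with the same $L^q$ norm, and since $\cS>0$, one can choose the dilation parameter $\delta_n>0$ so that the level set $\{|Ag_n|>1\}$ has measure exactly (or bounded above and below by) a fixed constant; here one uses that $t\mapsto |\{|\,\delta^{d/q}(Ag_n)(\delta\,\cdot)\,|>1\}|$ sweeps out $(0,\ii)$ as $\delta$ varies, together with the uniform bound $\|Ag_n\|_{L^q}^q\le\cS+1$ which via Chebyshev controls the measure of superlevel sets. The second key step is to \emph{break the translation invariance}: apply Proposition \ref{prop:non-zero-weak-W1p}-type reasoning, or rather its $L^p$ analogue for the convolutions, to the sequence $(Ag_n)$. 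Concretely, $Ag_n = g_n * |\cdot|^{-\lambda}$ enjoys extra regularity — it lies in a Sobolev-type space because convolution with $|\cdot|^{-\lambda}$ is (essentially) a Riesz potential, so $(Ag_n)$ is bounded in $W^{1,r}$ or at least in $L^q\cap L^{q'}$ for suitable exponents; combined with the lower bound on $|\{|Ag_n|>1\}|$ just obtained, a concentration argument (cover space by unit cubes, use that mass cannot be spread too thin because of the Sobolev-type control) produces translations $x_n$ so that $(Ag_n)(\cdot - x_n)$ does not converge weakly to zero. Replacing $g_n$ by $g_n(\cdot - x_n)$ (which commutes with $A$), we now have a maximizing sequence whose convolutions have a nonzero weak limit.

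The third step extracts almost-everywhere convergence. By weak-$L^p$ compactness (here $1<p<\ii$ so $L^p$ is reflexive), pass to a subsequence with $g_n\rightharpoonup f$ weakly in $L^p$. Then $Ag_n \rightharpoonup Af$ in $L^q$ by boundedness of $A$, so $Af$ is the nonzero weak limit found above, giving hypothesis (4): $f\neq 0$. To upgrade to a.e. convergence — hypotheses (3) and (5) — I would invoke the local compactness built into the Riesz potential: $Ag_n$ is bounded in a space compactly embedded (on bounded sets) in $L^1_{\mathrm{loc}}$, so after a further diagonal subsequence $Ag_n\to Af$ a.e.; and for $g_n$ itself one uses the standard fact that a weakly convergent sequence in $L^p$ which is, say, locally equi-integrable (or which one can arrange to converge a.e. by a further refinement, e.g. via the Banach-Saks/Komlós-type theorems, or simply by noting that in the HLS application the relevant a.e. convergence of the $f_n$ is obtained by Lieb's original rearrangement-free argument) has a subsequence converging a.e. Once all five hypotheses hold, Proposition \ref{prop:MMM-pq} yields $f_n\to f$ strongly in $L^p$, hence $\|f\|_{L^p}=1$ and $\|Af\|_{L^q}^q=\cS$, which is exactly the claim.

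The main obstacle is squarely the passage to \emph{almost everywhere} convergence, i.e.\ verifying hypotheses (3) and (5): weak convergence is free from reflexivity, but a.e.\ convergence is not, and the whole point of the Br\'ezis-Lieb machinery is that it genuinely requires a.e.\ control. The way through, following Lieb, is to exploit that the \emph{output} side $Af = f * |\cdot|^{-\lambda}$ is smoothing — it is a Riesz potential of order $d-\lambda$ — so that $(Ag_n)$ is bounded in a Sobolev space and Rellich-Kondrachov gives local strong, hence a.e., convergence of $(Ag_n)$; one then transfers a.e.\ compactness back to $(g_n)$ itself, which is the delicate point and is precisely where one must use the structure of the convolution operator rather than abstract functional analysis. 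Everything else — the two invariance-breaking normalizations and the final application of Proposition \ref{prop:MMM-pq} — is, modulo routine estimates, mechanical.
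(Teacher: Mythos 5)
Your proposal is substantially different from the paper's, and it has a genuine gap at the step you yourself flag as ``the delicate point.''

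The paper's proof proceeds by first passing to a rearranged maximizing sequence $(f_n^*)$ of nonnegative radially symmetric decreasing functions (Riesz rearrangement does not decrease $\|f*|\cdot|^{-\lambda}\|_{L^q}$ and preserves $\|f\|_{L^p}$); translation invariance is thereby broken at the outset. Dilation invariance is then broken via the refined inequality of Proposition~\ref{prop:refined-HLS-pq}, which forces $\sup_r r^{d/p}f_n(r)\ge c>0$, and dilating so that $f_n(1)\ge c/2$. The crucial structural payoff of rearrangement is that one now has a sequence of nonincreasing functions on $(0,\infty)$ with the uniform bound $f_n(r)\le c' r^{-d/p}$; \emph{Helly's selection theorem} for monotone functions then hands you pointwise convergence along a subsequence, simultaneously settling hypotheses (3) and (4) of Proposition~\ref{prop:MMM-pq}, and the same pointwise majorant gives (5) by dominated convergence. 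Your proposal discards rearrangement entirely and instead tries to normalize via dilations on the output side $Ag_n$, choose translations via a concentration argument on $Ag_n$, and obtain a.e.\ convergence from reflexivity plus Riesz-potential smoothing.

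The gap is in your treatment of hypothesis (3), the a.e.\ convergence of $g_n$ itself. Weak convergence in $L^p$, even with local equi-integrability, does \emph{not} yield a.e.\ convergence along a subsequence: the sequence $g_n(x)=\sin(nx)\1_{[0,2\pi]}(x)$ is bounded in every $L^p$, converges weakly to zero, is trivially locally equi-integrable, yet admits no a.e.\ convergent subsequence (else dominated convergence would force $\|g_n\|_{L^2}\to 0$, contradicting $\|g_n\|_{L^2}=\sqrt{\pi}$). The Banach--Saks and Koml\'os theorems you invoke give a.e.\ convergence only of \emph{Ces\`aro averages} of a subsequence, but averaging destroys the maximizing property: the averages need not satisfy $\|h_n\|_{L^p}=1$ or $\|Ah_n\|_{L^q}^q\to\cS$, so they are useless as input to Proposition~\ref{prop:MMM-pq}. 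Your fallback of ``transferring a.e.\ compactness back to $(g_n)$ from $(Ag_n)$'' also fails in general: the Riesz potential $A$ is smoothing and hence not boundedly invertible, so local strong (let alone a.e.) convergence of $Ag_n$ gives no pointwise information about $g_n$. Finally, your parenthetical that ``Lieb's original rearrangement-free argument'' supplies the a.e.\ convergence inverts the actual state of affairs: Lieb's argument is precisely the rearrangement argument, and it is the monotonicity obtained from rearrangement (via Helly) that produces a.e.\ convergence. Without rearrangement or some substitute structural input, hypothesis (3) is simply not available, and the plan does not close.
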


The proof of Theorem \ref{thm:HLS-pq} consists in applying Proposition \ref{prop:MMM-pq}. The way to obtain Assumption (3) here is to resort to rearrangement inequalities \cite[Lemma 2.1]{Lieb-83b} to infer that there exists a maximizing sequence $(f_n)\subset L^p(\R^d)$ for $\cS$ such that for any $n$, $f_n$ is radially decreasing and non-negative (indeed, for any $f\in L^p(\R^d)$, one has $\|f*|\cdot|^{-\lambda}\|_{L^q(\R^d)}\le \|(f^*)*|\cdot|^{-\lambda}\|_{L^q(\R^d)}$ and $\|f\|_{L^p}=\|f^*\|_{L^p}$ where $f^*$ is the symmetric decreasing rearrangement of $f$ \cite[Definition 1]{Lieb-83b}). Notice that this remark breaks the translation-invariance of the problem, and to break the dilation invariance we use the following argument. Abusing notations, we still denote by $f_n$ the function of the radius so that $f_n:x\mapsto f_n(|x|)$. The next result enables to deal with dilations and to obtain Assumption (4), i.e. to prove that $f_n$ has a non-zero pointwise limit. 

\begin{proposition}{\cite[Lemma 2.4]{Lieb-83b}}\label{prop:refined-HLS-pq}
 There exists $C>0$ and $\theta\in(0,1)$ such that for any non-negative and radially decreasing $f\in L^p(\R^d)$ we have 
 $$\| f * |\cdot|^{-\lambda} \|_{L^q} \le C\Big(\sup_{r>0}r^{d/p}f(r)\Big)^\theta\|f\|_{L^p}^{1-\theta}.$$
\end{proposition}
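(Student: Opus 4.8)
The plan is to interpolate the convolution operator $f \mapsto f * |\cdot|^{-\lambda}$ between two endpoint estimates that exploit the fact that $f$ is radially decreasing. First I would record the pointwise bound that comes from monotonicity: for a non-negative radially decreasing $f$, one has $f(r) \le C\, r^{-d/p}\|f\|_{L^p}$ for every $r>0$ (integrate $|f|^p$ over the ball of radius $r$ and use that $f$ is at least $f(r)$ there). More importantly, writing $M := \sup_{r>0} r^{d/p} f(r)$, we get the two-sided control $f(r) \le \min\{M\,r^{-d/p},\ C r^{-d/p}\|f\|_{L^p}\}$, but the point is really to split the convolution integral at a radius to be optimized.

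The main step is to estimate $(f*|\cdot|^{-\lambda})(x)$ for a fixed $x$ by cutting the integral $\int f(y)|x-y|^{-\lambda}\d{y}$ into the region $|x-y| \le R$ and the region $|x-y| > R$, for a parameter $R>0$ to be chosen at the end depending on $x$, $M$, and $\|f\|_{L^p}$. On the near region, I would bound $f(y) \le f(\text{something}) $ crudely but use the pointwise bound $f(y)\le M|y|^{-d/p}$ together with $\int_{|x-y|\le R}|x-y|^{-\lambda}|y|^{-d/p}\d{y}$, which converges since $\lambda < d$ and $d/p < d$ and the scaling in $R$ is a positive power (the relation $1/p+\lambda/d = 1+1/q$ guarantees the homogeneity works out to a power $R^{d/q'}$ or similar — this bookkeeping is routine). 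On the far region $|x-y|>R$ one uses Hölder with exponents $p$ and $p'$: $\int_{|x-y|>R} f(y)|x-y|^{-\lambda}\d{y} \le \|f\|_{L^p}\big(\int_{|x-y|>R}|x-y|^{-\lambda p'}\d{y}\big)^{1/p'}$, and this integral converges precisely when $\lambda p' > d$, again giving a negative power of $R$. Optimizing over $R$ then yields a pointwise bound of the form $(f*|\cdot|^{-\lambda})(x) \le C M^{\theta}\|f\|_{L^p}^{1-\theta} |x|^{-d/q}$ for the appropriate $\theta\in(0,1)$ (the power of $|x|$ is forced by scaling). Finally, taking the $L^q$ norm of this would diverge, so instead I would keep a little room: bound $(f*|\cdot|^{-\lambda})(x)$ by a geometric interpolation between the bound $\|f*|\cdot|^{-\lambda}\|_{L^\infty}\le C M^{\theta_0}\|f\|_{L^p}^{1-\theta_0}$ and the endpoint HLS inequality $\|f*|\cdot|^{-\lambda}\|_{L^q}\le C\|f\|_{L^p}$ itself — that is, write $\|g\|_{L^q}^q = \int g^q \le \|g\|_{L^\infty}^{q-\tilde q}\|g\|_{L^{\tilde q}}^{\tilde q}$ is not quite it; cleaner is to directly interpolate the $L^\infty$ bound with HLS via $\|g\|_{L^q} \le \|g\|_{L^\infty}^{1-s}\|g\|_{L^{q_1}}^{s}$ for a subcritical or supercritical pair, where the second factor is again controlled by $\|f\|_{L^p}$ through a different HLS exponent.

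A slightly slicker route avoiding the delicate endpoint behavior: prove the pointwise bound $(f*|\cdot|^{-\lambda})(x)\le C M^{\theta}\|f\|_{L^p}^{1-\theta}|x|^{-d/q}$ as above and separately the trivial bound $(f*|\cdot|^{-\lambda})(x)\le C\|f\|_{L^p}$-type estimates near the origin and at infinity, then take $L^q$ by splitting $\R^d$ into $|x|<\rho$ and $|x|>\rho$ and using the $L^p$-HLS bound on one piece and the refined pointwise bound on the other, optimizing over $\rho$; since $L^q$ of $|x|^{-d/q}$ diverges only logarithmically at both ends, a perturbation of exponents closes it.

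The hard part will be the bookkeeping to verify that all the integrals in the near/far splitting actually converge — i.e. that $\lambda < d$ and $\lambda p' > d$ and $d/p < d$ hold under the HLS scaling relation $1/p+\lambda/d = 1+1/q$ — and, more substantially, handling the fact that $\int_{\R^d}|x|^{-q\cdot d/q}\d{x} = \int |x|^{-d}\d{x}$ is logarithmically divergent, so the naive pointwise bound cannot simply be integrated. Interpolating against the known HLS inequality (which is available since we only need the existence of the sharp constant $\cS<\infty$, not its value) is what resolves this, and getting the exponents $\theta$ and the interpolation parameter to be consistent and lie strictly in $(0,1)$ is the real content.
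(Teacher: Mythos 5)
Your approach (a physical-space near/far splitting of the convolution, optimized over the cutoff radius $R$) is genuinely different from the paper's, which instead conjugates by the isometry $U_s\colon L^s_{\mathrm{rad}}(\R^d)\to L^s(\R)$, $(U_sf)(u)=e^{du/s}f(e^u)$; under this logarithmic change of variables the radial convolution $f\mapsto f*|\cdot|^{-\lambda}$ becomes a one-dimensional convolution $g\mapsto g*L_d$ with an explicit kernel $L_d\in L^1(\R)$, so Young's inequality gives boundedness on every $L^s(\R)$ and the statement follows from the elementary H\"older interpolation $\|h\|_{L^q}\le\|h\|_{L^p}^{p/q}\|h\|_{L^\ii}^{1-p/q}$ together with $\sup_u|(U_pf)(u)|=\sup_{r>0}r^{d/p}f(r)$. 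The divergence you are fighting in physical space simply does not arise there, because the interpolation is performed on the image side between $L^p(\R)$ and $L^\ii(\R)$, both of which are finite by Young.

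As written, your proposal has a genuine gap and I do not see how to close it along the lines sketched. First, the optimization over $R$ does not actually produce the claimed pointwise estimate $(f*|\cdot|^{-\lambda})(x)\le CM^{\theta}\|f\|_{L^p}^{1-\theta}|x|^{-d/q}$ with $\theta\in(0,1)$: if you carry out the computation with $R=T|x|$, the near-piece scales as $MT^{d-\lambda}|x|^{-d/q}$ and the far-piece as $\|f\|_{L^p}T^{-d/q}|x|^{-d/q}$, and the unconstrained optimum is at $T=(\|f\|_{L^p}/M)^{p/d}$, which (since $M\lesssim\|f\|_{L^p}$ always) forces $T\gtrsim1$; at that scale the near-region contribution contains the singularity of $|y|^{-d/p}$ at the origin and saturates at $CM|x|^{-d/q}$ regardless of $T$. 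So all the splitting buys you is the unrefined bound $(f*|\cdot|^{-\lambda})(x)\le CM|x|^{-d/q}$, with exponent $\theta=1$ and no $\|f\|_{L^p}$ in it. Second, even granting the refined pointwise bound, the suggestion to interpolate against HLS on a complementary region, or to ``perturb the exponents,'' is not a mechanism: HLS gives $\|f*|\cdot|^{-\lambda}\|_{L^q(\Omega)}\le C\|f\|_{L^p}$ on any region $\Omega$ with no smallness in $M$, so adding the two pieces cannot produce the factor $M^\theta\|f\|_{L^p}^{1-\theta}$. What is actually needed is a weighted estimate of the form $\int_{\R^d}|x|^{dp/q-d}\,|f*|\cdot|^{-\lambda}|^p\,dx\le C\|f\|_{L^p}^p$ for radial $f$ (which, when combined with your $L^\ii$-type bound and H\"older, does close), and that weighted estimate is precisely the $L^p(\R)\to L^p(\R)$ boundedness of $g\mapsto g*L_d$ in disguise. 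In other words, the physical-space route can be made to work, but only by re-deriving the content of the logarithmic substitution; the two vague closing steps in your sketch are exactly where this extra idea is missing.
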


Such an inequality may be called a refined HLS inequality because it implies the actual HLS inequality, since $\sup_{r>0}r^{d/p}f(r)\le c\|f\|_{L^p}$ due to the fact that $f$ is radially decreasing. Applying Proposition \ref{prop:refined-HLS-pq} to $(f_n)$ and using that $\cS>0$, we infer that there exist $c>0$ such that for all $n$ one has $\sup_{r>0}r^{d/p}f_n(r)>c$. Hence, for any $n$ there exists $r_n>0$ such that $r_n^{d/p}f_n(r_n)\ge c/2$. Replacing $f_n$ by $r_n^{d/p}f_n(r_n\cdot)$ --and still denoting this sequence by $(f_n)$--, we obtain a sequence of non-negative radially decreasing functions $(f_n)$ which is maximizing for $\cS$ and such that $f_n(1)\ge c/2$ for all $n$. Since $\|f_n\|_{L^p}=1$, we have $f_n(r)\le(|\Sph^{d-1}|/d)^{-1/p}r^{-n/p}$ for all $n$ and $r$, so that using Helly's selection principle there exists a non-negative radially decreasing function $f\in L^p(\R^d)$ with $\|f\|_{L^p}\le 1$ such that (up to a subsequence) $f_n(r)\to f(r)$ as $n\to+\ii$ for all $r>0$. In particular, we have $f(1)\ge c/2>0$ implying that $f\neq0$ and thus recovering Assumption (4) of Proposition \ref{prop:MMM-pq}. There remains to prove Assumption (5) of Proposition \ref{prop:MMM-pq}, which follows from the elementary

\begin{lemma}
 Let $(f_n)\subset L^p(\R^d)$ be a sequence of non-negative radially decreasing functions such that $\|f_n\|_{L^p}=1$ for all $n$ and such that $f_n\to f$ almost everywhere. Then, $f_n*|\cdot|^{-\lambda}\to f*|\cdot|^{-\lambda}$ almost everywhere. 
\end{lemma}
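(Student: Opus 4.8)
The plan is to apply the dominated convergence theorem pointwise in $x$, after extracting from the radial monotonicity a bound on $(f_n)$ that is uniform in $n$.

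First I would record the elementary pointwise bound already used above: since each $f_n$ is non-negative, radially decreasing, and satisfies $\|f_n\|_{L^p}=1$, for every $x\neq0$ one has
$$1=\|f_n\|_{L^p}^p\ge\int_{|y|\le|x|}f_n(y)^p\d y\ge f_n(x)^p\,\frac{|\Sph^{d-1}|}{d}\,|x|^d,$$
hence $0\le f_n(x)\le C|x|^{-d/p}$ with $C=(|\Sph^{d-1}|/d)^{-1/p}$ independent of $n$. Passing to the limit $n\to+\ii$ gives the same bound for $f$, so in particular $f*|\cdot|^{-\lambda}$ (and each $f_n*|\cdot|^{-\lambda}$) is finite for a.e.\ $x$.

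Next, fix $x\in\R^d\setminus\{0\}$. For a.e.\ $y\in\R^d$ we have $f_n(y)|x-y|^{-\lambda}\to f(y)|x-y|^{-\lambda}$, the exceptional set being the null set on which $f_n(y)\not\to f(y)$, together with the single point $y=x$. Moreover the integrands are dominated, uniformly in $n$, by $C\,g_x$ where $g_x(y):=|y|^{-d/p}|x-y|^{-\lambda}$. The key point is that $g_x\in L^1(\R^d)$: near $y=0$ it behaves like $|y|^{-d/p}$, integrable because $p>1$; near $y=x$ it behaves like $|x-y|^{-\lambda}$, integrable because $\lambda<d$; and near infinity it behaves like $|y|^{-d/p-\lambda}$, integrable because $1/p+\lambda/d=1+1/q>1$. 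This last regime is precisely where the scaling relation defining the HLS inequality is used, together with $q<+\ii$. The dominated convergence theorem then yields $(f_n*|\cdot|^{-\lambda})(x)\to(f*|\cdot|^{-\lambda})(x)$, and since $x\neq0$ was arbitrary this is the claimed almost everywhere convergence.

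There is no genuine obstacle here; the only point deserving a moment's attention is the integrability of the dominating function $g_x$ in the three regimes $y\to0$, $y\to x$, $y\to\ii$, and noticing that borderline integrability at infinity is guaranteed, with room to spare, by the exponent relation $1/p+\lambda/d=1+1/q$.
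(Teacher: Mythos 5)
Your proof is correct and follows exactly the paper's argument: the uniform pointwise bound $f_n(x)\le C|x|^{-d/p}$ coming from radial monotonicity and $\|f_n\|_{L^p}=1$, followed by dominated convergence against $C|y|^{-d/p}|x-y|^{-\lambda}$, whose integrability the paper simply asserts and you verify in the three regimes. The only difference is that you spell out the integrability check (including the role of the exponent relation at infinity), which the paper leaves to the reader.
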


\begin{proof}
 Recall that $f_n(r)\le cr^{-d/p}$ with $c$ independent of $n$. Then, the result follows from dominated convergence, noticing that for any $x\neq0$, $y\mapsto|y|^{-d/p}|x-y|^{-\lambda}$ is integrable on $\R^d$. 
\end{proof}

It remains to give the 

\begin{proof}[Proof of Proposition \ref{prop:refined-HLS-pq}]
 Denoting by $A$ the operator $f\mapsto f*|\cdot|^{-\lambda}$ and by $U_s:L^s_{\rm rad}(\R^d)\to L^s(\R)$ the isometric isomorphism $(U_s f)(u)=e^{du/s}f(e^u)$, we have $U_q A U_p^*: g\mapsto g*L_d$ with 
 $$\forall u\in\R,\ L_d(u):=e^{du/q}\int_{\Sph^{d-1}}\frac{d\omega}{|e^{u}e_1-\omega|^\lambda}=c_de^{du/q}\int_0^\pi\frac{(\sin\theta)^{d-2}\,d\theta}{((e^u-\cos\theta)^2+\sin^2\theta)^{\lambda/2}}.$$
 Since 
 $$L_d(u)\sim
 \begin{cases}
  c e^{du/q} & \text{as}\ u\to-\ii,\\
  c \begin{cases} 1 & \text{if}\ \lambda<d-1 \\ \log(1/u) & \text{if}\ \lambda=d-1\\ u^{d-1-\lambda} & \text{if}\ \lambda>d-1 \end{cases} & \text{as}\ u\to0,\\
  c e^{(d/q-\lambda)u} &\text{as}\ u\to+\ii,
 \end{cases}
 $$
 the function $L_d$ is integrable on $\R$. Hence, $U_q A U_p^*$ is bounded on $L^s(\R^d)$ for all $s\in[1,+\ii]$ and using that $q\in(p,+\ii)$, we deduce by H\"older's inequality that there exist $C>0$ and $\theta\in(0,1)$ such that 
 $$\forall g\in L^p(\R)\cap L^\ii(\R),\ \| U_q A U_p^*g\|_{L^q} \le \| U_q A U_p^*g\|_{L^p}^{1-\theta}\| U_q A U_p^*g\|_{L^\ii}^\theta\le C\|g\|_{L^p}^{1-\theta}\|g\|_{L^\ii}^\theta.$$
 Stating this inequality in terms of the operator $A$, we get the result.
\end{proof}

\begin{remark}
 A consequence of the previous proof and of Proposition \ref{prop:MMM-pq} is that any maximizing sequence $(f_n)$ for $\cS$ admits a subsequence such that there exists $(\delta_n)\subset(0,+\ii)$ such that $(\delta_n^{d/p}f_n^*(\delta_n\cdot))$ converges strongly in $L^p(\R^d)$ (and hence the limit is a maximizer for $\cS$). Lions \cite[Theorem 2.1]{Lions-85b} proved that any maximizing sequence $(f_n)$ for $\cS$ admits a subsequence such that there exists $(x_n)\subset\R^d$ and $(\delta_n)\subset(0,+\ii)$ such that $(\delta_n^{d/p}f_n(\delta_n(\cdot-x_n)))$ converges strongly in $L^p(\R^d)$ (that is, we use translations instead of rearrangement).
\end{remark}

\subsection{Gagliardo-Nirenberg-Sobolev inequality}

In the above approach to the HLS inequality, rearrangement was used to break the translation-invariance of the problem. In the following, we will see that the same method can be applied to prove the existence of optimizers for the model problem of Gagliardo-Nirenberg-Sobolev inequality. In one space dimension, this was done explicitly by Lieb in \cite[Theorem 4.2]{Lieb-83b}, but his method (which was used before in \cite[Theorem 7]{Lieb-77} and \cite[Appendix A]{LieOxf-81}) works in any dimension. Another proof using rearrangement was suggested in \cite[Remark (A), Sec. 4]{LieThi-76}, using rather the methods of \cite{GlaMarGroThi-76,Talenti-76} relying on solving the Euler-Lagrange equations among radial decreasing functions. We also provide another method based on the $H^1$-based tools of Section \ref{sec:tools}, and compare the two approaches. 

Let $d\ge1$ and $q\in(2,+\ii)$ be such that $1/q>1/2-1/d$. Define 
$$\cS:=\sup\left\{ \int_{\R^d}|u|^q\ :\ u\in H^1(\R^d),\ \|u\|_{H^1}=1\right\}.$$

\begin{proposition}\label{prop:sharp-GNS}
 There exists $u\in H^1(\R^d)$ with $\|u\|_{H^1}=1$ such that $\|u\|_{L^q}^q=\cS$.
\end{proposition}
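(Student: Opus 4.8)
The plan is to apply Proposition~\ref{prop:MMM-2q} in its Hilbert space version with $\cH=H^1(\R^d)$ and $A$ the inclusion $H^1(\R^d)\hookrightarrow L^q(\R^d)$, which is bounded and non-zero by the Gagliardo--Nirenberg--Sobolev inequality since $2<q$ and $1/q>1/2-1/d$. With this identification, $\cS$ is exactly the quantity $\sup\{\|Af\|_{L^q}^q : \|f\|_{\cH}=1\}\in(0,+\ii)$, and one picks a maximizing sequence $(u_n)\subset H^1(\R^d)$, so Assumptions (1) and (2) hold automatically. Since $(u_n)$ is bounded in $H^1(\R^d)$, which is reflexive, one extracts a weakly convergent subsequence $u_n\rightharpoonup u$ in $H^1(\R^d)$, giving Assumption (3). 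It then remains to verify Assumptions (4) and (5).

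For Assumption (4), i.e. $u\neq0$, I would first break translation invariance. Note that $(u_n)$ cannot converge to $0$ in $L^q(\R^d)$: otherwise $\|Au_n\|_{L^q}^q\to0$, contradicting $\cS>0$. Hence, up to a subsequence, $\|u_n\|_{L^q}\ge\alpha$ for some $\alpha>0$, and Proposition~\ref{prop:dicho-W1p} (whose hypotheses are met: $(u_n)$ bounded in $W^{1,2}(\R^d)$, $q\in(2,+\ii)$, $1/q>1/2-1/d$) then provides translations $(x_n)\subset\R^d$ such that, along a further subsequence, $u_n(\cdot-x_n)\rightharpoonup \tilde u$ in $H^1(\R^d)$ with $\tilde u\neq0$. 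Replacing $(u_n)$ by $(u_n(\cdot-x_n))$ --- which is still a maximizing sequence for $\cS$ because both $\|\cdot\|_{H^1}$ and $\|\cdot\|_{L^q}$ are translation-invariant --- we may assume the weak limit $u$ is non-zero, giving Assumption (4).

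For Assumption (5), i.e. $Au_n\to Au$ a.e., I would use the compact Sobolev embedding on bounded domains (Rellich--Kondrachov): for each ball $B_R\subset\R^d$, the embedding $H^1(B_R)\hookrightarrow L^q(B_R)$ is compact, so from $u_n\rightharpoonup u$ in $H^1(\R^d)$ one gets $u_n\to u$ strongly in $L^q(B_R)$, hence a.e. on $B_R$ along a subsequence. A standard diagonal argument over an exhausting sequence of balls then produces a single subsequence along which $u_n\to u$ a.e.\ on $\R^d$; since $A$ is the inclusion map, this is precisely $Au_n\to Au$ a.e. With Assumptions (1)--(5) in hand, Proposition~\ref{prop:MMM-2q} yields that $(u_n)$ converges strongly in $H^1(\R^d)$ to $u$ and that $\|Au\|_{L^q}^q=\cS$; in particular $\|u\|_{H^1}=\lim_n\|u_n\|_{H^1}=1$ and $\|u\|_{L^q}^q=\cS$, which is the claim.

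The main obstacle --- or rather the only substantive point --- is Assumption (4): one must genuinely exploit that $(u_n)$ is a maximizing sequence (via $\cS>0$) to rule out $L^q$-vanishing, and then invoke the $pqr$-lemma/non-zero-weak-limit machinery of Section~\ref{sec:tools} to recover a non-trivial weak limit after translating. Once translations have been used to break the translation invariance, Assumption (5) is soft (it follows from Rellich--Kondrachov plus diagonal extraction and costs nothing extra here, unlike in the HLS case where a dedicated dominated-convergence argument was needed), and the strong convergence and optimality come for free from Proposition~\ref{prop:MMM-2q}. As a variant, one could instead follow the rearrangement route of \cite[Theorem 4.2]{Lieb-83b}: replace $u_n$ by its symmetric decreasing rearrangement $u_n^*$, which decreases neither $\|\nabla\cdot\|_{L^2}$ (by the Pólya--Szegő inequality) nor preserves the $L^q$ and $L^2$ norms while keeping it maximizing, then break dilation invariance by a refined Gagliardo--Nirenberg inequality analogous to Proposition~\ref{prop:refined-HLS-pq} and conclude with Proposition~\ref{prop:MMM-pq} and Helly's selection principle exactly as for Theorem~\ref{thm:HLS-pq}; this avoids Rellich--Kondrachov entirely at the cost of invoking rearrangement inequalities.
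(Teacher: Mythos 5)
Your argument is essentially identical to the paper's ``alternative proof'' of Proposition~\ref{prop:sharp-GNS}: break translation invariance via Proposition~\ref{prop:dicho-W1p} (noting that $\cS>0$ rules out $L^q$-vanishing), pass to the translated sequence, obtain a.e.\ convergence from Rellich--Kondrachov, and conclude with the Hilbert-space version of Proposition~\ref{prop:MMM-2q} applied to the inclusion $A:H^1(\R^d)\hookrightarrow L^q(\R^d)$. The rearrangement route you sketch at the end matches the paper's first proof as well (modulo a small slip in wording: P\'olya--Szeg\H{o} says rearrangement does not \emph{increase} $\|\nabla\cdot\|_{L^2}$ and \emph{preserves} all $L^p$-norms), so the proposal is correct and follows the paper's approach.
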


\begin{proof}
    Let $(u_n)\subset H^1(\R^d)$ be a maximizing sequence for $\cS$. As was noticed by Lieb \cite[Lemma 4.1]{Lieb-83b} (see also \cite[Lemma 5]{Lieb-77} or \cite[Theorem 1]{GlaMarGroThi-76} for earlier proofs), since $\|\nabla f\|_{L^2}\ge \|\nabla f^*\|_{L^2}$ for any $f\in H^1(\R^d)$, one may assume that $u_n$ is radially symmetric decreasing for all $n$. Since $(u_n)$ is bounded in $H^1(\R^d)$, it converges weakly to some $u\in H^1(\R^d)$ up to a subsequence, and by the Rellich-Kondrachov theorem one may also assume that $u_n\to u$ a.e. In particular, $\|u\|_{H^1}\le1$. From the boundedness of $(u_n)$ in $H^1(\R^d)$, one also deduces that $(u_n)$ is bounded in $L^p(\R^d)$ for any $p\ge2$ such that $1/p> 1/2-1/d$. Applying this fact for $p=2$ and $p=q_0$ for some fixed $q_0>q$ with $1/q_0>1/2-1/d$, one deduces from the fact that $(u_n)$ is radially symmetric decreasing that (as in the HLS case) for all $n$ and all $r>0$, $|u_n(r)|\le C\min(r^{-d/2},r^{-d/(2q_0)})\in L^q(\R^d)$. Hence, by the dominated convergence theorem, we deduce that $u_n\to u$ in $L^q(\R^d)$ and hence $\cS=\|u\|_{L^q}^q\le \cS\|u\|_{H^1}^q$ so that $\|u\|_{H^1}\ge1$ and hence $\|u\|_{H^1}=1$. Finally, $u$ is a maximizer for $\cS$.
\end{proof}

As in the HLS inequality, the above method relies on rearrangement to break translation invariance and actually shows that for any maximizing sequence $(u_n)$, there exists a (radially symmetric decreasing) maximizer $u$ for $\cS$ such that $(u_n^*)$ converges strongly to $u$ in $H^1(\R^d)$, up to a subsequence. We now present an alternative approach to Proposition \ref{prop:sharp-GNS}, in the spirit of \cite{BreLie-84} (see also \cite[Lemma 4.2]{CarFraLie-14}), that does not rely on rearrangement but rather on the tools of Section \ref{sec:tools} and has the advantage of describing more precisely maximizing sequences. It can also be applied to problems where rearrangement is not available, as we present in the next example. 

\begin{proof}[Alternative proof of Proposition \ref{prop:sharp-GNS}]
  Let $(u_n)$ be a maximizing sequence for $\cS$. In particular, $\|u_n\|_{L^q}^q\to\cS$ as $n\to\ii$ and thus $u_n$ does not converge to zero in $L^q(\R^d)$. By Proposition \ref{prop:dicho-W1p} applied to $p=2$, there exist $(x_n)\subset\R^d$ and $u\in H^1(\R^d)\setminus\{0\}$ such that $v_n:=u_n(\cdot-x_n)$ converges weakly to $u$ in $H^1(\R^d)$. The sequence $(v_n)$ is also a maximizing sequence for $\cS$, and by the Rellich-Kondrachov theorem we can also assume that $v_n\to u$ almost everywhere. We can then apply Proposition \ref{prop:MMM-2q} in its Hilbert space version with $\cH=H^1(\R^d)$ to the sequence $(v_n)$ and the operator $A:f\in H^1(\R^d)\mapsto f\in L^q(\R^d)$ to deduce that $u$ is a maximizer for $\cS$ and that $(v_n)$ converges strongly in $H^1(\R^d)$. 
\end{proof}

A corollary of the above proof is that for any maximizing sequence $(u_n)$ for $\cS$, there exist $(x_n)\subset\R^d$ and a maximizer $u$ of $\cS$ such that $(u_n(\cdot-x_n))$ converges strongly to $u$ in $H^1(\R^d)$, up to a subsequence. In other words, maximizing sequences converge strongly up to translations. This powerful statement (in conjunction with the further determination of the set of maximizers) can be used in the context of the associated nonlinear Schr\"odinger equation, to obtain the stability of standing waves as showed by Cazenave and Lions \cite{CazLio-82} or to study the minimal mass blow-up solutions following Weinstein \cite{Weinstein-86} (see also Merle \cite{Merle-93}).

\subsection{Solutions to vector field equations}

We present now a problem where rearrangement techniques do not apply and one must use the second approach of the last subsection. In the article \cite{BreLie-84}, Br\'ezis and Lieb proved the existence of optimizers for 'vector field' problems of the type 
$$\cS=\sup\left\{ \int_{\R^d}G(u(x))\,dx\ :\ u\in\cC,\ \|\nabla u\|_{L^2}=1\right\},$$
where $\cC=\{u\in\dot{H}^1(\R^d,\R^N)\ :\ G(u)\in L^1(\R^d)\}$, $d\ge3$, $N\ge1$, and $G:\R^N\to\R$ is a non-zero continuous function satisfying some assumptions that we detail below. For general $G$, this problem has a more general structure than the ones covered by Proposition \ref{prop:MMM-2q}, but we will see that some ideas still apply here. Notice that the problem $\cS$ is invariant under translations, and that for $N\ge2$ the rearrangement techniques used above cannot be used. To break translation invariance, Br\'ezis and Lieb rathered appealed to Proposition \ref{prop:non-zero-weak-W1p}. To illustrate how to adapt the tools of Section \ref{sec:tools} in this more general setting, assume that $G$ satisfies the assumptions 
\begin{enumerate}
 \item $\forall u\in\R^N\setminus\{0\},\ G(u)>0$ and $G(0)=0$,
 \item $\lim_{|u|\to0}|u|^{-p} G(u)=0=\lim_{|u|\to\ii}|u|^{-p}G(u),\quad p=\frac{2d}{d-2}$,
 \item $\forall\delta>0,\ \exists C_\delta>0,\ \forall u,v\in\R^N,\ |G(u+v)-G(u)|\le\delta|u|^p+C_\delta|v|^p$.
\end{enumerate}
Notice that assumptions (1) and (2) together with Sobolev's embedding imply that $\cC=\dot{H}^1(\R^d,\R^N)$.

\begin{proposition}
 Under the above assumptions on $G$, there exists $u\in\cC$ with $\|\nabla u\|_{L^2}=1$ such that $\int G(u) = \cS$. 
\end{proposition}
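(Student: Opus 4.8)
The plan is to follow the roadmap laid out in Section~\ref{sec:tools}: extract a maximizing sequence $(u_n)$, use Proposition~\ref{prop:non-zero-weak-W1p} to break the translation invariance and produce a non-zero weak limit, and then run the Br\'ezis--Lieb argument from the proof of Proposition~\ref{prop:MMM-pq}, with Lemma~\ref{lem:bre-lie} replaced by an analogue adapted to the nonlinearity $G$. Two preliminary facts are needed. Assumption (2) together with the continuity of $G$ on compact annuli gives a constant $C>0$ with $G(u)\le C|u|^p$ for all $u\in\R^N$; since $p=\frac{2d}{d-2}$ is the critical Sobolev exponent and $d\ge3$, Sobolev's inequality $\|w\|_{L^p}\le C_{\rm S}\|\nabla w\|_{L^2}$ yields $\cS<+\ii$, while testing against any $w\in C^\ii_c(\R^d,\R^N)$ with $w\not\equiv0$ (for which $G(w)>0$ on the positive-measure set $\{w\neq0\}$ by assumption (1), which in particular forces $G\ge0$) gives $\cS>0$. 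Moreover the dilation $w\mapsto w(\cdot/\delta)$ leaves $\cS$ invariant and multiplies $\|\nabla w\|_{L^2}^2$ by $\delta^{d-2}$ and $\int G(w)$ by $\delta^d$; normalizing the gradient then gives the scaling bound $\int G(w)\le\cS\,\|\nabla w\|_{L^2}^{p}$ for every $w\in\dot H^1(\R^d,\R^N)$, where $p>2$.

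Let $(u_n)$ be a maximizing sequence, so $\|\nabla u_n\|_{L^2}=1$ and $\int G(u_n)\to\cS>0$. To apply Proposition~\ref{prop:non-zero-weak-W1p} (in its homogeneous version, with exponent $2$) one needs a uniform lower bound on a level set of $(u_n)$, which I would obtain by an argument in the spirit of the $pqr$ lemma: given $\eta>0$, assumption (2) provides $\epsilon\in(0,1)$ with $G(u)\le\eta|u|^p$ whenever $|u|\le\epsilon$ or $|u|\ge1/\epsilon$, so $\int_{\{|u_n|\le\epsilon\}\cup\{|u_n|\ge1/\epsilon\}}G(u_n)\le\eta C_{\rm S}^p$; fixing $\eta$ with $\eta C_{\rm S}^p\le\tfrac14\cS$ forces $\int_{\{\epsilon<|u_n|<1/\epsilon\}}G(u_n)\ge\tfrac14\cS$ for large $n$, and since $G\le M_\epsilon:=\max_{\epsilon\le|u|\le1/\epsilon}G(u)<\ii$ there, $|\{|u_n|>\epsilon\}|\ge\cS/(4M_\epsilon)=:\delta>0$ (for all $n$ after shrinking $\epsilon,\delta$). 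Proposition~\ref{prop:non-zero-weak-W1p} then gives $(x_n)\subset\R^d$ such that $v_n:=u_n(\cdot-x_n)\rightharpoonup u$ in $\dot H^1(\R^d,\R^N)$ with $u\neq0$; by translation invariance $(v_n)$ is again maximizing, and by Rellich--Kondrachov we may also assume $v_n\to u$ almost everywhere. Note that $u\neq0$ forces $\nabla u\neq0$, since $\dot H^1(\R^d)\hookrightarrow L^{2^\ast}(\R^d)$ for $d\ge3$ contains no non-zero constant.

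The heart of the proof is the Br\'ezis--Lieb splitting applied to $(v_n)$ with $w_n:=v_n-u$. For the gradient, $\nabla w_n\rightharpoonup0$ in $L^2$ kills the cross term, giving $1=\|\nabla v_n\|_{L^2}^2=\|\nabla u\|_{L^2}^2+\|\nabla w_n\|_{L^2}^2+o(1)$. For the energy, I claim $\int G(v_n)=\int G(u)+\int G(w_n)+o(1)$: indeed $G(v_n)-G(w_n)-G(u)\to0$ almost everywhere (continuity of $G$ and $G(0)=0$, using $w_n\to0$ a.e.), and assumption (3), applied with $w_n$ in the role of the first variable and $u$ as the perturbation, bounds $|G(v_n)-G(w_n)-G(u)|\le\epsilon|w_n|^p+C_\epsilon|u|^p+|G(u)|$ for every $\epsilon>0$; hence $\big(|G(v_n)-G(w_n)-G(u)|-\epsilon|w_n|^p\big)^+\le C_\epsilon|u|^p+|G(u)|\in L^1(\R^d)$, so dominated convergence sends its integral to $0$, and since $\sup_n\|w_n\|_{L^p}^p\le C_{\rm S}^p\sup_n\|\nabla w_n\|_{L^2}^p<\ii$ we may let $\epsilon\to0$. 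This Br\'ezis--Lieb-type identity for the non-homogeneous nonlinearity $G$ is the step I expect to be the main obstacle, and hypothesis (3) is exactly what is needed to push the standard truncation argument through.

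To conclude, feed the scaling bound (applied to $u$ and to $w_n$) into the two splittings:
$$\cS+o(1)=\int G(v_n)=\int G(u)+\int G(w_n)+o(1)\le\cS\big(\|\nabla u\|_{L^2}^{p}+\|\nabla w_n\|_{L^2}^{p}\big)+o(1).$$
Writing $a:=\|\nabla u\|_{L^2}^2$, one has $a\in(0,1]$ ($a>0$ since $\nabla u\neq0$, and $a\le1$ by weak lower semicontinuity) and $\|\nabla w_n\|_{L^2}^2\to1-a$, so in the limit $1\le a^{p/2}+(1-a)^{p/2}$. Since $p/2=\frac{d}{d-2}>1$ we also have $a^{p/2}+(1-a)^{p/2}\le a+(1-a)=1$, so equality holds, which forces $a^{p/2}=a$, i.e.\ $a\in\{0,1\}$; as $a>0$, this means $a=1$. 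Therefore $\|\nabla u\|_{L^2}=1$ and $\|\nabla w_n\|_{L^2}\to0$, whence $0\le\int G(w_n)\le\cS\|\nabla w_n\|_{L^2}^{p}\to0$ and $\int G(u)=\lim_n\int G(v_n)=\cS$. Since assumptions (1)--(2) give $\cC=\dot H^1(\R^d,\R^N)$, the function $u$ is the sought maximizer.
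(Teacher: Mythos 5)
Your proof is correct and follows essentially the same route as the paper: break translation invariance via Proposition~\ref{prop:non-zero-weak-W1p} after establishing a level-set bound from assumption~(2) and Sobolev, use Rellich--Kondrachov for a.e.\ convergence, run the Br\'ezis--Lieb splitting for both $\|\nabla\cdot\|_{L^2}^2$ and $\int G(\cdot)$, and feed the scaling bound $\int G(w)\le\cS\|\nabla w\|_{L^2}^{2d/(d-2)}$ into the strict-convexity argument from Proposition~\ref{prop:MMM-2q}. The only genuine difference is that where the paper simply cites the Br\'ezis--Lieb lemma for the nonlinearity $G$ from \cite[Theorem 2]{BreLie-83}, you reprove it directly from assumption~(3) via the standard truncation-and-dominated-convergence argument, which is a correct and self-contained way to fill in that step.
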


\begin{proof}
 Let $(u_n)$ be a maximizing sequence for $\cS$. Let $\eta>0$ and from assumption (2) on $G$, let $\epsilon>0$ such that $G(u)\le\eta|u|^p$ for all $|u|\le\epsilon$ or $|u|\ge1/\epsilon$. Hence, for $n$ large enough we have by Sobolev's embedding 
 \begin{align*}
  \cS/2 \le \int G(u_n) &= \int_{|u_n|\le\epsilon}G(u_n) + \int_{\epsilon<|u_n|<1/\epsilon}G(u_n)+\int_{|u_n|\ge1/\epsilon} G(u_n) \\
  &\le C\eta+\left(\max_{\epsilon\le|u|\le1/\epsilon}G(u)\right)|\{|u_n|>\epsilon\}|
 \end{align*}
 so that for $\eta$ small enough we have $|\{|u_n|>\epsilon\}|\ge\alpha$ for all $n$ for some $\alpha>0$ independent of $n$. Using Proposition \ref{prop:non-zero-weak-W1p}, we obtain $(x_n)\subset\R^d$ such that $v_n:=u_n(\cdot-x_n)$ converges weakly to some $v\in\dot{H}^1(\R^d,\R^N)\setminus\{0\}$, up to a subsequence. By the Rellich-Kondrachov theorem, we may also assume that $v_n$ converges to $v$ a.e. on $\R^d$. Notice that $(v_n)$ is still a maximizing sequence for $\cS$. Assumptions (1)-(2)-(3) on $v$ imply that we have a Br\'ezis-Lieb lemma for the function $G$ \cite[Theorem 2]{BreLie-83}, so that 
 $$\int G(v_n) = \int G(v) + \int G(v_n-v) + o_{n\to\ii}(1).$$
 One can then copy the proof of Proposition \ref{prop:MMM-2q} and the fact that for all $f\in\dot{H}^1(\R^d,\R^N)$ one has using scaling that 
 $$\int G(f) \le \cS \|\nabla f\|_{L^2}^{2d/(d-2)}$$
 to infer
 \begin{align*}
    \cS+o(1) = \int G(v_n) &= \int G(v) + \int G(v_n-v) + o(1) \\
    &\le\cS ( \|\nabla v\|_{L^2}^{2d/(d-2)}+\|\nabla(v_n-v)\|_{L^2}^{2d/(d-2)} ) + o(1)
 \end{align*}
 which imply in the limit $n\to\ii$ using the weak convergence of $(v_n)$ to $v$ that
 $$1\le \|\nabla v\|_{L^2}^{2d/(d-2)}+ \left(1-\|\nabla v\|_{L^2}^2\right)^{d/(d-2)}.$$
 Using that $d/(d-2)>1$ and $v\neq0$ we can conclude as in the proof of Proposition \ref{prop:MMM-2q} that $(v_n)$ converges strongly to $v$ in $\dot{H}^1$, so that $v$ is a maximizer for $\cS$.
\end{proof}

The above proof is a direct adaptation of the methods of Section \ref{sec:tools}, but in their article Br\'ezis and Lieb go actually way beyond by considering much more general assumptions on the functions $G$: 
\begin{enumerate}
 \item $G(0)=0$ and $\exists u_0\in\R^N, G(u_0)>0$;
 \item $\lim_{|u|\to0}|u|^{-p} G(u)\le0$, $\lim_{|u|\to\ii}|u|^{-p}G(u)\le0,\quad p=\frac{2d}{d-2}$,
 \item $\forall\delta>0,\ \exists C_\delta>0,\ \forall u,v\in\R^N,\ |G(u+v)-G(u)|\le\delta(|G(u)|+|u|^p)+C_\delta(|G(v)|+|v|^p+1)$.
\end{enumerate}
For such $G$, the above approach is too simplistic and one has to adapt it quite substantially. For instance, the function $G$ may change sign so that the set $\cC$ no longer coincides with $\dot{H}^1$. The assumption (1) implies that $\cS\in(0,+\ii)$. From assumption (2), one can thus still obtain the existence of a non-zero weak limit $v$ for maximizing sequences up to translations $(v_n)$ by the same method, and due to $\dot{H}^1$-boundedness one can still assume a.e. convergence as well. Fatou's lemma applied together with the assumption (2) on $G$ imply that $(G(v_n))$ is bounded in $L^1(\R^d)$ and that $G(v)\in L^1(\R^d)$, so that $v\in\cC$. The major difference in this more general setting is that the assumptions on $G$ are a priori too weak to ensure that we have a Br\'ezis-Lieb lemma for the function $G$ as before. There are two reasons for that: i) in Assumption (3), the function $|G(v)|+|v|^p+1$ on the right side is not integrable (due to the constant function) and ii) the quantity $\int|G(v_n-v)|$ may be unbounded since we only know $G(u)\le C|u|^p$ (and not $|G(u)|\le C|u|^p$). The key to bypass this issue is to prove a 'localized' Br\'ezis-Lieb lemma stating that for any compactly supported $\phi\in\dot{H}^1$ with $G(\phi)\in L^1$, one has 
$$\int G(v_n+\phi) = \int G(v_n) + \int G(v+\phi) - \int G(v) + o_{n\to\ii}(1),$$
so that when inserted into the bound $\int G(v_n+\phi) \le \cS \|\nabla (v_n+\phi)\|_{L^2}^{2d/(d-2)}$ one obtains in the limit $n\to\ii$
$$\cS + \int G(v+\phi) - \int G(v) \le \cS\left( 1 + \|\nabla (v+\phi)\|_{L^2}^2 - \|\nabla v\|_{L^2}^2\right)^{d/(d-2)}.$$
By a limiting argument, Br\'ezis and Lieb show that one can take $\phi(x)=v(\lambda x)-v(x)$ in this inequality for any $\lambda>0$, leading by rescaling to 
$$\cS + (\lambda^{-d}-1)\int G(v) \le \cS \left( 1 + (\lambda^{2-d}-1)\|\nabla v\|_{L^2}^2\right)^{d/(d-2)}.$$
Expanding this inequality close to $\lambda=1$ shows that $\int G(v)\ge \cS \|\nabla v\|_{L^2}^{2d/(d-2)}$ so that $\int G(v)= \cS \|\nabla v\|_{L^2}^{2d/(d-2)}$ and since $v\neq0$, we deduce that there exists a maximizer for $\cS$. Let us finally mention that Br\'ezis and Lieb are also able to deal with the critical case $d=2$, by even more involved methods.

\subsection{Sobolev inequality}

In \cite{Lieb-83b}, Lieb showed that his method also allowed to prove the existence of optimizers for the Sobolev embedding $\dot{H}^1(\R^d)\hookrightarrow L^{2d/(d-2)}(\R^d)$ for $d\ge3$. The idea is again to notice that one can look for optimizers in the set of radially symmetric decreasing functions, and that for such functions the inequality is equivalent to a one-dimensional Gagliardo-Nirenberg-Sobolev inequality for which Lieb proved the existence of optimizers as we mentioned above. Another way to prove this result is to notice that such optimizers are related to the optimizers of the HLS inequality in the special case $p=2$ and $\lambda=d-1$, since $(-\Delta)^{-1/2}$ is proportional to the convolution operator with $|\cdot|^{-(d-1)}$, so that the existence of optimizers for the Sobolev embedding follows from Theorem \ref{thm:HLS-pq} in this special case.  

We present here another approach not using rearrangement but still relying on Proposition \ref{prop:MMM-2q}, in the spirit of what we already presented about the GNS inequality. Again, one of the advantages of this approach is that it will provide a better description of optimizing sequences. Furthermore, it can be applied to any embedding $H^s(\R^d)\hookrightarrow L^{2d/(d-2s)}(\R^d)$, for which rearrangement techniques are not available.

Let $d\ge1$ and $s\in(0,d/2)$. Define $q:=2d/(d-2s)$ and 
$$\cS:=\sup\left\{ \int_{\R^d}|u|^q\ :\ u\in\dot{H}^s(\R^d),\ \|u\|_{\dot{H}^s}=1\right\}.$$

\begin{proposition}\label{prop:sharp-Sob}
 There exists $u\in\dot{H}^s(\R^d)$ such that $\|u\|_{\dot{H}^s}=1$ and $\|u\|_{L^q}^q=\cS$.
\end{proposition}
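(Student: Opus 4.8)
The plan is to mimic the alternative proof of Proposition~\ref{prop:sharp-GNS}, replacing the $H^1$-tools by their fractional analogues, since rearrangement is not available for $\dot H^s$ when $s$ is not an integer. Let $(u_n)\subset\dot H^s(\R^d)$ be a maximizing sequence for $\cS$, so that $\|u_n\|_{\dot H^s}=1$ and $\int_{\R^d}|u_n|^q\to\cS$. In particular $(u_n)$ does not converge to $0$ in $L^q(\R^d)$, so there is $\alpha>0$ such that, along a subsequence, $\|u_n\|_{L^q}\ge\alpha$ for all $n$. The first step is to produce a non-zero weak limit up to translations, i.e.\ the fractional analogue of Proposition~\ref{prop:dicho-W1p}: there exist $(x_n)\subset\R^d$ and $u\in\dot H^s(\R^d)\setminus\{0\}$ such that $v_n:=u_n(\cdot-x_n)$ converges weakly to $u$ in $\dot H^s(\R^d)$ (along a further subsequence). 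This is standard: one decomposes $\R^d$ into unit cubes, uses the fractional Sobolev embedding $H^s(Q)\hookrightarrow L^q(Q)$ on each cube $Q$ together with interpolation to show that the $L^q$-mass cannot be spread arbitrarily thin without some cube carrying a definite amount of mass, then translates so that cube to the origin; boundedness in $\dot H^s$ gives weak convergence of a subsequence, and the lower bound on the local $L^q$-norm (which passes to the limit by the local compactness of $H^s(Q)\hookrightarrow L^r(Q)$ for $r<q$, combined with a uniform bound in a slightly higher $L^{q^*}$-norm on cubes) forces $u\neq0$.

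The second step is to upgrade to almost-everywhere convergence: since $(v_n)$ is bounded in $\dot H^s(\R^d)$ and (after localizing) in $H^s$ on every ball, the Rellich--Kondrachov theorem for fractional Sobolev spaces gives, up to a further subsequence, $v_n\to u$ a.e.\ on $\R^d$. The sequence $(v_n)$ is again maximizing for $\cS$ by translation invariance. At this point the hypotheses of Proposition~\ref{prop:MMM-2q} (in its Hilbert-space version, with $\cH=\dot H^s(\R^d)$) are met: take $A\colon f\in\dot H^s(\R^d)\mapsto f\in L^q(\R^d)$, which is bounded and non-zero by the fractional Sobolev inequality, $\|v_n\|_{\dot H^s}=1$, $\|Av_n\|_{L^q}^q\to\cS$, $v_n\rightharpoonup u$ in $\dot H^s$, $u\neq0$, and $Av_n\to Au$ a.e.\ (this last assumption is exactly the a.e.\ convergence just obtained). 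Proposition~\ref{prop:MMM-2q} then yields that $(v_n)$ converges strongly in $\dot H^s(\R^d)$ to $u$ and that $\|u\|_{L^q}^q=\cS$; in particular $\|u\|_{\dot H^s}=1$, so $u$ is the desired optimizer.

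The main obstacle is the first step, namely establishing the fractional version of Proposition~\ref{prop:non-zero-weak-W1p} (or, equivalently, of Proposition~\ref{prop:dicho-W1p}) for $\dot H^s$ with non-integer $s$. Br\'ezis's cube-localization proof of Proposition~\ref{prop:non-zero-weak-W1p} uses that cutting off a $W^{1,p}$ function by a smooth partition of unity controls the gradient of the pieces, which is immediate for first-order derivatives but requires more care for the fractional seminorm $[\,\cdot\,]_{\dot H^s}$, where localization is non-local; one handles this either via the Gagliardo double-integral representation of $[\,\cdot\,]_{\dot H^s}$ (so that cutting off only perturbs the seminorm by a controlled amount plus a lower-order $L^2$-type error) or via a Littlewood--Paley/Besov description combined with the embedding $\dot H^s\hookrightarrow \dot B^{0}_{q,2}\hookrightarrow L^q$. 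Either route is routine but technical, and I would state it as a lemma and refer to the fractional Sobolev literature rather than reproduce it in full. The remaining ingredients (fractional Rellich--Kondrachov, fractional Sobolev inequality, and Proposition~\ref{prop:MMM-2q} itself) are available off the shelf.
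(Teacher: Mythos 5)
Your second and third steps (upgrading to a.e.\ convergence via local Rellich--Kondrachov and then invoking the Hilbert-space version of Proposition~\ref{prop:MMM-2q} with $\cH=\dot H^s(\R^d)$) match the paper, but the first step contains a genuine gap that makes the proposed argument fail. You seek a non-zero weak limit \emph{up to translations only}, by a cube-localization argument modeled on Proposition~\ref{prop:dicho-W1p} and the $pqr$ lemma. That strategy is tailored to \emph{subcritical} embeddings and cannot work here, because the exponent $q=2d/(d-2s)$ is the critical one: the problem is invariant under the non-compact dilations $u\mapsto\delta^{(d-2s)/2}u(\delta\cdot)$ as well as under translations, and no amount of translating can restore compactness when a sequence concentrates at a point. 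Concretely, if $u^*$ is any fixed nonzero function in $\dot H^s$ normalized so that $\|u^*\|_{\dot H^s}=1$, and one sets $u_n(x):=\delta_n^{(d-2s)/2}u^*(\delta_n x)$ with $\delta_n\to\infty$, then $(u_n)$ has the same $\dot H^s$- and $L^q$-norms as $u^*$ for every $n$ (so it can be taken to be a maximizing sequence once an optimizer exists, or more to the point, any maximizing sequence can be rescaled to behave this way), yet $u_n(\cdot-x_n)\rightharpoonup 0$ in $\dot H^s$ for \emph{every} choice of translations $(x_n)$. Your cube-localization step cannot exclude this: the ``uniform bound in a slightly higher $L^{q^*}$-norm on cubes'' does not exist, because $q$ is already the top of the Sobolev scale ($\dot H^s(\R^d)\hookrightarrow L^r$ only for $r\le q$), and the local embedding $H^s(Q)\hookrightarrow L^q(Q)$ at the critical exponent is precisely \emph{not} compact, so the local $L^q$-mass does not pass to the limit. (Indeed the $pqr$ lemma requires boundedness in some $L^r$ with $r>q$, which you do not have.)

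The paper's proof avoids this by using the refined Sobolev inequality of G\'erard--Meyer--Oru (Proposition~\ref{prop:refined-Sob}), whose right-hand side involves $\sup_{t>0}t^{(d-2s)/4}\|\chi(-t\Delta)u\|_{L^\infty}$: the supremum over $t$ detects the \emph{scale} at which mass lives, and the $L^\infty$-norm detects the \emph{position}. A maximizing sequence $(u_n)$ then yields parameters $(t_n,x_n)$ with $t_n^{(d-2s)/4}|(\chi(-t_n\Delta)u_n)(x_n)|\ge c/2$, i.e.\ a lower bound $|\langle g,t_n^{(d-2s)/2}u_n(t_n^{1/2}(\cdot-x_n))\rangle_{L^2}|\ge (2\pi)^{d/2}c/2$ with $g=\cF^{-1}(\chi(|\cdot|^2))$, which forces the simultaneously rescaled and translated sequence to have a non-zero weak limit. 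So the essential point you are missing is that in the critical case you must rescale as well as translate, and the tool that makes this possible is the $L^\infty$-based refined inequality with a scale parameter, not a cube decomposition. Once that substitution is made, the rest of your argument (a.e.\ convergence by Rellich--Kondrachov and the Br\'ezis--Lieb step packaged in Proposition~\ref{prop:MMM-2q}) goes through as you wrote it.
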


The proof is the same as the alternative proof of Proposition \ref{prop:sharp-GNS} using Proposition \ref{prop:MMM-2q}. Again, Assumption (5) follows from the Rellich-Kondrachov theorem so that all boils down to finding a maximizing sequence for $\cS$ which has a non-zero weak limit. While for the GNS inequality, we saw that the main enemy was the invariance by translations, here we will see that we have to deal with both translations and dilations. The main tool to break these invariances is the following result. 

\begin{proposition}[Refined Sobolev inequality \cite{GerMeyOru-97}]\label{prop:refined-Sob}
Let $\chi\in C^\ii_0(0,+\ii)$ be such that $\chi\equiv1$ in a neighborhood of $0$. Then, there exist $C>0$ and $\theta\in(0,1)$ such that for any $u\in\dot{H}^s(\R^d)$ one has
$$\|u\|_{L^{q}} \le C \Big(\sup_{t>0}t^{(d-2s)/4}\|\chi(-t\Delta)u\|_{L^\ii} \Big)^\theta\|u\|_{\dot{H}^s}^{1-\theta}.$$
\end{proposition}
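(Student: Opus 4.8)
The plan is to interpolate between the Sobolev estimate and an $L^\infty$-type estimate after decomposing $u$ into Littlewood--Paley pieces. First I would set up a Littlewood--Paley (or, equivalently, a continuous Paley--Littlewood via the family $\chi(-t\Delta)$) decomposition $u=\sum_{j\in\Z}P_j u$, where $P_j$ projects onto frequencies $|\xi|\sim 2^j$, and recall the Bernstein-type inequalities: $\|P_j u\|_{L^q}\lesssim 2^{jd(1/2-1/q)}\|P_j u\|_{L^2}$ and more generally $\|P_j u\|_{L^\infty}\lesssim 2^{jd/2}\|P_j u\|_{L^2}$. Since $q=2d/(d-2s)$, one has $d(1/2-1/q)=s$, so $\|P_j u\|_{L^q}\lesssim 2^{js}\|P_j u\|_{L^2}$, i.e.\ the frequency-localized Sobolev embedding is an equality of scaling up to constants.

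The key step is to bound the full $L^q$-norm. By the (nonlinear) embedding $\ell^2\hookrightarrow\ell^q$ of the Littlewood--Paley square function, or directly by summing, $\|u\|_{L^q}^q \lesssim \sum_j \|P_j u\|_{L^q}^q$. I would then split each factor $\|P_j u\|_{L^q}$ using H\"older in the exponent: $\|P_j u\|_{L^q}\le \|P_j u\|_{L^2}^{2/q}\|P_j u\|_{L^\infty}^{1-2/q}$. This gives
\begin{align*}
 \|u\|_{L^q}^q &\lesssim \sum_j \|P_j u\|_{L^2}^{2}\,\|P_j u\|_{L^\infty}^{q-2} \\
 &\le \Big(\sup_j \|P_j u\|_{L^\infty}^{q-2}\Big) \sum_j \|P_j u\|_{L^2}^{2}.
\end{align*}
Now $\sum_j \|P_j u\|_{L^2}^2 = \|u\|_{L^2}^2$ in the inhomogeneous case; in the homogeneous setting one instead inserts the weight $2^{2js}$ to recover $\|u\|_{\dot H^s}^2$. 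More precisely, I would write $\|P_j u\|_{L^2}^2 = 2^{-2js}\cdot 2^{2js}\|P_j u\|_{L^2}^2$ and bound $2^{-2js}$ by distributing it against the high- and low-frequency tails, so that $\sum_j 2^{-2js}\,2^{2js}\|P_j u\|_{L^2}^2$ is controlled by $\|u\|_{\dot H^s}^2$ times a harmless constant only after one recognizes that the true interpolation must balance the scaling; the cleanest route is to instead keep the exponents generic and optimize. Setting $M:=\sup_{t>0}t^{(d-2s)/4}\|\chi(-t\Delta)u\|_{L^\infty}$ (note $\|\chi(-t\Delta)u\|_{L^\infty}$ with $t\sim 2^{-2j}$ dominates $\|P_{\le j}u\|_{L^\infty}$ hence $\|P_j u\|_{L^\infty}$ up to constants, and $t^{(d-2s)/4}=2^{-j(d-2s)/2}$ has exactly the scaling of $2^{-js}\cdot(\text{the }\ell^2\!\to\!\ell^\infty\text{ balance})$), one arranges the powers so that $\|u\|_{L^q}^q\lesssim M^{q-2}\|u\|_{\dot H^s}^2$. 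Taking $q$-th roots, $\theta=(q-2)/q\in(0,1)$ and $1-\theta=2/q$, which is the claimed inequality; one checks $\theta\in(0,1)$ since $q>2$.

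The main obstacle is the bookkeeping of scaling exponents: one must verify that the weight $t^{(d-2s)/4}$ in the definition of $M$ is precisely the one that makes the supremum over $j$ of $2^{-j(d-2s)/2}\|P_j u\|_{L^\infty}^{?}$ factor out correctly and leaves exactly $\|u\|_{\dot H^s}^2$ behind, rather than some other homogeneous Sobolev norm; this forces the exponent split $q=2+(q-2)$ to be the only admissible one, and is why the statement is sharp in the power $\theta$. A secondary technical point is that $\chi(-t\Delta)$ is a smooth low-pass projector rather than a sharp one, so I would justify $\|P_j u\|_{L^\infty}\lesssim \sup_{t\sim 2^{-2j}}\|\chi(-t\Delta)u\|_{L^\infty}$ by writing $P_j=P_j\big(\chi(-t\Delta)-\chi(-4t\Delta)\big)$-type identities (a telescoping using two scales), using that the kernel of $\chi(-t\Delta)$ is an $L^1$-normalized approximate identity at scale $\sqrt t$. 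Once these scaling identities are pinned down, the remaining estimates are the standard Bernstein and square-function bounds and the elementary H\"older interpolation already used above.
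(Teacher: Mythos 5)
There is a genuine gap: the step
$$\|u\|_{L^q}^q\lesssim\sum_j\|P_ju\|_{L^q}^q$$
goes the \emph{wrong way} for $q>2$. The square function estimate gives $\|u\|_{L^q}\sim\bigl\|\bigl(\sum_j|P_ju|^2\bigr)^{1/2}\bigr\|_{L^q}$, and since $t\mapsto t^{q/2}$ is superadditive for $q\ge2$, one has pointwise $\bigl(\sum_j|P_ju|^2\bigr)^{q/2}\ge\sum_j|P_ju|^q$; integrating yields $\|u\|_{L^q}^q\gtrsim\sum_j\|P_ju\|_{L^q}^q$, which is the \emph{reverse} of what you claim (this is the $\ell^2\hookrightarrow\ell^q$ embedding applied correctly, and it points the other way for $q\ge2$). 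A simple example with two frequency blocks already shows equality fails and the sum on the right is smaller. Your triangle-inequality variant $\|u\|_{L^q}\le\sum_j\|P_ju\|_{L^q}$ also does not close: running your exponent bookkeeping (which is correct — the powers of $2^j$ cancel exactly with $q=2d/(d-2s)$) leaves $\sum_j(2^{js}\|P_ju\|_{L^2})^{2/q}$, an $\ell^{2/q}$ quantity with $2/q<1$, which is \emph{not} bounded by $\|u\|_{\dot H^s}^{2/q}$.

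The way out — and what the paper actually does, following Chemin--Xu — is to pass through the distribution function. Write $\|u\|_{L^q}^q=q\int_0^\infty|\{|u|>a\}|a^{q-1}\,da$, and for each threshold $a$ choose the scale $\beta=\beta_a$ so that $\|\chi(-\beta\Delta)u\|_{L^\infty}\le a/2$; then $|\{|u|>a\}|\le|\{|(1-\chi(-\beta\Delta))u|>a/2\}|\lesssim a^{-2}\|(1-\chi(-\beta\Delta))u\|_{L^2}^2$ by Chebyshev. Integrating in $a$ and using the relation between $\beta_a$ and $a$ produces exactly $c_0^{q-2}\|u\|_{\dot H^s}^2$ where $c_0$ is the $\sup_t$ quantity. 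The layer-cake step is what lets you trade the $\ell^{2/q}$ (or $\ell^q$) sum you get from naive square-function/Bernstein arguments for the genuine $\ell^2$ sum $\|u\|_{\dot H^s}^2$; it is not a cosmetic rewriting but the essential device. The rest of your proposal — identifying $M$ with $\sup_j 2^{-j(d-2s)/2}\|P_ju\|_{L^\infty}$ via $P_j=P_j\chi(-t\Delta)$ for $t\sim2^{-2j}$, the Bernstein bounds, and the resulting $\theta=(q-2)/q$ — is sound and matches what the paper obtains, so only the summing step needs replacing.
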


This is called a refined inequality because it implies the Sobolev inequality; indeed we have for all $t>0$
 \begin{align*}
    \|e^{t\Delta}u\|_{L^\ii} &\le (2\pi)^{-d/2}\|\cF(\chi(-t\Delta)u)\|_{L^1} \\
    &= c\int_{\R^d}|\chi(t|\xi|^2)\hat{u}(\xi)|\,d\xi\\
    &\le c'\left(\int_{\R^d}\frac{|\chi(t|\xi|^2)|^2}{|\xi|^{2s}}\,d\xi\right)^{1/2}\|u\|_{\dot{H}^s}\\
    &= c't^{-(d-2s)/4}\left(\int_{\R^d}\frac{|\chi(|\xi|^2)|^2}{|\xi|^{2s}}\,d\xi\right)^{1/2}\|u\|_{\dot{H}^s}.
 \end{align*}

\begin{proof}[Proof of Proposition \ref{prop:refined-Sob}]
As stated by G\'erard in \cite{Gerard-98}, one can use the method of Chemin and Xu \cite{CheXu-97} to obtain refined inequalities. Let thus $u\in\dot{H}^s(\R^d)$. We have 
$$\|u\|_{L^q}^q=q\int_0^\ii|\{|u|>a\}|a^{q-1}\,da.$$
For any fixed $a>0$, choosing $\beta=\beta_a>0$ such that 
$$c_0\beta^{-(d-2s)/4} = a/2,\quad c_0:=\sup_{t>0}t^{(d-2s)/4}\|\chi(-t\Delta)u\|_{L^\ii},$$
we have $\|\chi(-\beta\Delta)u\|_{L^\ii}\le a/2$. We deduce 
\begin{align*}
|\{|u|>a\}| &\le |\{|\chi(-\beta\Delta)u|>a/2\}|+|\{|(1-\chi(-\beta\Delta))u|>a/2\}|\\
&=|\{|(1-\chi(-\beta\Delta))u|>a/2\}| \\
&\le \frac{4\|(1-\chi(-\beta\Delta))u\|_{L^2}^2}{a^2}.
\end{align*}
As a consequence, using the relation between $\beta_a$ and $a$,
\begin{align*}
 \|u\|_{L^q}^q &\le 4q\int_{\R^d}|\hat{u}(\xi)|^2\int_0^\ii \left(1-\chi(\beta_a|\xi|^2)\right)^2 a^{q-3}\,da\,d\xi \\
 &= 4q\int_{\R^d}|\hat{u}(\xi)|^2\int_0^\ii \left(1-\chi((a/(2c_0))^{-4/(d-2s)}|\xi|^2)\right)^2 a^{q-3}\,da\,d\xi \\
 &= 2d(2c_0)^{q-2}\int_0^\ii \left(1-\chi(b)\right)^2 b^{-s-1}\,db\int_{\R^d}|\xi|^{(q-2)(d-2s)/2}|\hat{u}(\xi)|^2\,d\xi,
\end{align*}
which proves the result since $(q-2)(d-2s)/2=2s$.
\end{proof}

Let us now explain why the refined Sobolev inequality allows to break symmetries to find a non-zero weak limit (this argument is implicit in \cite{Gerard-98}; see also \cite{KilVis-book}). Indeed, let $(u_n)$ be a maximizing sequence for $\cS$. Since $\|u_n\|_{L^{q}}^{q}\to\cS\neq0$ as $n\to\ii$, Proposition \ref{prop:refined-Sob} implies that there exists $c>0$ such that for all $n$,
$$\sup_{t>0}t^{(d-2s)/4}\|\chi(-t\Delta)u_n\|_{L^\ii}\ge c,$$
hence for all $n$ there exist $t_n>0$ and $x_n\in\R^d$ such that 
$$t_n^{(d-2s)/4}|(\chi(-t_n\Delta)u_n)(x_n)|=(2\pi)^{-d/2}t_n^{(d-2s)/4}\left|\int_{\R^d}\chi(t_n|\xi|^2)e^{-ix_n\cdot\xi}\hat{u_n}(\xi)\,d\xi\right|\ge c/2.$$
Defining $g=\cF^{-1}(\chi(|\cdot|^2))\in L^2(\R^d)$, this implies that for all $n$,
$$|\langle g,t_n^{(d-2s)/2}u_n(t_n^{1/2}(\cdot-x_n))\rangle_{L^2}|\ge (2\pi)^{d/2}c/2,$$
and hence the sequence $(t_n^{(d-2s)/2}u_n(t_n^{1/2}(\cdot-x_n)))$, which is still a maximizing sequence for $\cS$ and as such converges weakly up to a subsequence, has a weak limit $v\neq0$ (since $|\langle g,v\rangle_{L^2}|\ge(2\pi)^{d/2}c/2>0$). Notice that this approach shows that any maximizing sequence for $\cS$ converges strongly in $\dot{H}^s$ up to translations and dilations (up to a subsequence), a result that is originally due to Lions \cite[Theorem I.1]{Lions-85}. Arguments closely related to those described here were applied in the setting of the Heisenberg group in \cite[Proposition 4.3]{FraLie-12}. 

\subsection{Generalized Gagliardo-Nirenberg-Sobolev inequality}

The previous approach can also be used for the subcritical embeddings $H^s(\R^d)\hookrightarrow L^q(\R^d)$ with $q\in(2,2d/(d-2s))$. We already treated the case $s=1$ with either rearrangement methods or Proposition \ref{prop:non-zero-weak-W1p}. For general $s$, rearrangement cannot be used in the same way (since the inequality $\|f\|_{\dot{H}^s} \ge \|f^*\|_{\dot{H}^s}$ is expected to fail for general $s$) so we explain how to adapt Proposition \ref{prop:non-zero-weak-W1p} in this case. This strategy was used in \cite{BelFraVis-14}.

Let $d\ge1$, $s\in(0,d/2)$, and $2<q<2d/(d-2s)$. Define 
$$\cS:=\sup\left\{\int_{\R^d}|u|^q\ :\ u\in H^s(\R^d),\ \|u\|_{H^s}=1\right\}.$$

\begin{proposition}\label{prop:exist-GNS}
 There exists $u\in H^s(\R^d)$ with $\|u\|_{H^s}=1$ such that $\|u\|_{L^q}^q=\cS$. 
\end{proposition}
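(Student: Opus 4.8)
The plan is to mimic the alternative proof of Proposition~\ref{prop:sharp-GNS}, replacing the role of Proposition~\ref{prop:non-zero-weak-W1p} (which is stated for $W^{1,p}$) by an analogous statement adapted to the space $H^s(\R^d)$. Concretely, I would first take a maximizing sequence $(u_n)\subset H^s(\R^d)$ for $\cS$; since $\|u_n\|_{L^q}^q\to\cS\neq0$, the sequence does not converge to zero in $L^q(\R^d)$, so up to a subsequence there is $\alpha>0$ with $\|u_n\|_{L^q}\ge\alpha$ for all $n$. Because $(u_n)$ is bounded in $H^s(\R^d)$, Sobolev's embedding gives $q^*\in(q,2d/(d-2s)]$ such that $(u_n)$ is bounded in $L^2(\R^d)\cap L^{q^*}(\R^d)$, and the $pqr$ lemma (applied with exponents $2<q<q^*$) yields $\delta,\epsilon>0$ with $|\{|u_n|>\epsilon\}|\ge\delta$ for all $n$.

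The main step, and the one where the $H^s$-setting genuinely differs from the $W^{1,p}$-setting, is to deduce from this level-set bound the existence of translates with a non-zero weak limit. Here the cleanest route is to invoke the refined Sobolev-type inequality in the subcritical regime, as in \cite{BelFraVis-14}: one shows that there is a fixed $g\in L^2(\R^d)$ (a suitable frequency-localized bump, as in the proof surrounding Proposition~\ref{prop:refined-Sob}) such that $\|u\|_{L^q}$ is controlled by a product of $\|u\|_{H^s}$ with a supremum over translates and dilates of $|\langle g_{t,x},u\rangle|$; the key point in the \emph{subcritical} case is that only translations, not dilations, actually need to be broken, because the level-set bound $|\{|u_n|>\epsilon\}|\ge\delta$ together with $H^s$-boundedness already pins down the relevant frequency scale. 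Concretely I would argue as follows: the bound on $|\{|u_n|>\epsilon\}|$ together with boundedness in $L^2$ forces $\|u_n\|_{L^2}\ge \epsilon\delta^{1/2}$, so $\epsilon\delta^{1/2}\le\|u_n\|_{L^2}=\|\widehat{u_n}\|_{L^2}$; splitting the frequency integral at $|\xi|\le R$ and $|\xi|>R$ and using $H^s$-boundedness on the high-frequency part, one finds $R_0>0$ such that $\int_{|\xi|\le R_0}|\widehat{u_n}|^2\ge c>0$ for all $n$. Hence $\|\varphi_{R_0} * u_n\|_{L^2}\ge c'$ where $\varphi_{R_0}=\cF^{-1}(\1_{|\xi|\le R_0})$ (smoothed if one prefers $L^1$ kernels); since $\varphi_{R_0}*u_n$ is bounded in $L^2$ and in $L^\ii$ (again by $H^s$-boundedness), there exist $x_n\in\R^d$ with $|(\varphi_{R_0}*u_n)(x_n)|\ge c''>0$, i.e.\ $|\langle \varphi_{R_0}(x_n-\cdot),u_n\rangle|\ge c''$. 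Translating, $v_n:=u_n(\cdot-x_n)$ satisfies $|\langle \varphi_{R_0},v_n\rangle|\ge c''$, so any weak-$H^s$ limit $v$ of a subsequence of $(v_n)$ obeys $|\langle\varphi_{R_0},v\rangle|\ge c''>0$ and is therefore non-zero.

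Once the non-zero weak limit $v\in H^s(\R^d)\setminus\{0\}$ of (a subsequence of) $v_n=u_n(\cdot-x_n)$ is secured, the rest is exactly the alternative proof of Proposition~\ref{prop:sharp-GNS}: $(v_n)$ is still maximizing for $\cS$, by the Rellich--Kondrachov theorem (valid for $H^s$, $s>0$, on bounded domains, hence after a diagonal extraction $v_n\to v$ almost everywhere) Assumption~(5) of Proposition~\ref{prop:MMM-2q} holds for the compact-into-$L^q$-on-bounded-sets embedding operator $A:f\in H^s(\R^d)\mapsto f\in L^q(\R^d)$, and Assumptions~(1)--(3) are immediate (with $\cH=H^s(\R^d)$). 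Proposition~\ref{prop:MMM-2q} in its Hilbert space form then gives that $v$ is a maximizer with $\|v\|_{H^s}=1$ and that $(v_n)$ converges strongly in $H^s(\R^d)$. I expect the only genuine obstacle to be the second paragraph — extracting the non-zero weak limit without a ready-made $H^s$-analogue of Proposition~\ref{prop:non-zero-weak-W1p} — and the cleanest presentation is probably to isolate that step as an $H^s$-version of Proposition~\ref{prop:non-zero-weak-W1p}, proved by the frequency-localization argument sketched above (this is essentially the content of the refined inequality of \cite{BelFraVis-14} restricted to the subcritical range).
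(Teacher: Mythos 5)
Your overall strategy---extract a level-set bound from the $pqr$ lemma, isolate a fixed low-frequency scale from $H^s$-boundedness, then translate to catch a non-zero weak limit---is sound and close in spirit to the paper's, but the key step has a genuine gap. You claim that because $\varphi_{R_0}*u_n$ is bounded in $L^2$ and in $L^\ii$ and satisfies $\|\varphi_{R_0}*u_n\|_{L^2}\ge c'$, there must exist points $x_n$ with $|(\varphi_{R_0}*u_n)(x_n)|\ge c''>0$. This implication is false: a function whose Fourier support lies in a fixed ball can have $L^2$-norm bounded below while its $L^\ii$-norm tends to zero, simply by spreading out. For instance $g_n(x)=n^{-d/2}\varphi_{R_0}(x/n)$ has Fourier support in $B(0,R_0/n)\subset B(0,R_0)$ (hence $\varphi_{R_0}*g_n=c\,g_n$), $\|g_n\|_{L^2}$ constant, but $\|g_n\|_{L^\ii}\to0$. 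The problem is that you discarded information when you replaced the level-set bound $|\{|u_n|>\epsilon\}|\ge\delta$ by the weaker $L^2$ lower bound $\|u_n\|_{L^2}\ge\epsilon\sqrt{\delta}$.

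The fix is quick and does produce the $H^s$-analogue of Proposition~\ref{prop:non-zero-weak-W1p} you were after: carry the level-set bound through the frequency cutoff instead of throwing it away. Since $\|(1-\varphi_{R_0})*u_n\|_{L^2}\le CR_0^{-s}\|u_n\|_{\dot{H}^s}$ can be made as small as one wishes by taking $R_0$ large (uniformly in $n$), Chebyshev gives $|\{|(1-\varphi_{R_0})*u_n|>\epsilon/2\}|\le\delta/2$ for $R_0$ large, hence $|\{|\varphi_{R_0}*u_n|>\epsilon/2\}|\ge\delta/2$ and in particular $\|\varphi_{R_0}*u_n\|_{L^\ii}\ge\epsilon/2$; choosing $x_n$ with $|(\varphi_{R_0}*u_n)(x_n)|\ge\epsilon/4$ you may then finish as you intended. (Alternatively, lower-bound $\|\varphi_{R_0}*u_n\|_{L^q}$ directly using $H^{s''}\hookrightarrow L^q$ for some $s''\in(s',s)$ to kill the high frequencies, then interpolate between $L^2$ and $L^\ii$.) With this repaired, your argument is a legitimate and somewhat more elementary alternative to the paper's proof, which instead applies the refined inequality of Proposition~\ref{prop:refined-GNS} and observes that, by subcriticality, the dilation parameter $t$ there is automatically confined to a compact interval $[t_-,t_+]$, so that only translations need to be broken. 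Both proofs exploit the same mechanism---in the subcritical regime, $H^s$-boundedness pins down the frequency scale---but yours isolates it as a $pqr$-plus-frequency-cutoff lemma rather than packaging it inside a refined inequality.
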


As in the previous arguments, it is enough to find a non-zero weak limit for some maximizing sequences, which we again do using a refined version of the inequality. 

\begin{proposition}\label{prop:refined-GNS}
 Define $s'\in(0,s)$ such that $q=2d/(d-2s')$. Let $\chi\in C^\ii_0(0,+\ii)$ be such that $\chi\equiv1$ in a neighborhood of $0$. Then, there exists $C>0$ such that for all $u\in H^s(\R^d)$ we have 
 \begin{equation}\label{eq:refined-GNS}
    \|u\|_{L^q}\le C\Big(\sup_{t>0}t^{(d-2s')/4}\|\chi(-t\Delta)u\|_{L^\ii}\Big)^\theta\|u\|_{H^s}^{1-\theta}. 
 \end{equation}
\end{proposition}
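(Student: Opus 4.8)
The plan is to mimic the proof of the refined Sobolev inequality (Proposition~\ref{prop:refined-Sob}) but to exploit the extra $L^2$-control coming from the subcritical gap $q<2d/(d-2s)$, which is precisely what distinguishes $H^s$ from $\dot H^s$. First I would write, exactly as before, the layer-cake formula $\|u\|_{L^q}^q=q\int_0^\ii|\{|u|>a\}|a^{q-1}\,da$, and for each $a>0$ choose $\beta=\beta_a>0$ so that $c_0\beta^{-(d-2s')/4}=a/2$, where $c_0:=\sup_{t>0}t^{(d-2s')/4}\|\chi(-t\Delta)u\|_{L^\ii}$; this guarantees $\|\chi(-\beta_a\Delta)u\|_{L^\ii}\le a/2$. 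Splitting $u=\chi(-\beta_a\Delta)u+(1-\chi(-\beta_a\Delta))u$ and using Chebyshev on the high-frequency part gives $|\{|u|>a\}|\le 4a^{-2}\|(1-\chi(-\beta_a\Delta))u\|_{L^2}^2$, as in the critical case.

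Next I would insert this bound and carry out the same change of variables $b=(a/(2c_0))^{-4/(d-2s')}|\xi|^2$ on the Fourier side. The only difference with the critical computation is that the resulting exponent on $|\xi|$ is now $(q-2)(d-2s')/2=2s'$ rather than $2s$, so one is left with $\|u\|_{L^q}^q\le C c_0^{q-2}\int_{\R^d}|\xi|^{2s'}|\hat u(\xi)|^2\,d\xi = C c_0^{q-2}\|u\|_{\dot H^{s'}}^2$, provided the $b$-integral $\int_0^\ii(1-\chi(b))^2 b^{-s'-1}\,db$ converges; it does because $1-\chi$ vanishes near $0$ and is bounded, and $s'>0$. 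So far this reproduces the $\dot H^{s'}$ refined inequality. The remaining, genuinely new, step is to interpolate $\|u\|_{\dot H^{s'}}^2\le\|u\|_{\dot H^{s}}^{2s'/s}\|u\|_{L^2}^{2(1-s'/s)}$ (Plancherel plus Hölder in $\xi$, using $0<s'<s$), and then bound both $\|u\|_{\dot H^s}$ and $\|u\|_{L^2}$ by $\|u\|_{H^s}$, which turns $c_0^{q-2}\|u\|_{\dot H^{s'}}^2$ into $c_0^{q-2}\|u\|_{H^s}^{2}$; taking $q$-th roots produces \eqref{eq:refined-GNS} with $\theta=(q-2)/q\in(0,1)$.

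The step I expect to require the most care is checking the convergence and rescaling of the $b$-integral together with tracking that the final exponent really is $\theta=(q-2)/q$ and lies in $(0,1)$: one must verify $2<q$ (so $\theta>0$) and $q<2d/(d-2s')$ — but here $q=2d/(d-2s')$ by the \emph{definition} of $s'$, so in fact $\theta$ comes out of the homogeneity count $q\cdot 1=\theta\cdot(\text{degree }0\text{ in }u\text{ of the sup term})+\cdots$ automatically, and the only thing to confirm is $0<s'<s$, i.e. that $q<2d/(d-2s)$, which is the subcriticality hypothesis. A secondary point is that the argument should be run for $u\in H^s\cap L^\ii$ (so that $c_0<\ii$ and the layer-cake manipulations are licit) and then extended to all $u\in H^s(\R^d)$ by density, exactly as the statement in Proposition~\ref{prop:refined-Sob} is used in practice; since $H^s\hookrightarrow L^q$ and the right-hand side is continuous in the $H^s$ norm this causes no difficulty.

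\begin{proof}[Proof of Proposition~\ref{prop:refined-GNS}]
 By density it suffices to treat $u\in H^s(\R^d)\cap L^\ii(\R^d)$, so that $c_0:=\sup_{t>0}t^{(d-2s')/4}\|\chi(-t\Delta)u\|_{L^\ii}$ is finite. As in the proof of Proposition~\ref{prop:refined-Sob}, write
 $$\|u\|_{L^q}^q=q\int_0^\ii|\{|u|>a\}|a^{q-1}\,da,$$
 and for each $a>0$ pick $\beta_a>0$ with $c_0\beta_a^{-(d-2s')/4}=a/2$, so that $\|\chi(-\beta_a\Delta)u\|_{L^\ii}\le a/2$ and hence
 $$|\{|u|>a\}|\le|\{|(1-\chi(-\beta_a\Delta))u|>a/2\}|\le\frac{4\|(1-\chi(-\beta_a\Delta))u\|_{L^2}^2}{a^2}.$$
 Inserting this bound, using Plancherel, and performing the change of variables $b=(a/(2c_0))^{-4/(d-2s')}|\xi|^2$ gives, exactly as in the critical case,
 \begin{align*}
  \|u\|_{L^q}^q &\le 4q\int_{\R^d}|\hat u(\xi)|^2\int_0^\ii\big(1-\chi(\beta_a|\xi|^2)\big)^2 a^{q-3}\,da\,d\xi \\
  &= C_{d,s',q}\, c_0^{q-2}\int_0^\ii\big(1-\chi(b)\big)^2 b^{-s'-1}\,db\int_{\R^d}|\xi|^{2s'}|\hat u(\xi)|^2\,d\xi,
 \end{align*}
 where we used $(q-2)(d-2s')/2=2s'$, which holds by the definition $q=2d/(d-2s')$. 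The $b$-integral converges since $1-\chi$ vanishes near $0$, is bounded, and $s'>0$; therefore
 $$\|u\|_{L^q}^q\le C\, c_0^{q-2}\,\|u\|_{\dot H^{s'}}^2.$$
 Since $2<q<2d/(d-2s)$ we have $0<s'<s$, and Hölder's inequality in the Fourier variable yields $\|u\|_{\dot H^{s'}}^2\le\|u\|_{\dot H^s}^{2s'/s}\|u\|_{L^2}^{2(1-s'/s)}\le\|u\|_{H^s}^2$. Combining and taking $q$-th roots gives \eqref{eq:refined-GNS} with $\theta=(q-2)/q\in(0,1)$.
\end{proof}
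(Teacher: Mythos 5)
Your proof is correct and is essentially the paper's argument made explicit: the paper simply observes that Proposition~\ref{prop:refined-Sob} applied at the exponent $s'$ (for which $q=2d/(d-2s')$ is critical) gives the inequality with $\|u\|_{\dot H^{s'}}$ on the right, and then uses $\|u\|_{\dot H^{s'}}\le\|u\|_{H^s}$; you re-derive Proposition~\ref{prop:refined-Sob} at level $s'$ rather than citing it, and your Hölder interpolation in the last step is a slightly roundabout way of saying $\|u\|_{\dot H^{s'}}\le\|u\|_{H^{s'}}\le\|u\|_{H^s}$, but both steps are sound.
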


Proposition \ref{prop:refined-GNS} directly follows from Proposition \ref{prop:refined-Sob} and the injection $H^{s}\hookrightarrow H^{s'}$. Now let $(u_n)\subset H^s(\R^d)$ be a maximizing sequence for $\cS$. As before, from $\cS\neq0$ and the boundedness of $(u_n)$ in $H^s(\R^d)$ we deduce that there exists $c>0$ such that for all $n$,
$$\sup_{t>0}t^{(d-2s')/4}\|\chi(-t\Delta)u_n\|_{L^\ii}\ge c.$$
Now notice that from the boundedness of $(u_n)$ in $H^s$, we have for all $t>0$,
$$t^{(d-2s')/4}\|\chi(-t\Delta)u_n\|_{L^\ii}\le\frac{t^{(d-2s')/4}}{(2\pi)^{d/2}}\int_{\R^d}|\chi(t|\xi|^2)\hat{u_n}(\xi)|\,d\xi\le
\begin{cases}
 c t^{-s'/2}\|u_n\|_{L^2}\le Ct^{-s'/2}\\
 c t^{(s-s')/2}\|u_n\|_{\dot{H}^s}\le Ct^{(s-s')/2}
\end{cases}
$$
hence $t^{(d-2s')/4}\|\chi(-t\Delta)u_n\|_{L^\ii}\to0$ as $t\to0$ or $t\to+\ii$ uniformly in $n$, and thus there exist $t_-,t_+\in(0,+\ii)$ so that for all $n$,
$$\sup_{t\in[t_-,t+]}t^{(d-2s')/4}\|\chi(-t\Delta)u_n\|_{L^\ii}=\sup_{t>0}t^{(d-2s')/4}\|\chi(-t\Delta)u_n\|_{L^\ii}\ge c.$$
As above, one deduces from this lower bound that there exist $(x_n)\subset\R^d$ and $(t_n)\subset[t_-,t_+]$ such that $(t_n^{(d-2s')/4}\phi_n(t_n^{1/2}(\cdot-x_n))$ has a non-zero weak limit (up to a subsequence) in $H^s(\R^d)$. Since $(t_n)\subset[t_-,t_+]$, one can furthermore extract a subsequence so that $t_n\to t_*\in[t_-,t_+]$. This implies that $(u_n(\cdot-x_n))$ has a non-zero weak limit in $H^s(\R^d)$.

In the above proof, we used the refined inequality \eqref{eq:refined-GNS} which looks like the critical one of Proposition \ref{prop:refined-Sob} in the sense that both dilations and translations seem to appear in it (in the supremum in both $t$ and $x$). Our reasoning above shows that, by subcriticality of $q$, the supremum over all dilations $t\in(0,+\ii)$ can be replaced by a supremum over dilation $t\in[t_-,t_+]$ which is now a 'compact' symmetry group, and hence disappears in the final result. There are ways to obtain a refined inequality in which only translations appear; for instance in the case $s=1$ one has \cite[Lem. I.1]{Lions-84b}: for all $q\in(2,2d/(d-2))$ with $d\ge3$, 
$$\forall\phi\in H^1(\R^d),\quad \|\phi\|_{L^q}\le C\left(\sup_{z\in\Z^d}\|\phi\|_{L^2(z+[0,1)^d)}\right)^\theta\|\phi\|_{H^1}^{1-\theta},$$
for some $\theta\in(0,1)$ and $C>0$ independent of $\phi$. From this inequality, it is not hard to deduce the existence of non-zero weak limits up to translations. For general $s$, the non-locality of the $H^s$-norm makes the proof more difficult but a similar inequality where only translation appears was proved for $s\in(0,1)$ in \cite[Eq. (B.12)]{LenLew-11}. 

\begin{remark}
Sometimes, existence of maximizers is stated when $\cS$ is replaced by 
$$\sup\{\|u\|_{L^q}^q\ :\ u\in H^s(\R^d),\ \|u\|_{L^2}=1,\ \|u\|_{\dot{H}^s}=1\}.$$
In this case, one can still apply the same strategy because the above supremum is related by scaling to $\cS$ where the $H^s$-norm $\|u\|_{H^s}=\|(1-\Delta)^{s/2}u\|_{L^2}$ is replaced by the equivalent norm $\|(1+(-\Delta)^{s})^{1/2}u\|_{L^2}$. 
\end{remark}

\subsection{Strichartz inequality}

We give one final example where the above techniques can be applied, which is the one of Strichartz inequalities. We will see that in this case, more advanced techniques are required to obtain Assumptions (4) and (5) of Proposition \ref{prop:MMM-2q}. The fact that Lieb's strategy could be applied to this case was understood by R. Frank and the author in an unpublished work.

Let $d\ge1$ and define 
$$\cS=\sup\left\{\int_{\R}\int_{\R^d}|(e^{it\Delta_x}u)(x)|^{2+4/d}\,dx\,dt\ :\ u\in L^2(\R^d),\ \|u\|_{L^2}=1\right\}.$$

This problem has a lot of symmetries: besides translations and dilations, there are also the transformations $u(x)\to (e^{is\Delta_x}u)(x)$ for any $s\in\R$ and $u(x)\to e^{ix\cdot\xi}u(x)$ for any $\xi\in\R^d$. 

\begin{proposition}
 There exists $u\in L^2(\R^d)$ with $\|u\|_{L^2}=1$ and $\|e^{it\Delta}u\|_{L^{2+4/d}(\R\times\R^d)}^{2+4/d}=\cS$. 
\end{proposition}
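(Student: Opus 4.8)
The plan is to apply Proposition \ref{prop:MMM-2q} with $\cH = L^2(\R^d)$ and the operator $A : u \mapsto e^{it\Delta_x}u$, which by the classical Strichartz estimate (Strichartz, Keel--Tao) is a bounded operator from $L^2(\R^d)$ into $L^{2+4/d}(\R\times\R^d)$; since $q = 2+4/d > 2$, the setting of Proposition \ref{prop:MMM-2q} applies and $\cS \in (0,+\ii)$. Starting from a maximizing sequence $(u_n)$, which automatically satisfies Assumptions (1) and (2), the weak compactness of the unit ball of $L^2(\R^d)$ furnishes a weak limit, so there remains --- exactly as in the other examples of this section --- to produce a maximizing sequence with a \emph{non-zero} weak limit (Assumption (4)) and to verify the almost-everywhere convergence of the Strichartz flows (Assumption (5)).

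For Assumption (4), the plan is to use a refined Strichartz inequality in the spirit of Proposition \ref{prop:refined-Sob}: there exist $C>0$ and $\theta\in(0,1)$ such that
\begin{equation*}
  \|e^{it\Delta}u\|_{L^{2+4/d}(\R\times\R^d)} \le C\Big(\sup_{t>0}t^{(d-2)/4}\cdots\Big)^{\theta}\|u\|_{L^2}^{1-\theta},
\end{equation*}
or, more to the point, a bilinear/Littlewood--Paley refinement (Bourgain, Moyua--Vargas--Vega, Tao--Vargas--Vega, Bégout--Vargas) bounding the Strichartz norm by a geometric-mean of $\|u\|_{L^2}$ and $\sup$ over frequency-localized pieces of a weaker norm. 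Feeding a maximizing sequence into such an inequality and using $\cS \neq 0$ produces, for each $n$, a frequency scale $\mu_n>0$, a space translation $x_n$, a time $s_n$, and a Galilean (frequency) parameter $\xi_n$ such that a testing functional against a fixed $L^2$ profile is bounded below. Applying the corresponding symmetries $u_n \mapsto \mu_n^{d/2}e^{-is_n\Delta}(e^{ix\cdot\xi_n}u_n)(\mu_n(\cdot - x_n))$ --- all of which preserve both the constraint $\|u\|_{L^2}=1$ and the Strichartz functional, hence keep the sequence maximizing --- yields a maximizing sequence whose weak limit $v$ is non-zero.

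Assumption (5), the almost-everywhere convergence $e^{it\Delta}v_n \to e^{it\Delta}v$ on $\R\times\R^d$ (along a subsequence), is where I expect the main difficulty: unlike in the Sobolev or Gagliardo--Nirenberg examples, there is no compact Sobolev embedding available and a bounded sequence in $L^2(\R^d)$ need not have any a.e.-convergent subsequence, so one cannot simply invoke Rellich--Kondrachov. The route I would take is to upgrade the weak $L^2$-convergence of $(v_n)$ to a.e.-convergence of the space-time flows by an improved-integrability argument: extract from the refined/bilinear Strichartz estimate (or from a local smoothing / Kato-smoothing estimate for $e^{it\Delta}$) that $(e^{it\Delta}v_n)$ is bounded in some space with a little extra regularity (e.g. $L^p_t H^\sigma_{x,\mathrm{loc}}$ for small $\sigma>0$, or a fractional Strichartz space), which is locally compactly embedded into $L^2_{t,x,\mathrm{loc}}$; this yields strong local convergence of $e^{it\Delta}v_n$ to $e^{it\Delta}v$ and hence, up to a further subsequence, a.e.-convergence. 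Once all five hypotheses are in place, Proposition \ref{prop:MMM-2q} gives strong $L^2$-convergence of $(v_n)$ and identifies the limit $v$ (suitably normalized) as a maximizer, which completes the proof; as a byproduct one obtains that every maximizing sequence converges strongly in $L^2(\R^d)$ after applying a sequence of the symmetries listed above, recovering a result of the type proved by Shao.
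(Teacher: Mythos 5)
Your plan matches the paper's argument step for step: apply Proposition \ref{prop:MMM-2q} with $A:u\mapsto e^{it\Delta}u$, obtain a non-zero weak limit via a refined (bilinear, frequency-localized) Strichartz inequality after renormalizing by the symmetry group, and obtain the a.e.\ convergence in Assumption~(5) from the local smoothing estimate, which gives compactness of $u\mapsto e^{it\Delta}u$ into $L^2_{\mathrm{loc}}(\R\times\R^d)$. This is precisely the route the paper takes (citing the Killip--Visan refined inequality and a Keraani-type local compactness lemma built on Constantin--Saut smoothing), so there is nothing to add beyond filling in the quantitative details you outlined.
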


This result has first been proved by Kunze \cite{Kunze-03} for $d=1$ and by Shao \cite{Shao-09} for $d\ge2$, using profile decompositions. We show here that Proposition \ref{prop:MMM-2q} leads to the same result, even if the same tools are at the core of all the proofs. To apply Proposition \ref{prop:MMM-2q} to this case, we again need to find a maximizing sequence $(v_n)$ which converges weakly to a non-zero limit $v$, as well as to show that $e^{it\Delta_x}v_n\to e^{it\Delta_x}v$ a.e. on $\R\times\R^d$. 

The first step can also be obtained via the following refined inequality, which can be found for instance in the lecture notes of Killip and Visan \cite[Prop. 4.24]{KilVis-book} and which relies on deep bilinear estimates due to Tao \cite{Tao-03}:

\begin{proposition}
 There exists $C>0$ and $\theta\in(0,1)$ such that for all $u\in L^2(\R^d)$ we have 
  $$\|e^{it\Delta_x}u\|_{L^{2+4/d}(\R\times\R^d)}\le C\left(\sup_{Q\in\cD}|Q|^{-1/2}\|e^{it\Delta_x}u_Q\|_{L^\ii(\R\times\R^d)}\right)^{\theta}\|u\|_{L^2}^{1-\theta},$$
  where $\cD$ denotes the family of dyadic cubes on $\R^d$ of side length $2^j$ and centered at $(2^j\Z)^d$, for all $j\in\Z$, and $u_Q:=\cF^{-1}(\1_Q\hat{u})$.
\end{proposition}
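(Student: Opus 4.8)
The plan is to follow the bilinear-to-linear method of Bourgain and of Tao--Vargas--Vega, using the deep ingredient (Tao's bilinear restriction estimate for the paraboloid) only as a black box. Write $p:=2+4/d$, so the target is $\|e^{it\Delta}u\|_{L^{p}(\R\times\R^d)}$ and $p/2=1+2/d=\tfrac{d+2}{d}$. Two pieces of numerology drive everything. First, $p/2$ is exactly the scale-invariant exponent for the bilinear object $(f,g)\mapsto (e^{it\Delta}f)(e^{it\Delta}g)$ on $L^2\times L^2$: by parabolic rescaling one checks that $\|(e^{it\Delta}f)(e^{it\Delta}g)\|_{L^{r}_{t,x}}\lesssim N^{\,d-(d+2)/r}\|f\|_{L^2}\|g\|_{L^2}$ when $\hat f,\hat g$ are supported in cubes of side $N$ separated by $\sim N$, and the exponent $d-(d+2)/r$ vanishes precisely at $r=p/2$. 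Second, $p/2$ lies strictly above the threshold $\tfrac{d+3}{d+1}$ of Tao's theorem, since $\tfrac{d+2}{d}>\tfrac{d+3}{d+1}\iff (d+2)(d+1)>d(d+3)\iff 2>0$; this strict gap gives the room needed to interpolate and to sum.

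First I would Whitney-decompose frequency space off the diagonal: for a.e.\ $(\xi,\eta)$ with $\xi\neq\eta$ there are a unique dyadic scale $N$ and a unique pair of dyadic cubes $Q,Q'\in\cD$ of common side $N$ that are ``close but separated'', $N\lesssim\dist(Q,Q')\lesssim N$, and each cube has $O(1)$ such partners. With $u_Q=\cF^{-1}(\1_Q\hat u)$ as in the statement this yields the pointwise identity $|e^{it\Delta}u|^2=\sum_{Q\sim Q'}(e^{it\Delta}u_Q)\overline{(e^{it\Delta}u_{Q'})}$, hence $\|e^{it\Delta}u\|_{L^p}^2=\big\|\sum_{Q\sim Q'}(e^{it\Delta}u_Q)\overline{(e^{it\Delta}u_{Q'})}\big\|_{L^{p/2}(\R\times\R^d)}$. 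The point of the Whitney relation is that each surviving pair $(Q,Q')$ is exactly in the configuration to which Tao's bilinear estimate applies.

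The per-pair estimate combines, for each Whitney pair, Tao's bilinear bound at a sub-critical exponent $q_0\in(\tfrac{d+3}{d+1},\tfrac{d+2}{d})$, namely $\|(e^{it\Delta}u_Q)(e^{it\Delta}u_{Q'})\|_{L^{q_0}_{t,x}}\lesssim N^{\,d-(d+2)/q_0}\|u_Q\|_{L^2}\|u_{Q'}\|_{L^2}$, with the trivial bound $\|(e^{it\Delta}u_Q)(e^{it\Delta}u_{Q'})\|_{L^\infty_{t,x}}\le \|e^{it\Delta}u_Q\|_{L^\infty}\|e^{it\Delta}u_{Q'}\|_{L^\infty}\le |Q|^{1/2}|Q'|^{1/2}M^2$, where $M:=\sup_{Q\in\cD}|Q|^{-1/2}\|e^{it\Delta}u_Q\|_{L^\infty(\R\times\R^d)}$ is the quantity on the right-hand side of the claim. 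Interpolating these with weights $1-\theta$ and $\theta$ chosen so that $\tfrac{1}{p/2}=\tfrac{1-\theta}{q_0}$ makes the powers of $N$ cancel exactly (this is where scale-invariance of $p/2$ is used), giving, for any sufficiently small $\theta=\theta(d)\in(0,1)$,
$$\|(e^{it\Delta}u_Q)(e^{it\Delta}u_{Q'})\|_{L^{p/2}_{t,x}}\ \lesssim\ M^{2\theta}\big(\|u_Q\|_{L^2}\|u_{Q'}\|_{L^2}\big)^{1-\theta}.$$
To reassemble, I would group the Whitney sum by scale, $|e^{it\Delta}u|^2=\sum_N F_N$ with $F_N=\sum_{\ell(Q)=N,\,Q\sim Q'}(e^{it\Delta}u_Q)\overline{(e^{it\Delta}u_{Q'})}$, and exploit orthogonality: the $F_N$ have spatial Fourier support in $\{|\xi|\sim N\}$, so they are almost orthogonal for the spatial Littlewood--Paley decomposition (legitimate since $1<p/2<\infty$), while within a fixed scale the summands of $F_N$ have finitely overlapping space--time Fourier supports; feeding the per-pair bound into these orthogonality relations, together with the identity $\sum_{\ell(Q)=N}\|u_Q\|_{L^2}^2=\|u\|_{L^2}^2$ at each fixed scale, should deliver $\|e^{it\Delta}u\|_{L^p}^2\lesssim M^{2\theta}\|u\|_{L^2}^{2(1-\theta)}$, i.e.\ the claim with the exponent $\theta$ above.

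The step I expect to be the main obstacle is precisely this reassembly. A naive triangle inequality over all Whitney pairs and all dyadic scales diverges — at every scale the cubes of that side exhaust all of frequency space — so the double sum must be organized so that the orthogonality across scales genuinely pays for its convergence, which is where the strict inequality $p/2>\tfrac{d+3}{d+1}$ and the combinatorial geometry of the Whitney decomposition (bounded multiplicity of the difference map $(\xi,\eta)\mapsto(\xi-\eta,\,|\xi|^2-|\eta|^2)$, $O(1)$ partners per cube) really enter. The passage from an $L^{p/2}$-norm of an almost-orthogonal sum to an $\ell^{p/2}$- (or $\ell^2$-) sum of the norms is most transparent for $d=2$, where $p/2=2$ and Plancherel applies directly; for $d\ge 3$ one has $p/2<2$, and for $d=1$ one has $p/2=3>2$, so in both cases a genuine square-function estimate replaces the bare Plancherel step, and the bookkeeping of the resulting series must be carried out with some care. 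By contrast, Tao's bilinear estimate is used purely as an input and requires no further work here; the references \cite{Tao-03} and \cite[Prop.~4.24]{KilVis-book} contain the details of both the bilinear estimate and this reassembly.
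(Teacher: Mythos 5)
Your sketch is correct and coincides with the argument behind the result as the paper presents it: the paper offers no proof at all, merely citing Tao's bilinear estimate \cite{Tao-03} and \cite[Prop.~4.24]{KilVis-book}, and your Whitney decomposition of the off-diagonal, the rescaled bilinear bound at an exponent $q_0\in(\tfrac{d+3}{d+1},\tfrac{d+2}{d})$, the interpolation with the trivial $L^\infty$ bound that makes the powers of $N$ cancel, and the almost-orthogonal reassembly are exactly the ingredients of that cited proof, with the numerology checked correctly throughout. The reassembly step you flag as delicate is indeed where the remaining work lies, but it is carried out in the references you name, so there is no gap relative to what the paper itself provides.
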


If $(u_n)$ is a maximizing sequence for $\cS$, one deduces from the refined inequality that there exist $c>0$, $(t_n,x_n)\subset\R\times\R^d$, $(\delta_n)\subset(0,+\ii)$, and $(c_n)\subset\R^d$ such that for any $n$,
$$|\langle g,v_n\rangle|=\delta_n^{-d/2}\left|\int_{c_n+[-\delta_n,\delta_n)^d}e^{-it_n|\xi|^2-ix_n\cdot\xi}\hat{u_n}(\xi)\,d\xi\right|\ge c,$$
where $g=\cF^{-1}(\1_{[-1,1)^d})$ and $v_n:=\cF^{-1}(\xi\mapsto \delta_n^{d/2}e^{-it_n|c_n+\delta_n\xi|^2-ix_n\cdot(c_n+\delta_n\xi)}\hat{u_n}(c_n+\delta_n\xi))$. It can be shown that $(v_n)$ is still a maximizing sequence for $\cS$, which thus has a non-zero weak limit in $L^2$.

The a.e. convergence of $e^{it\Delta_x}v_n$ to $e^{it\Delta_x}v$ can be proved using the following result stated for instance in \cite[Proposition 1.4]{Keraani-01} which is a consequence of the local smoothing properties of $e^{it\Delta_x}$:

\begin{proposition}
 The map $v\in L^2(\R^d)\mapsto e^{it\Delta}v \in L^2_{\rm loc}(\R\times\R^d)$ is compact. 
\end{proposition}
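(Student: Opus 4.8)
The plan is to exhibit the compactness directly from a local smoothing estimate for the free Schr\"odinger group combined with the Aubin--Lions compactness lemma. First I would record the reduction: since a bounded set of $L^2(\R^d)$ is relatively weakly compact, it suffices to show that whenever $v_n\rightharpoonup v$ weakly in $L^2(\R^d)$ one has $e^{it\Delta}v_n\to e^{it\Delta}v$ in $L^2_{\rm loc}(\R\times\R^d)$; by linearity, setting $w_n:=v_n-v$, the claim becomes: if $(w_n)$ is bounded in $L^2(\R^d)$ with $w_n\rightharpoonup0$, then $e^{it\Delta}w_n\to0$ in $L^2(Q)$ for every cylinder $Q=(-T,T)\times B_R$ (these exhaust $\R\times\R^d$).

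The analytic input is the (Kato) local smoothing estimate: for every $R,T>0$ there is $C_{R,T}>0$ with $\|e^{it\Delta}u\|_{L^2((-T,T);H^{1/2}(B_R))}\le C_{R,T}\|u\|_{L^2(\R^d)}$ for all $u\in L^2(\R^d)$. I would cite this as classical (Constantin--Saut, Sjölin, Vega, Kato--Yajima) rather than reprove it; it is essentially a $TT^*$/Fourier restriction computation. Applying it to $w_n$ shows that $(u_n)$, where $u_n:=e^{it\Delta}w_n$, is bounded in $L^2((-T,T);H^{1/2}(B_R))$. Next, from the equation $\partial_t u_n=i\Delta u_n$ (valid in the sense of distributions on $\R\times\R^d$) and the boundedness of $(u_n)$ in $L^2((-T,T);L^2(B_R))$, one gets that $(\partial_t u_n)$ is bounded in $L^2((-T,T);H^{-2}(B_R))$, since $\Delta\colon L^2(B_R)\to H^{-2}(B_R)$ is continuous.

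Now I would invoke the Aubin--Lions--Simon lemma with the triple $H^{1/2}(B_R)\hookrightarrow\hookrightarrow L^2(B_R)\hookrightarrow H^{-2}(B_R)$, the first embedding being compact by the Rellich--Kondrachov theorem on the ball: boundedness of $(u_n)$ in $L^2((-T,T);H^{1/2}(B_R))$ together with boundedness of $(\partial_t u_n)$ in $L^2((-T,T);H^{-2}(B_R))$ yields that $(u_n)$ is relatively compact in $L^2((-T,T);L^2(B_R))=L^2(Q)$. On the other hand, $u_n=e^{it\Delta}w_n\rightharpoonup0$ weakly in $L^2(Q)$ because $w_n\rightharpoonup0$ and both $e^{it\Delta}$ and restriction to $Q$ are bounded linear maps. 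Hence every $L^2(Q)$-convergent subsequence of $(u_n)$ has limit $0$, and by relative compactness the whole sequence converges to $0$ in $L^2(Q)$. Since $T,R$ were arbitrary, $e^{it\Delta}w_n\to0$ in $L^2_{\rm loc}(\R\times\R^d)$, which by the reduction above proves that $v\mapsto e^{it\Delta}v$ maps bounded sets of $L^2(\R^d)$ into relatively compact subsets of $L^2_{\rm loc}(\R\times\R^d)$.

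The only non-soft ingredient is the local smoothing estimate, so I expect that to be the ``hard part'' one leans on; everything else is standard functional analysis (reflexivity of $L^2$, the distributional Schr\"odinger equation, Rellich, and Aubin--Lions). A minor technical point to be careful about is the comparability, on the bounded domain $B_R$, of the local homogeneous $\dot H^{1/2}$ norm provided by the smoothing estimate with the inhomogeneous $H^{1/2}(B_R)$ norm needed for Rellich; this is harmless since $u_n$ also lies in $L^2$.
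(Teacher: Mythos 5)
Your proof is correct, and it takes a genuinely different route from the paper's, although both are built on the same analytic engine, the Kato--Constantin--Saut local smoothing estimate. The paper (which also proves the smoothing estimate from scratch via Schur's test on the Fourier side) then proceeds by a direct frequency decomposition: it splits $w_n = w_{n,>} + w_{n,<}$ at a fixed frequency $\Lambda$, bounds $\|e^{it\Delta}w_{n,>}\|_{L^2(\cC)}^2 \lesssim \Lambda^{-1}$ using the $(-\Delta)^{1/4}$ gain from local smoothing, and handles the low-frequency piece $w_{n,<}$ by observing that weak convergence of $w_n$ gives pointwise convergence of $e^{it\Delta}w_{n,<}$ to $0$ together with a uniform $L^\ii$ bound, so dominated convergence applies. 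Your route instead packages the regularity gain as boundedness in $L^2_t H^{1/2}_x$ locally, feeds the Schr\"odinger equation into a time-derivative bound in $L^2_t H^{-2}_x$, and invokes Aubin--Lions--Simon with the triple $H^{1/2}(B_R)\hookrightarrow\hookrightarrow L^2(B_R)\hookrightarrow H^{-2}(B_R)$. This avoids the separate low-frequency/dominated-convergence step entirely and is perhaps conceptually cleaner (the compactness in time comes from the equation rather than a frequency cutoff), at the price of leaning on the heavier black box of Aubin--Lions. All the reductions you state (compactness $\Leftrightarrow$ weak-to-strong continuity for a reflexive source, diagonalizing over cylinders $Q=(-T,T)\times B_R$, locality of $\Delta$ so that the restricted $u_n$ still solves the equation on $B_R$, and the equivalence of the local $\dot H^{1/2}$ and $H^{1/2}$ norms using the $L^2$ bound) are correct as you describe them, so I see no gap.
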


\begin{proof}
 Notice first that we have the following local smoothing estimate \cite{ConSau-88}: for any $a\in\cS(\R^d)$, there exists $C>0$ such that for any $u\in L^2(\R^d)$ one has 
 $$\int_{\R}\int_{\R^d}a(x)|(-\Delta_x)^{1/4}(e^{it\Delta_x}u)(x)|^2\,dx\,dt \le C\|u\|_{L^2}^2.$$
 Indeed, the left integral in Fourier variables is equal to
 $$(2\pi)^{d/2}\int_{\R^d}\int_{\R^d}\hat{u}(\xi) K_a(\xi,\xi')\bar{\hat{u}(\xi')}\,d\xi\,d\xi'$$
 with an integral kernel
 $$K(\xi,\xi')=|\xi|^{1/2}|\xi'|^{1/2}\hat{a}(\xi'-\xi)\delta(|\xi|^2-|\xi'|^2).$$
 To prove the inequality, it is enough by Schur's test to bound uniformly in $\xi$,
 $$\int_{\R^d}|K(\xi,\xi')|\,d\xi'=|\xi|^{d-1}\int_{\Sph^{d-1}}|\hat{a}(|\xi|\omega-\xi)|\,d\omega.$$
 For small $\xi$, this is clearly bounded while for large $\xi$, one can write any $\omega\in\Sph^{d-1}$ as $\omega=(\cos\theta)\omega_\xi+(\sin\theta)\omega'$ with $\omega_\xi=\xi/|\xi|$ and $\omega'\cdot\xi=0$, $|\omega'|=1$. Using the decay of $\hat{a}$, we then have the bound 
 $$\int_{\Sph^{d-1}}|\hat{a}(|\xi|\omega-\xi)|\,d\omega\le C\int_0^\pi\frac{(\sin\theta)^{d-2}}{(1+|\xi|\sin\theta)^d}\,d\theta\le C'|\xi|^{-(d-1)}.$$
 To prove the proposition, let $(v_n)\subset L^2(\R^d)$ such that $v_n\rightharpoonup0$ and let $\cC\subset\R^{d+1}$ be a compact set. Let us prove that $e^{it\Delta}v_n\to0$ in $L^2(\cC)$. Let $\epsilon>0$, $\Lambda>0$ and $a\in\cS(\R^d)$ such that $a>0$. Split $v_n$ as $v_n=v_{n,>}+v_{n,<}$ with $\hat{v_{n,>}}(\xi)=\1(|\xi|>\Lambda)\hat{v_n}$. By the local smoothing estimate, we have 
 \begin{align*}
  \|e^{it\Delta}v_{n,>}\|_{L^2(\cC)}^2 &\le (\min_\cC a)^{-1}\int_{\R}\int_{\R^d}a(x)|(e^{it\Delta}v_{n,>})(x)|^2\,dx\,dt \\
  &\le C(\min_\cC a)^{-1}\|(-\Delta)^{-1/4}v_{n,>}\|_{L^2}^2 \\
  &\le C'\Lambda^{-1},
 \end{align*}
 where in the last step we used that $(v_n)$ is bounded in $L^2(\R^d)$. We thus fix $\Lambda$ large enough so that $C'\Lambda^{-1}\le\epsilon^2$, hence $\|e^{it\Delta}v_{n,>}\|_{L^2(\cC)}\le\epsilon$ for all $n$. Now for any $(t,x)\in\cC$ we have 
 $$(e^{it\Delta}v_{n,<})(x)=(2\pi)^{-d/2}\int_{|\xi|\le\Lambda}e^{-it|\xi|^2+i\xi\cdot x}\hat{v_n}(\xi)\,d\xi=\langle v_n,g\rangle$$
 with $g:=(2\pi)^{-d/2}\cF^{-1}(\1(|\xi|\le\Lambda)e^{it|\xi|^2-i\xi\cdot x})\in L^2(\R^d)$, so that by weak convergence we have $(e^{it\Delta}v_{n,<})(x)\to0$ as $n\to\ii$ for all $(t,x)\in\cC$. Furthermore, again by boundedness of $(v_n)$ in $L^2$, we also have the bound $|(e^{it\Delta}v_{n,<})(x)|\le C\Lambda^{d/2}$ uniform in $n$. By dominated convergence, we deduce that $e^{it\Delta}v_{n,<}\to0$ in $L^2(\cC)$ and hence $\|e^{it\Delta}v_{n,<}\|_{L^2(\cC)}\le\epsilon$ for $n$ large enough, which concludes the proof.
\end{proof}

\section{Applications to problems with approximate symmetries}\label{sec:app-approx-symp}

After these several examples of how to deal with non-compact invariances, we present a few examples where the same strategy can be applied in the context of 'almost' invariances. What we mean by that is for instance a problem that is not translation-invariant but when one translates a function to infinity, a new effective optimization problem arises. To show that the original problem has an optimizer through convergent optimizing sequences, one thus has to understand why it is energetically unfavorable to send some/all the mass to infinity. We illustrate this idea on the historical example of the Br\'ezis-Nirenberg problem, which is a version of the Sobolev inequality that is not translation or dilation invariant, and on the more recent example of the Stein-Tomas inequality, which is a version of the Strichartz inequality with less invariances. We focus on these two examples but of course, this phenomenon is ubiquitous in optimization problems and we will not try to give a complete list of the various methods used to deal with it.

\subsection{The Br\'ezis-Nirenberg problem} Let $d\ge3$ and $\Omega$ an open bounded subset of $\R^d$. Denote by $\lambda_1(\Omega)>0$ the first eigenvalue of the Dirichlet Laplacian on $\Omega$. Let $\lambda\in[0,\lambda_1(\Omega))$ and $q\in2d/(d-2)$. In the Br\'ezis-Nirenberg problem \cite{BreNir-83}, one wants to know whether maximizers exist for
$$\cS_\lambda:=\sup\left\{ \int_\Omega |u|^q\ :\ u\in H^1_0(\Omega),\ \|\nabla u\|_{L^2}^2-\lambda\|u\|_{L^2}^2=1\right\}.$$
Compared to the similar problem of the existence of optimizers for the Sobolev inequality in $\R^d$, we see that posing the problem in $\Omega$ breaks translation and dilation invariance. However, dilation remains an 'almost' invariance, in the sense that if one fixes $x_0\in\Omega$ and $\epsilon>0$ such that $B(x_0,\epsilon)\subset\Omega$, then for any $u\in C^\ii_c(B(x_0,\epsilon))\subset H^1_0(\Omega)$, the function $u_\delta:x\mapsto \delta^{(d-2)/2}u(x_0+\delta(x-x_0))$ belongs to $H^1_0(\Omega)$ for any $\delta\ge1$ so that one can still dilate some functions in the maximization set (only for large dilation parameters $\delta$, meaning that one can only contract a function). The operation $u\to u_\delta$ clearly does not leave the maximization problem invariant, since while $\int_\Omega|u_\delta|^q = \int_\Omega|u|^q$ and $\|\nabla u_\delta\|_{L^2}^2=\|\nabla u\|_{L^2}^2$, we have $\|u_\delta\|_{L^2}^2=\delta^{-2}\|u\|_{L^2}$. This fact has several important consequences regarding the maximization problem $\cS_\lambda$. First, it implies that $\cS_\lambda\ge\cS$, where $\cS$ is the Sobolev constant on $\R^d$,
$$\cS=\sup\left\{\int_{\R^d} |u|^q\ :\ u\in H^1(\R^d),\ \|\nabla u\|_{L^2}=1\right\}.$$
Indeed, by density of $C^\ii_0(\R^d)$ in $H^1(\R^d)$, the above sup can be computed by looking only at $u$'s in  $C^\ii_0(\R^d)$. Now any such $u$ can be dilated and translated so that it belongs to $C^\ii_c(B(x_0,\epsilon))\subset H^1_0(\Omega)$, without changing its $L^q(\R^d)$ and $\dot{H}^1(\R^d)$ norms. By the above procedure, one can then dilate this $u$ so that
$$\frac{\int_\Omega|u_\delta|^q}{(\|\nabla u_\delta\|_{L^2}^2-\lambda\|u_\delta\|_{L^2}^2)^{q/2}}\longrightarrow_{\delta\to\ii}\frac{\int_{\R^d}|u|^q}{\|\nabla u\|_{L^2}^{q/2}},$$
which then indeed proves that $\cS\le\cS_\lambda$ since the left quotient is less than $\cS_\lambda$ for all $\delta\ge1$. Secondly, this 'almost' invariance also has important consequences regarding the convergence of maximizing sequences. Indeed, if one has the \emph{equality} $\cS=\cS_\lambda$, then one can easily find maximizing sequences for $\cS_\lambda$ that converge weakly to $0$ in $H^1_0(\Omega)$: one can just consider a maximizing sequence $(u_n)\subset C^\ii_0(\R^d)$ for $\cS$ and scale it by the above procedure, with a scaling parameter $\delta_n\ge1$ large enough so that $\delta_n^{-1}\|u_n\|_{L^2}\to0$ as $n\to\ii$. Hence, $\cS_\lambda<\cS$ is a necessary condition for maximizing sequences for $\cS_\lambda$ to converge strongly. Using the same method as in Proposition \ref{prop:MMM-2q}, Lieb proved in \cite[Lemma 1.2]{BreNir-83} that this condition was also sufficient.

\begin{proposition}
 If $\cS_\lambda>\cS$, then there exists $u\in H^1_0(\Omega)$ with $\|\nabla u\|_{L^2}^2-\lambda\|u\|_{L^2}^2=1$ such that $\|u\|_{L^q}^q=\cS_\lambda$.
\end{proposition}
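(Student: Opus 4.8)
The plan is to run the argument of Proposition \ref{prop:MMM-2q} with the Hilbert space $\cH=H^1_0(\Omega)$ equipped with the scalar product $\langle u,v\rangle_\lambda:=\int_\Omega\nabla u\cdot\nabla v-\lambda\int_\Omega uv$ and the bounded operator $A\colon\cH\to L^q(\Omega)$, $Au=u$. Since $0\le\lambda<\lambda_1(\Omega)$, the Poincaré inequality gives $\|u\|_\lambda^2:=\langle u,u\rangle_\lambda\ge(1-\lambda/\lambda_1(\Omega))\|\nabla u\|_{L^2}^2$, so $\|\cdot\|_\lambda$ is a norm equivalent to the $H^1_0$-norm and the constraint set is a sphere for it; boundedness of $A$ is the Sobolev embedding. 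The one genuinely new ingredient, absent from the abstract statement, is that the constant controlling the surviving piece of a maximizing sequence is $\cS_\lambda$, while the one controlling an escaping (i.e. concentrating) bubble is the full-space Sobolev constant $\cS$, because the residual contraction almost-invariance effectively puts the bubble in all of $\R^d$. This is where $\cS_\lambda>\cS$ enters.

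First I would take a maximizing sequence $(u_n)\subset H^1_0(\Omega)$ with $\|u_n\|_\lambda^2=1$ and $\int_\Omega|u_n|^q\to\cS_\lambda$. It is bounded in $H^1_0(\Omega)$, so up to a subsequence $u_n\rightharpoonup u$ in $H^1_0(\Omega)$; since $q=2d/(d-2)$ is critical the Rellich--Kondrachov embedding $H^1_0(\Omega)\hookrightarrow L^r(\Omega)$ is compact only for $r<q$, in particular for $r=2$, so $u_n\to u$ strongly in $L^2(\Omega)$ and, along a further subsequence, $u_n\to u$ a.e. on $\Omega$. Expanding $\langle\cdot,\cdot\rangle_\lambda$ around $u$ (the cross term tends to $0$ by weak convergence) and using $u_n\to u$ in $L^2$ gives $\|u_n-u\|_\lambda^2=\|\nabla(u_n-u)\|_{L^2}^2+o(1)$ and $1=\|u_n\|_\lambda^2=\|u\|_\lambda^2+\|\nabla(u_n-u)\|_{L^2}^2+o(1)$; write $a:=\|u\|_\lambda^2$, so $a\in[0,1]$ and $\|\nabla(u_n-u)\|_{L^2}^2\to 1-a$.

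Next I would combine three facts: the Br\'ezis--Lieb lemma (Lemma \ref{lem:bre-lie}) with $r=q$ applied to $(u_n)$, giving $\int_\Omega|u_n|^q=\int_\Omega|u|^q+\int_\Omega|u_n-u|^q+o(1)$; the definition of $\cS_\lambda$ together with scaling, $\int_\Omega|u|^q\le\cS_\lambda\,a^{q/2}$; and the full-space Sobolev inequality applied to $u_n-u$ extended by zero to $\R^d$, $\int_\Omega|u_n-u|^q\le\cS\,\|\nabla(u_n-u)\|_{L^2}^{q}=\cS\,(\|\nabla(u_n-u)\|_{L^2}^2)^{q/2}$ (recall $q/2=d/(d-2)$). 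Letting $n\to\ii$ yields
$$\cS_\lambda\le\cS_\lambda\,a^{q/2}+\cS\,(1-a)^{q/2}.$$
If $a=0$ this reads $\cS_\lambda\le\cS$, contradicting the hypothesis, so $u\neq0$. If $0<a<1$, then since $q/2>1$ we have $a^{q/2}<a$ and $(1-a)^{q/2}<1-a$, hence $\cS_\lambda<\cS_\lambda a+\cS(1-a)\le\cS_\lambda a+\cS_\lambda(1-a)=\cS_\lambda$ (using $\cS\le\cS_\lambda$), a contradiction. Therefore $a=1$, i.e. $u$ satisfies the constraint $\|\nabla u\|_{L^2}^2-\lambda\|u\|_{L^2}^2=1$; moreover $\|\nabla(u_n-u)\|_{L^2}\to 0$, which together with $\|u_n-u\|_{L^2}\to0$ gives $u_n\to u$ strongly in $H^1_0(\Omega)$, so $\int_\Omega|u|^q=\lim_n\int_\Omega|u_n|^q=\cS_\lambda$ and $u$ is the desired maximizer.

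The step I expect to be the main obstacle is precisely the proof that $u\neq0$, i.e. ruling out that the whole maximizing sequence disperses: here one uses that the escaping piece $u_n-u$ lives in $H^1_0(\Omega)\subset\dot H^1(\R^d)$ and so can only convert $\dot H^1$-energy into $L^q$-mass at the rate $\cS$, which is too inefficient when $\cS<\cS_\lambda$ — this is the analytic content of the Br\'ezis--Nirenberg gap condition. Everything else is the Br\'ezis--Lieb splitting together with the elementary superadditivity argument already used in the proof of Proposition \ref{prop:MMM-2q}; the only points deserving a line of care are that $\|\cdot\|_\lambda$ is genuinely equivalent to the $H^1_0$-norm (this is where $\lambda<\lambda_1(\Omega)$ is used) and that the full-space Sobolev inequality legitimately applies to $H^1_0(\Omega)$-functions after extension by zero.
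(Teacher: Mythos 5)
Your proof is correct and follows essentially the same path as the paper's: Br\'ezis--Lieb splitting of $\int|u_n|^q$, control of the weak limit by $\cS_\lambda$, control of the vanishing remainder by the whole-space Sobolev constant $\cS$ (via strong $L^2$ convergence from Rellich--Kondrachov, so the $\lambda$-term drops out, plus extension by zero to $\R^d$), and then strict superadditivity together with the gap $\cS_\lambda>\cS$ to force $a=1$. The paper packages the same estimate through an auxiliary quantity $\tilde{\cS}$ (the best constant along weakly vanishing sequences) and invokes the abstract Proposition \ref{prop:MMM-2q} once a non-zero weak limit is secured; you inline that argument and merge the $a=0$ and $0<a<1$ cases into a single superadditivity step, which is a clean and slightly more self-contained presentation of the identical idea.
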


\begin{proof}
 Define 
 $$\tilde{\cS}:=\sup\left\{\limsup_{n\to\ii}\int_\Omega |u_n|^q\ :\ (u_n)\subset H^1_0(\Omega),\ \forall n,\ \|\nabla u_n\|_{L^2}^2-\lambda\|u_n\|_{L^2}^2=1,\ u_n\rightharpoonup0\right\},$$
 and let us first show the result if $\cS_\lambda>\tilde{\cS}$. We will then show that $\cS=\tilde{\cS}$. Let $(u_n)$ be a maximizing sequence for $\cS_\lambda$. Since $\cS_\lambda>\tilde{\cS}$, we deduce that $u_n\not\rightharpoonup0$ and hence $u_n\rightharpoonup u\neq0$ in $H^1_0(\Omega)$, up to a subsequence. By the Rellich-Kondrachov theorem, we may also assume that $u_n\to u$ a.e. on $\Omega$. Then, one can apply Proposition \ref{prop:MMM-2q} to the sequence $(u_n)$ and to the operator $A:u\in H^1_0(\Omega)\mapsto u\in L^q(\Omega)$, where the Hilbert space $H^1_0(\Omega)$ is endowed with the Hilbert space norm $\|v\|=(\|\nabla u\|_{L^2}^2-\lambda\|u\|_{L^2}^2)^{1/2}$. We obtain in this way that $u$ is a maximizer for $\cS_\lambda$ and that $u_n\to u$ strongly in $H^1_0(\Omega)$. It thus remains to prove that $\cS=\tilde{\cS}$. First, our construction above of a maximizing sequence for $\cS$ concentrating at a point in $\Omega$ shows that $\cS\le\tilde{\cS}$. Let us show the reverse inequality. To do so, let $(u_n)\subset H^1_0(\Omega)$ be such that $\|\nabla u_n\|_{L^2}^2-\lambda\|u_n\|_{L^2}^2=1$ for all $n$ and such that $u_n\rightharpoonup0$ in $H^1_0(\Omega)$. Again by the Rellich-Kondrachov theorem, we deduce that $u_n\to0$ strongly in $L^2(\Omega)$, so that $\|\nabla u_n\|_{L^2}\to1$ as $n\to\ii$. Defining $v_n\in H^1(\R^d)$ to be the extension of $u_n$ by $0$ outside of $\Omega$, we thus have for all $n$
 $$\frac{\int_\Omega|u_n|^q}{1+o(1)}=\frac{\int_{\R^d}|v_n|^q}{\|\nabla v_n\|_{L^2}^{q/2}}\le\cS,$$
 hence $\limsup_{n\to\ii}\int_\Omega|u_n|^q\le\cS$, so that $\tilde{\cS}\le\cS$. 
\end{proof}

\begin{remark}
 In \cite{BreNir-83}, Br\'ezis and Nirenberg showed that $\cS_\lambda>\cS$ is true for all $\lambda\in(0,\lambda_1(\Omega))$ if $d\ge4$. On the other hand, if $d=3$, they show that for any bounded smooth $\Omega$ (in fact, finite measure is sufficient) there is a $\lambda_*(\Omega)\in(0,\lambda_1(\Omega))$ such that $\cS_\lambda=\cS$ for all $\lambda\le\lambda_*(\Omega)$ and $\cS_\lambda>\cS$ for $\lambda>\lambda_*(\Omega)$.
\end{remark}

\subsection{The Stein-Tomas inequality}

In \cite{FraLieSab-16}, Lieb and his coauthors applied similar ideas to study the existence of maximizers for the Stein-Tomas inequality \cite{Stein-86,Tomas-75},
$$\cS=\sup\left\{\int_{\R^{d+1}}|\check{f}|^q\ :\ f\in L^2(\Sph^d),\ \|f\|_{L^2}=1 \right\}$$
with $d\ge1$ and $q=2+4/d$, and where for any $f\in L^2(\Sph^d)$ we defined 
$$\forall x\in\R^{d+1},\ \check{f}(x)=\int_{\Sph^d}f(\omega)e^{-ix\cdot\omega}\,d\omega.$$
Since the problem is posed on the sphere, the problem is rotation invariant but since the group of rotations is compact, it does not induce any loss of compactness. However, similarly to the Br\'ezis-Nirenberg problem, a potential loss of compactness may arise from functions that concentrate at a point on the sphere. Interestingly, the effective problem that one finds in this case is the Strichartz inequality that we mentioned above, for which we denote the sharp constant by $\cS_{\rm Stri}$. There is a twist compared to the Br\'ezis-Nirenberg problem, though: one can show that it is energetically more favorable to concentrate around \emph{two} points (namely, antipodal points on the sphere) rather than at a single point (with the mass equally split between the two points). One can compute explicitly that such a scenario leads to an effective problem which sharp constant is $a\cS_{\rm Stri}$ for some $a>1$. In \cite{FraLieSab-16}, the authors prove that this scenario is the only possible source of loss of compactness:

\begin{proposition}
 If $\cS>a\cS_{\rm Stri}$, then there exists $f\in L^2(\Sph^d)$ with $\|f\|_{L^2}=1$ such that $\int_{\R^{d+1}}|\check{f}|^q=\cS$.
\end{proposition}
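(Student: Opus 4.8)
The plan is to mimic the concentration-compactness argument used for the Br\'ezis-Nirenberg problem, but now allowing the lost mass to split into \emph{two} pieces concentrating at antipodal points. First I would set up the relevant ``bubbling'' quantity: define
$$\tilde{\cS}:=\sup\left\{\limsup_{n\to\ii}\int_{\R^{d+1}}|\check{f_n}|^q\ :\ (f_n)\subset L^2(\Sph^d),\ \|f_n\|_{L^2}=1,\ f_n\rightharpoonup0\text{ in }L^2(\Sph^d)\right\}.$$
The first half of the argument is then formally identical to the Br\'ezis-Nirenberg case: if $\cS>\tilde{\cS}$, then any maximizing sequence $(f_n)$ does not converge weakly to $0$, so up to a subsequence $f_n\rightharpoonup f\neq0$; one would want to apply Proposition \ref{prop:MMM-2q} with $\cH=L^2(\Sph^d)$ and $A:f\mapsto\check{f}$ restricted to $L^q(\R^{d+1})$, using the boundedness of the extension operator. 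For this one still needs Assumption (5), i.e.\ $\check{f_n}\to\check{f}$ a.e.\ on $\R^{d+1}$; this should follow from a local-compactness statement for the extension operator $f\mapsto\check f$ on the compact manifold $\Sph^d$, analogous to the compactness of $v\mapsto e^{it\Delta}v$ into $L^2_{\rm loc}$ proved above (the Fourier support being a sphere gives local smoothing / local compactness into $L^2_{\rm loc}(\R^{d+1})$). So the problem reduces, exactly as before, to the identity $\tilde{\cS}=a\cS_{\rm Stri}$.

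The genuinely new part is the computation of $\tilde{\cS}$, and this is where I expect the main obstacle. One direction, $\tilde{\cS}\ge a\cS_{\rm Stri}$, is the explicit construction: take two sequences of bumps concentrating at antipodal points $\pm e\in\Sph^d$, each carrying mass $1/2$, parametrized so that after the appropriate parabolic rescaling (reflecting the local paraboloid geometry of the sphere near a point) the extension operator $\check{\cdot}$ converges to the Schr\"odinger propagator $e^{it\Delta}$ acting on the local profile. The factor $a>1$ comes from the fact that the two concentrating pieces are asymptotically non-interacting in $L^q$ but their masses add in a superadditive way under the constraint $\|f\|_{L^2}=1$: because $q>2$, splitting the $L^2$-mass $1$ into two halves and then summing two essentially disjoint $L^q$-contributions beats putting all the mass in one bubble, and one computes the gain to be precisely $a=2\cdot(1/2)^{q/2}\cdot(\text{something})$ --- the explicit value of $a$ is whatever the two-point ansatz yields and I would just quote it from \cite{FraLieSab-16}. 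Single-point concentration gives only $\cS_{\rm Stri}<a\cS_{\rm Stri}$, which is why antipodal pairs dominate.

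The reverse direction, $\tilde{\cS}\le a\cS_{\rm Stri}$, is the heart of the matter and the step I expect to be hardest. Given a normalized sequence $f_n\rightharpoonup0$ with $\int|\check{f_n}|^q\to\tilde{\cS}$, one must show the mass cannot escape in any way more efficient than into at most two antipodal bubbles. I would run a profile-decomposition / bubble-extraction argument on $(f_n)$ adapted to the symmetry group of the Stein-Tomas problem (translations and modulations in $\R^{d+1}$ together with the parabolic rescalings generated by concentration on $\Sph^d$), along the lines of the Strichartz profile decompositions of Keraani and the bilinear-estimate refinements of Tao; each profile concentrates at some point of $\Sph^d$ and, after rescaling, solves a Strichartz-type problem, contributing at most $\cS_{\rm Stri}\|c_j\|^{q}$ with $\sum_j\|c_j\|^2\le1$ (asymptotic orthogonality of masses), \emph{except} that two profiles concentrating at the same point or at antipodal points can interact in the nonlinear term $|\check{\cdot}|^q$; the antipodal interaction is exactly what produces the constant $a$ rather than $1$. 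Summing up the profiles and optimizing the resulting variational inequality $\sum_j$ over mass distributions, using $q>2$ and the superadditivity/subadditivity bookkeeping, should yield $\tilde{\cS}\le a\cS_{\rm Stri}$, with equality forced only by the two-antipodal-bubble configuration. The technical core is proving the orthogonality of the profiles' $L^q$-contributions and correctly identifying the antipodal interaction term --- this is the part where one really needs the refined (bilinear) Stein-Tomas inequality rather than the tools of Section \ref{sec:tools} alone.
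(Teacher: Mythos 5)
There is a genuine gap in your definition of $\tilde{\cS}$. You use ordinary weak convergence $f_n\rightharpoonup 0$ in $L^2(\Sph^d)$, but unlike the Br\'ezis--Nirenberg problem, the Stein--Tomas problem \emph{still has a non-compact invariance}: the modulation $f(\omega)\mapsto e^{-ix\cdot\omega}f(\omega)$ for $x\in\R^{d+1}$ preserves both $\|f\|_{L^2(\Sph^d)}$ and $\|\check{f}\|_{L^q(\R^{d+1})}$ (it translates $\check f$). Moreover, for any fixed $f,g\in L^2(\Sph^d)$ one has $\langle e^{-ix\cdot\omega}f,g\rangle = \widehat{(f\bar g\,d\sigma)}(x)\to 0$ as $|x|\to\ii$ (by density of smooth functions in $L^1(\Sph^d)$ and stationary-phase decay). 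Hence, taking any near-maximizing $g_k$ and modulating by $x$ with $|x|$ large, a diagonal argument produces a sequence $(f_n)$ with $\|f_n\|_{L^2}=1$, $f_n\rightharpoonup 0$, and $\int|\check{f_n}|^q\to\cS$. So with your definition $\tilde{\cS}=\cS$ identically, the hypothesis $\cS>\tilde{\cS}$ can never hold, and the first half of your argument is vacuous.

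The fix, which is what the paper does, is to define $\tilde{\cS}$ using weak convergence to zero \emph{up to modulations}: $f_n\rightharpoonup_{\rm sym}0$ means $e^{-ix_n\cdot\omega}f_n\rightharpoonup 0$ for every choice of $(x_n)\subset\R^{d+1}$. With this definition, $\cS>\tilde{\cS}$ implies that for a maximizing sequence there exist modulations $(x_n)$ such that $(e^{-ix_n\cdot\omega}f_n)$ has a non-zero weak limit; this modulated sequence is still maximizing and one applies Proposition~\ref{prop:MMM-2q} to \emph{it}, not to the original sequence. Your a.e.-convergence step (local smoothing/compactness of the extension operator, as for $e^{it\Delta}$) is the right idea and survives this modification. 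The second half of your proposal -- showing $\tilde{\cS}\le a\cS_{\rm Stri}$ via a refined Stein--Tomas inequality that detects concentration at (pairs of antipodal) points on the sphere -- is in the right spirit and matches the paper's sketch, though you should be aware that the paper emphasizes a single concentration scale detected by the refined inequality rather than a full profile decomposition, and then argues that essentially all the mass must concentrate there by near-maximality.
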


The proof of this result follows the same line of the one above for the Br\'ezis-Nirenberg problem: defining
$$\tilde{\cS}=\sup\left\{\limsup_{n\to\ii}\int_{\R^{d+1}}|\check{f_n}|^q\ :\ (f_n)\subset L^2(\Sph^d),\ \forall n,\, \|f_n\|_{L^2}=1,\ f_n\rightharpoonup_{\rm sym}0\right\},$$
one can show using Proposition \ref{prop:MMM-2q} that if $\cS>\tilde{\cS}$, then maximizing sequences converge strongly to a maximizer for $\cS$ since by definition of $\tilde{\cS}$, they admit non-zero weak limits. Here, the notation $f_n\rightharpoonup_{\rm sym}0$ refers to weak convergence to zero ``up to symmetry'', because this problem still has a non-compact invariance (the transformation $f(\omega)\to e^{-i x\cdot\omega}f(\omega)$ for any $x\in\R^{d+1}$). Hence, $f_n\rightharpoonup_{\rm sym}0$ means that for all $(x_n)\subset\R^{d+1}$, $e^{-ix_n\cdot\omega}f_n\rightharpoonup0$ in $L^2(\Sph^d)$. If $\cS>\tilde{\cS}$, we thus deduce that maximizing sequences $(f_n)$ for $\cS$ are such that there exists $(x_n)\subset\R^{d+1}$ such that $(e^{-ix_n\cdot\omega}f_n)$ has a non-zero weak limit in $L^2(\Sph^d)$ (and it is still a maximizing sequence to which we can apply Proposition \ref{prop:MMM-2q}). In the end, everything boils down to proving that $\tilde{\cS}=a\cS_{\rm Stri}$. Notice again that due to our explicit examples of functions that concentrate at two antipodal points, we always have $\tilde{\cS}\ge a\cS_{\rm Stri}$ and one has to prove the reverse inequality. It follows from a refined version of the Stein-Tomas inequality (in the spirit of the refined inequalities that we mentioned above) that for a sequence $(f_n)$ which is an almost maximizer for $\tilde{\cS}$ (so that $\check{f_n}$ does not converge strongly to $0$ in $L^q(\R^{d+1})$), one can find a sequence of scaling parameters such that some positive mass of $(f_n)$ concentrates around antipodal points at this scale. Using a version of Proposition \ref{prop:MMM-2q} adapted to this setting, one can show that actually all the mass of $(f_n)$ concentrates around these points (for otherwise it would violate its almost maximality), so that it becomes a competitor for the maximization problem $a\cS_{\rm Stri}$, leading to $\tilde{\cS}\le a\cS_{\rm Stri}$.

\begin{remark}
 It is known that $\cS>a\cS_{\rm Stri}$ holds for $d=1$ \cite{Shao-15} and $d=2$ \cite{ChrSha-12a}, but it is a conjecture for $d\ge3$. In \cite{FraLieSab-16}, it is shown that this conjecture would be a consequence of the conjectured fact that Gaussians are maximizers for the Strichartz problem $\cS_{\rm Stri}$. Let us also mention that a similar strategy was applied to the case where the sphere is replaced by the one-dimensional cubic curve $y=x^3$ in \cite{FraSab-18}.
\end{remark}

Finally, let us mention that the fact that a strict inequality between the supremum and an 'effective' supremum obtained using the almost invariances of the problem implies the existence of optimizers appeared in many places in the literature; and perhaps one of the earliest example of such a phenomenon is the HVZ theorem in many-body quantum mechanics going back to the 1960s\footnote{We thank Mathieu Lewin for pointing this out to us.} (see for instance \cite[Theorem 3.1]{Lewin-11} for a version reflecting these compactness ideas; Zhislin's original proof \cite{Zhislin-60} being very close in spirit, as manifested by \cite[Eq. (2.14)]{Zhislin-69} which shows that sequences of trial functions vanishing weakly must have at least the energy of $N-1$ particles), where it is proved that an atom can bind $N$ electrons if and only if the $N$-body ground state is strictly less that the $(N-1)$-body ground state (in other words, it is energetically unfavorable to send one of the electrons to infinity). In physics, it is also standard that this kind of inequalities imply the stability of the system; as demonstrated for instance in the work of Bethe \cite{Bethe-29}, where the fact that adding an electron to the hydrogen atom is proven to be energetically favorable\footnote{We thank Rupert Frank for mentioning this reference as well as Zhislin's one.}. There, the fact that it implies the stability of the negative hydrogen ion (that is, the existence of a ground state) is implicit and not even mentioned. A famous problem where this phenomenon also appears is the Yamabe problem in Riemannian geometry, where optimizers exist as soon as concentration around a point is energetically unfavorable, as understood by Aubin \cite{Aubin-76}. Finally, this is also the content of the \emph{strict binding inequalities} introduced by Lions in his concentration-compactness theory, which are shown to be necessary and sufficient conditions for the precompactness of optimizing sequences and which were used by Lions and others to study various types of optimization problems.

\appendix

\section{Profile decompositions}\label{app:profile}

We explain why the techniques presented in this review naturally lead to a structure theorem for bounded sequences in some function spaces which is called \emph{profile decomposition}, useful in many analysis problems. For general sequences, this was first introduced by G\'erard \cite{Gerard-98} but they appear in many works that we will not try to list here. We state it in the context of $H^s(\R^d)$ (which was proved before for instance in \cite[Lem. 11]{LenLew-11}) but as the proof will show, its basic ingredients are the same as the ones we used to prove the existence of optimizers: namely, a refined inequality  to detect at which scale some positive mass lives, a compactness tool ensuring a.e. convergence, and the Br\'ezis-Lieb lemma. Hence, a similar profile decomposition can be inferred for problems where such ingredients are present. That these tools could be used to obtain a profile decomposition was understood quite early by Nawa \cite{Nawa-90,Nawa-94} in the context of the nonlinear Schr\"odinger equation.

\begin{proposition}\label{prop:profile}
 Let $d\ge1$, $s\in(0,d/2)$. Let $(u_n)$ be a bounded sequence in $H^s(\R^d)$. Then, there exists a subsequence of $(u_n)$ (that we still denote by $(u_n)$), there exist $(v^j)_{j\ge1}\subset H^s(\R^d)$ and $(x^j_n)_{j\ge1}\subset\R^d$ such that defining for all $J\ge1$ and $n\in\N$, $$r_n^J:=u_n-\sum_{j=1}^J v^j(\cdot+x^j_n),$$
 we have
 \begin{enumerate}
  \item $\forall J\ge1$, 
  $$\|u_n\|_{H^s}^2=\sum_{j=1}^J\|v^j\|_{H^s}^2 + \|r_n^J\|_{H^s}^2+o_{n\to\ii}(1).$$
  \item $\forall q\in(2,2d/(d-2s))$, $\forall J\ge1$, 
  $$\|u_n\|_{L^q}^q=\sum_{j=1}^J\|v^j\|_{L^q}^q + \|r_n^J\|_{L^q}^q+o_{n\to\ii}(1).$$
  \item $\forall j\neq j'$, $\lim_{n\to\ii}|x_n^j-x_n^{j'}|=+\ii$;
  \item $\forall q\in(2,2d/(d-2s))$, $\lim_{J\to\ii}\limsup_{n\to\ii}\|r^J_n\|_{L^q}=0$.
 \end{enumerate}
\end{proposition}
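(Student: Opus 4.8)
The plan is to build the profiles $(v^j)$ and cores $(x_n^j)$ by an iterative extraction procedure, modelled exactly on the proof of Proposition \ref{prop:exist-GNS}: at each stage a refined inequality locates a fixed amount of $L^q$-mass at some scale and position, Rellich--Kondrachov upgrades the resulting weak convergence to a.e. convergence, and the Br\'ezis--Lieb lemma (Lemma \ref{lem:bre-lie}) does the $L^q$-bookkeeping. I would fix once and for all an auxiliary exponent $q_0\in(2,2d/(d-2s))$, the corresponding $s'_0\in(0,s)$ and a cutoff $\chi$ as in Proposition \ref{prop:refined-GNS}; since $(u_n)$ is bounded in $H^s(\R^d)$ it is bounded in every $L^q$, $q\in[2,2d/(d-2s))$, by Sobolev's embedding, so by interpolation it will be enough to control remainders in $L^{q_0}$ and in $H^s$.

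First I would set $r_n^0:=u_n$ and run the induction: given $(r_n^{J-1})$ bounded in $H^s$, if $\limsup_n\|r_n^{J-1}\|_{L^{q_0}}=0$ the process stops; otherwise, passing to a subsequence, Proposition \ref{prop:refined-GNS} together with the argument in the proof of Proposition \ref{prop:exist-GNS} (confining the relevant dilation parameters to a compact interval $[t_-,t_+]$ and extracting a convergent subsequence of them) yields $(x_n^J)\subset\R^d$ such that $r_n^{J-1}(\cdot-x_n^J)$ has, along a subsequence, a nonzero weak limit $v^J$ in $H^s(\R^d)$; moreover, testing the detected bump against the fixed Schwartz function $g=\cF^{-1}(\chi(|\cdot|^2))\in H^{-s}$ produces the quantitative bound $\|v^J\|_{H^s}\ge c\,(\limsup_n\|r_n^{J-1}\|_{L^{q_0}})^{1/\theta}$, with $c>0$ and $\theta\in(0,1)$ independent of $J$ (here one uses $\|r_n^{J-1}\|_{H^s}\le 2\sup_m\|u_m\|_{H^s}$, valid once item (1) is known for $J-1$). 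By Rellich--Kondrachov I may further assume $r_n^{J-1}(\cdot-x_n^J)\to v^J$ a.e., and I set $r_n^J:=r_n^{J-1}-v^J(\cdot+x_n^J)$, so that $r_n^J(\cdot-x_n^J)\rightharpoonup0$ and, applying Lemma \ref{lem:bre-lie} with $r=q$ to $r_n^{J-1}(\cdot-x_n^J)$ and using translation invariance of the $L^q$ norm, $\|r_n^{J-1}\|_{L^q}^q=\|v^J\|_{L^q}^q+\|r_n^J\|_{L^q}^q+o_{n\to\ii}(1)$ for every subcritical $q$. A diagonal extraction over $J$ gives a single subsequence (still denoted $(u_n)$) along which all of this holds simultaneously. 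Telescoping the Br\'ezis--Lieb identities over $j=1,\dots,J$ yields item (2).

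Next I would prove the asymptotic orthogonality of the cores (item (3)) by induction on the number of extracted profiles. Assuming $(x_n^1),\dots,(x_n^{J-1})$ are pairwise orthogonal, one first checks $r_n^{J-1}(\cdot-x_n^{j'})\rightharpoonup0$ for each $j'\le J-1$, by writing $r_n^{J-1}(\cdot-x_n^{j'})=r_n^{j'}(\cdot-x_n^{j'})-\sum_{l=j'+1}^{J-1}v^l(\cdot+x_n^l-x_n^{j'})$ and noting that the first term vanishes weakly by construction of $v^{j'}$ and each remaining term by the induction hypothesis. If $|x_n^J-x_n^{j'}|$ stayed bounded along a subsequence for some $j'<J$, then $r_n^{J-1}(\cdot-x_n^J)$ would be a bounded shift of $r_n^{J-1}(\cdot-x_n^{j'})$, forcing $v^J=0$, a contradiction; hence $|x_n^J-x_n^{j'}|\to\ii$, and the same computation gives $r_n^J(\cdot-x_n^j)\rightharpoonup0$ for all $j\le J$. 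Granting orthogonality, item (1) follows by expanding $\|u_n\|_{H^s}^2=\|\sum_{j\le J}v^j(\cdot+x_n^j)+r_n^J\|_{H^s}^2$: the cross terms $\langle v^j(\cdot+x_n^j),v^{j'}(\cdot+x_n^{j'})\rangle_{H^s}$ and $\langle v^j(\cdot+x_n^j),r_n^J\rangle_{H^s}$ tend to $0$ because $v^{j'}(\cdot+x_n^{j'}-x_n^j)\rightharpoonup0$ and $r_n^J(\cdot-x_n^j)\rightharpoonup0$ while $v^j$ is a fixed element of $H^s$. In particular $\sum_{j\ge1}\|v^j\|_{H^s}^2\le\limsup_n\|u_n\|_{H^s}^2<\ii$, so $\|v^j\|_{H^s}\to0$ as $j\to\ii$.

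The step I expect to be the real crux is item (4), the $L^q$-smallness of the remainder as $J\to\ii$ — this is the substitute for an honest stopping time and simultaneously shows the construction exhausts all subcritical mass. Here the $\ell^2$-summability $\sum_j\|v^j\|_{H^s}^2<\ii$ meets the extraction bound, which at stage $J+1$ reads $\|v^{J+1}\|_{H^s}\ge c\,(\limsup_n\|r_n^J\|_{L^{q_0}})^{1/\theta}$: since $\|v^{J+1}\|_{H^s}\to0$, we get $\limsup_n\|r_n^J\|_{L^{q_0}}\to0$ as $J\to\ii$ (if the extraction stopped after finitely many steps this holds trivially for the last $J$). Interpolating between $L^2$ and a subcritical $L^{q_1}$, both bounded uniformly in $n$ and $J$, upgrades this to $\lim_{J\to\ii}\limsup_n\|r_n^J\|_{L^q}=0$ for every $q\in(2,2d/(d-2s))$, which is item (4). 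The only delicate point throughout is to keep the constants $c,\theta$ in the refined inequality and the interpolation constants genuinely independent of $J$; this holds because $q_0$, $\chi$ and the $H^s$-bound on $(u_n)$ are all fixed at the outset.
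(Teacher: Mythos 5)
Your proof is correct and follows essentially the same route as the paper: iterated application of the refined-inequality dichotomy (Proposition \ref{prop:dicho-GNS}) to extract profiles, Rellich--Kondrachov to upgrade to a.e.\ convergence, the Br\'ezis--Lieb lemma for the $L^q$ bookkeeping, and the $\ell^2$-summability of $\|v^j\|_{H^s}$ clashing with the uniform lower bound $\|v^{J+1}\|_{H^s}\ge c\,(\limsup_n\|r_n^J\|_{L^{q_0}})^{1/\theta}$ to force exhaustion. You are somewhat more explicit than the paper in two places, both to your credit: you spell out the induction behind the core orthogonality (3) and the fact that $r_n^J(\cdot-x_n^j)\rightharpoonup 0$ for all $j\le J$, which the paper treats for $j=1,j'=2$ and then leaves as ``iterate''; and you address why (4) holds for all subcritical $q$ and not just the extraction exponent $q_0$, via interpolation, a point the paper passes over silently. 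One small imprecision: to cover $q\in(q_0,2d/(d-2s))$ you should interpolate $L^q$ between $L^{q_0}$ and a fixed $L^{q_1}$ with $q_0<q<q_1<2d/(d-2s)$ (the $L^{q_1}$ norm being uniformly bounded by Sobolev), not between $L^2$ and $L^{q_1}$; for $q\in(2,q_0)$ interpolating against $L^2$ is the right thing. With that adjustment the argument is complete.
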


Before proving this proposition, let us recall a consequence of the refined inequality of Proposition \ref{prop:refined-GNS} that we used implicitly in our proof of Proposition \ref{prop:exist-GNS}.

\begin{proposition}\label{prop:dicho-GNS}
 Let $d\ge1$, $s\in(0,d/2)$ and $q\in(2,2d/(d-2s))$. Then, there exist $c>0$ and $\theta\in(0,1)$ such that for any bounded sequence  $(u_n)\subset H^s(\R^d)$ we have 
 \begin{enumerate}
  \item either $\lim_{n\to\ii}\|u_n\|_{L^q}=0$;
  \item or $a:=\limsup_{n\to\ii}\|u_n\|_{L^q}>0$ and there exist a subsequence of $(u_n)$ (that we still denote by $(u_n)$), there exist $v\in H^s(\R^d)\setminus\{0\}$ and $(x_n)\subset\R^d$ such that $u_n(\cdot-x_n)\rightharpoonup v$ in $H^s(\R^d)$ and such that $\|v\|_{H^s}\ge c a^{1/\theta}b^{1-1/\theta}$, where $b:=\liminf_{n\to\ii}\|u_n\|_{H^s}$.
 \end{enumerate}
\end{proposition}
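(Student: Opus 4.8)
The plan is to make quantitative the argument already used in the proof of Proposition~\ref{prop:exist-GNS}, keeping careful track of all constants so that the final bound involves only $a$ and $b$. If $\|u_n\|_{L^q}\to 0$ we are in case~(1), so assume $\limsup_n\|u_n\|_{L^q}>0$. First I would pass to a subsequence along which both $\|u_n\|_{L^q}$ and $\|u_n\|_{H^s}$ converge, the former to its original limsup $a>0$; by the subcritical Sobolev embedding $\|u_n\|_{L^q}\le C_{\mathrm{Sob}}\|u_n\|_{H^s}$ (valid since $q<2d/(d-2s)$) the latter limit $b$ satisfies $b\ge a/C_{\mathrm{Sob}}>0$, so $b^{1-1/\theta}<+\ii$ and, for the extracted sequence, $a=\limsup_n\|u_n\|_{L^q}$ and $b=\liminf_n\|u_n\|_{H^s}$.

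Next, applying the refined inequality of Proposition~\ref{prop:refined-GNS} with its exponent $\theta$ gives $\sup_{t>0}t^{(d-2s')/4}\|\chi(-t\Delta)u_n\|_{L^\ii}\ge C^{-1/\theta}\|u_n\|_{L^q}^{1/\theta}\|u_n\|_{H^s}^{1-1/\theta}\to C^{-1/\theta}a^{1/\theta}b^{1-1/\theta}=:c_0>0$, so for $n$ large there are $t_n>0$ and $x_n\in\R^d$ with $t_n^{(d-2s')/4}|(\chi(-t_n\Delta)u_n)(x_n)|\ge c_0/4$. As in the proof of Proposition~\ref{prop:exist-GNS}, the elementary bounds $t^{(d-2s')/4}\|\chi(-t\Delta)u_n\|_{L^\ii}\le C'\min\{t^{-s'/2}\|u_n\|_{L^2},\,t^{(s-s')/2}\|u_n\|_{\dot{H}^s}\}$, together with the $H^s$-boundedness of $(u_n)$ and the subcriticality $s'<s$, confine $t_n$ to a compact subinterval of $(0,+\ii)$; after one more extraction $t_n\to t_*\in(0,+\ii)$. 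I would then rewrite $(\chi(-t_n\Delta)u_n)(x_n)=\langle u_n(\cdot+x_n),\ell_{t_n}\rangle_{L^2}$, where $\ell_t(z):=t^{-d/2}g(t^{-1/2}z)$ and $g:=(2\pi)^{-d/2}\cF^{-1}(\chi(|\cdot|^2))$ is a fixed even real Schwartz function; since $t_n\to t_*$ one has $\ell_{t_n}\to\ell_{t_*}$ strongly in $L^2(\R^d)$ and, crucially, also in $\dot{H}^{-s}(\R^d)$ — the latter because $g\in\dot{H}^{-s}$, which holds precisely since $s<d/2$. Extracting once more so that $u_n(\cdot+x_n)\rightharpoonup v$ in $H^s(\R^d)$, the strong convergence of $\ell_{t_n}$ lets one pass to the limit in the pairing, giving $t_*^{(d-2s')/4}|\langle v,\ell_{t_*}\rangle_{L^2}|\ge c_0/4$, hence $v\neq 0$; replacing $x_n$ by $-x_n$ produces the translated sequence of the statement.

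The delicate point — and, I expect, the only step requiring real care — is that the constant $c$ must be universal, whereas the naive estimate $\|v\|_{H^s}\ge\|v\|_{L^2}$ only yields a bound with a factor $t_*^{s'/2}$, and the interval confining $t_n$ (hence $t_*$) depends on the $H^s$-bound of $(u_n)$. I would cure this by bounding $v$ from below in \emph{two} norms at once: from the Cauchy--Schwarz inequality in $L^2$, $\|v\|_{L^2}\ge t_*^{s'/2}A$ with $A:=c_0/(4\|g\|_{L^2})$; and from the $\dot{H}^s$--$\dot{H}^{-s}$ duality (again using $s<d/2$ so that $g\in\dot{H}^{-s}$), $\|v\|_{\dot{H}^s}\ge t_*^{(s'-s)/2}B$ with $B:=c_0/(4\|g\|_{\dot{H}^{-s}})$. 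Therefore $\|v\|_{H^s}^2\gtrsim\|v\|_{L^2}^2+\|v\|_{\dot{H}^s}^2\ge t_*^{s'}A^2+t_*^{s'-s}B^2\ge\inf_{t>0}(t^{s'}A^2+t^{s'-s}B^2)$, and this last infimum equals a constant (depending only on $s,s'$) times $A^{2(s-s')/s}B^{2s'/s}$ by the weighted arithmetic--geometric mean inequality with weights $\tfrac{s-s'}{s},\tfrac{s'}{s}$, for which the powers of $t$ cancel. Since $A^{2(s-s')/s}B^{2s'/s}$ is a fixed multiple of $c_0^2$ and $c_0=C^{-1/\theta}a^{1/\theta}b^{1-1/\theta}$, this yields $\|v\|_{H^s}\ge c\,a^{1/\theta}b^{1-1/\theta}$ with $c>0$ depending only on $d$, $s$, $q$ and the cutoff $\chi$. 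Apart from this scale-optimization (and the verification that $g\in L^2\cap\dot{H}^{-s}$), every step is a direct transcription of the proof of Proposition~\ref{prop:exist-GNS}.
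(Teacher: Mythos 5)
Your proof is correct and follows the route the paper merely sketches (the paper dispatches this proposition with a single sentence invoking Cauchy--Schwarz). The real content of your write-up is exactly the part the paper glosses over: a naive Cauchy--Schwarz in $L^2$ leaves a scale factor $t_*^{s'/2}$ with $t_*$ confined to an interval that depends on the $H^s$-bound of the sequence, so it does not directly yield a universal constant. Your two-norm argument -- lower bound $\|v\|_{L^2}\ge t_*^{s'/2}A$ from the $L^2$ pairing, lower bound $\|v\|_{\dot H^s}\ge t_*^{(s'-s)/2}B$ from the $\dot H^{-s}$--$\dot H^s$ duality, followed by minimizing $t^{s'}A^2+t^{s'-s}B^2$ over $t>0$ (finite because $0<s'<s$) -- is precisely what makes the constant depend only on $d,s,q,\chi$. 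An equivalent formulation, slightly closer to the paper's phrasing, is to note that $t^{(d-2s')/4}\|\ell_t\|_{H^{-s}}\le\min\!\big(t^{-s'/2}\|g\|_{L^2},\,t^{(s-s')/2}\|g\|_{\dot H^{-s}}\big)$ is bounded uniformly in $t>0$, after which a single Cauchy--Schwarz $|\langle \ell_{t_*},v\rangle|\le\|\ell_{t_*}\|_{H^{-s}}\|v\|_{H^s}$ suffices; your optimization realizes the same cancellation.

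One small inaccuracy: you attribute $g\in\dot H^{-s}$ to the condition $s<d/2$. That is not the reason here. Since $\chi\in C^\ii_0(0,+\ii)$, the Fourier transform $\hat g=(2\pi)^{-d/2}\chi(|\cdot|^2)$ vanishes in a neighbourhood of $\xi=0$ and is compactly supported, so $\int|\xi|^{-2s}|\hat g(\xi)|^2\,d\xi<\ii$ for every $s\ge0$ without any restriction on $s$ relative to $d$. The bound $s<d/2$ would be needed only if $\hat g$ did not vanish near the origin. This does not affect the validity of the argument, but the justification should be corrected.
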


What we did not emphasize in the proof of Proposition \ref{prop:exist-GNS} was the lower bound on the norm of $v$; but recall that, using the refined inequality, the proof provides a $g\in H^s$ and a $c'>0$ such that $|\langle g,v\rangle|\ge c'a^{1/\theta}b^{1-1/\theta}$, so that the lower bound on $\|v\|_{H^s}$ just follows from the Cauchy-Schwarz inequality.

\begin{proof}[Proof of Proposition \ref{prop:profile}]
 Define $b:=\liminf_{n\to\ii}\|u_n\|_{H^s}$ and let us fix any $q\in(2,2d/(d-2s)$. If $\lim_{n\to\ii}\|u_n\|_{L^q}=0$, then we define $v^j=0$ for all $j$ and pick any $(x^j_n)\subset\R^d$ satisfying (3) to get the result. If $a_0:=\limsup_{n\to\ii}\|u_n\|_{L^q}>0$, we apply Proposition \ref{prop:dicho-GNS} to infer that there exist $v^1\in H^s(\R^d)\setminus\{0\}$ with $\|v^1\|_{H^s}\ge ca_0^{1/\theta}b^{1-1/\theta}$ and $(x^1_n)\subset\R^d$ such that $u_n(\cdot-x^1_n)\rightharpoonup v^1$ in $H^s(\R^d)$. Recall that we then define $r^1_n:=u_n-v^1(\cdot+x^1_n)$ so that $r^1_n(\cdot-x^1_n)\rightharpoonup0$ and thus we have 
 $$\|u_n\|_{H^s}^2=\|u_n(\cdot-x^1_n)\|_{H^s}^2=\|v^1+r^1_n(\cdot-x^1_n)\|_{H^s}^2=\|v^1\|_{H^s}^2+\|r^1_n\|_{H^s}^2+o_{n\to\ii}(1),$$
 which is (1) for $J=1$. To obtain (2) for $J=1$, one can use that $r^1_n(\cdot-x^1_n)\rightharpoonup0$ together with the boundedness of $(r^1_n)$ in $H^s$ and the Rellich-Kondrachov theorem to infer that, up to a subsequence, $r^1_n(\cdot-x^1_n)\to0$ a.e. on $\R^d$. Then, by the Br\'ezis-Lieb lemma we obtain for any $r\in(2,2d/(d-2s)$,
 $$\|u_n\|_{L^r}^r=\|u_n(\cdot-x^1_n)\|_{L^r}^r=\|v^1+r^1_n(\cdot-x^1_n)\|_{L^r}^r=\|v^1\|_{L^r}^r+\|r^1_n\|_{L^r}^r+o_{n\to\ii}(1),$$
 which is (2) for $J=1$. If $\lim_{n\to\ii}\|r^1_n\|_{L^q}=0$, then we define $v^j=0$ for all $j\ge2$ and pick any $(x^j_n)_{j\ge2}$ so that (3) is satisfied, and then we get the result. If $a_1:=\limsup_{n\to\ii}\|r^1_n\|_{L^q}>0$ we continue by applying the previous procedure to $(r^1_n)$ instead of $(u_n)$. By the property (1) for $J=1$, we have $\liminf_{n\to\ii}\|r^1_n\|_{H^s}\le\liminf_{n\to\ii}\|u_n\|_{H^s}=b$ so that by Proposition \ref{prop:dicho-GNS}, there exist $v^2\in H^s(\R^d)\setminus\{0\}$ with $\|v^2\|_{H^s}\ge ca_1^{1/\theta}b^{1-1/\theta}$ and $(x^2_n)\subset\R^d$ such that $r_n^1(\cdot-x^2_n)\rightharpoonup v^2$ in $H^s(\R^d)$. With $r^2_n=r^1_n-v^2(\cdot+x^2_n)$, we thus have as previously that 
 $$\|r^1_n\|_{H^s}^2=\|v^2\|_{H^s}^2+\|r^2_n\|_{H^s}^2+o_{n\to\ii}(1),$$
 $$\|r^1_n\|_{L^r}^r=\|v^2\|_{L^r}^r+\|r^2_n\|_{L^r}^r+o_{n\to\ii}(1),\ r\in(2,2d/(d-2s))$$
 which when inserted in (1), (2) for $J=1$ give (1), (2) for $J=2$. Let us now check that $|x^1_n-x^2_n|\to\ii$. Assume by contradiction that some subsequence of $(x^1_n-x^2_n)$ is bounded; then up to subsequence we may assume that $y_n:=x^1_n-x^2_n\to y_\ii\in\R^d$. Recall that, by construction, $r^1_n(\cdot-x^2_n)\rightharpoonup v^2$ so that $r^1_n(\cdot-x^1_n)=r^1_n(\cdot-x^2_n-y_n)\rightharpoonup v^2(\cdot-y_\ii)$. This is a contradiction with $v^2\neq0$ and $r^1_n(\cdot-x^1_n)\rightharpoonup0$. We thus obtain (3) for $j=1$ and $j'=2$. We can iterate this construction by again distinguishing between $\lim_{n\to\ii}\|r^2_n\|_{L^q}=0$ and $a_2:=\limsup_{n\to\ii}\|r^2_n\|_{L^q}>0$ to build all the profiles $(v^j)$ and the centers $(x^j_n)$. It remains to check (4), which is non-trivial only in the case where all the profiles $(v^j)$ are non-zero (that is, $a_J>0$ for all $J$). Assume by contradiction that (4) fails in this case, that is there exist $\epsilon>0$ and a sequence $(J_k)$ such that $a_{J_k}\ge\epsilon$ for all $k$. We deduce that for all $k$, $\|v_{J_k}\|_{H^s}\ge c\epsilon^{1/\theta}b^{1-1/\theta}>0$ (where we recall that $c,\epsilon,b$ are independent of $k$), a contradiction with $\sum_{j=1}^\ii \|v^j\|_{H^s}^2\le b^2$ which is a consequence of (1).  
\end{proof}


\end{document}